\newtheorem{theorem}{Theorem}
\newtheorem{definition}[theorem]{Definition}
\newtheorem{lemma}[theorem]{Lemma}
\newtheorem{corollary}[theorem]{Corollary}
\newtheorem{remark}[theorem]{Remark}
\newtheorem{example}{Example}
\DeclareMathOperator{\dt}{\mathrm{d}t}
\DeclareMathOperator{\timeint}{\int \limits_0^\infty}
\DeclareMathOperator{\R}{\mathbb{R}}
\begin{document}
\title{Boundary elements with mesh refinements for the wave equation }
\author{ Heiko Gimperlein\thanks{Maxwell Institute for Mathematical Sciences and Department of Mathematics, Heriot--Watt University, Edinburgh, EH14 4AS, United Kingdom, email: h.gimperlein@hw.ac.uk.}  \thanks{Institute for Mathematics, University of Paderborn, Warburger Str.~100, 33098 Paderborn, Germany.} \and Fabian Meyer\thanks{Institute of Applied Analysis and Numerical Simulation, Universit\"{a}t Stuttgart, Pfaffenwaldring 57, 70569 Stuttgart, Germany.} \and Ceyhun \"{O}zdemir\thanks{Institute of Applied Mathematics, Leibniz University Hannover, 30167 Hannover, Germany. \newline H.~G.~acknowledges support by ERC Advanced Grant HARG 268105 and the EPSRC Impact Acceleration Account. C.~\"{O}.~is supported by a scholarship of the Avicenna Foundation.}\and  David Stark${}^\ast$ \and Ernst P.~Stephan${}^\S$}

\providecommand{\keywords}[1]{{\textit{Key words:}} #1}

\maketitle \vskip 0.5cm
\begin{abstract}
\noindent The solution of the wave equation in a polyhedral domain in $\mathbb{R}^3$ admits an asymptotic singular expansion in a neighborhood of the corners and edges. In this article we formulate boundary and screen problems for the wave equation as equivalent boundary  integral equations in time domain, study the regularity properties of their solutions and the numerical approximation. Guided by the theory for elliptic equations, graded meshes are shown to recover the optimal approximation rates known for smooth solutions. Numerical experiments illustrate the theory for screen problems. In particular, we discuss the Dirichlet and Neumann problems, as well as the Dirichlet-to-Neumann operator and applications to the sound emission of tires.  \end{abstract}
\keywords{boundary element method; screen problems; singular expansion; graded meshes; wave equation.}

\section{Introduction}
\label{intro}

For solutions to elliptic or parabolic equations in a polyhedral domain, the asymptotic behavior near the edges and corners has been studied for several decades \cite{mazya}. Numerically, the explicit singular expansions allow to recover optimal convergence rates for finite \cite{Babuska, Babuska2} and boundary element methods \cite{disspetersdorff, petersdorff}.

In the case of the wave equation in domains with conical or wedge singularities, a similar asymptotic behavior has been obtained by Plamenevskii and collaborators since the late 1990's \cite{kokotov, kokotov2, matyu, plamenevskii}. Their results imply that
at a fixed time $t$, the solution to the wave equation admits an explicit singular expansion with the same exponents as for elliptic equations.   Recently, M\"{u}ller and Schwab have used these results to obtain optimal convergence rates for a finite element method in polygonal domains in $\mathbb{R}^2$ \cite{mueller}.

The realistic scattering and diffraction of waves in $\mathbb{R}^3$ is crucially affected by geometric singularities of the scatterer, with significant new challenges for both the singular and numerical analysis. This article studies the solution of the wave equation in the most singular case, outside a screen $\Gamma$ in $\mathbb{R}^3$ or, equivalently, for an opening crack. From the singular expansion we obtain optimal convergence rates for piecewise polynomial approximations on graded meshes. Numerical experiments using a time domain boundary element method confirm the theoretical predictions and show their use for a real-world application in traffic noise.

To be specific, for a polyhedral screen $\Gamma \subset \mathbb{R}^3$ with connected complement $\Omega = \mathbb{R}^3\setminus \overline{\Gamma}$ {this article} considers  the wave  equation
\begin{subequations} \label{eq:oriProblem}
\begin{alignat}{2}
c^{-2}\partial_t^2 u(t,x) -\Delta u(t,x)&=0 &\quad &\text{in }\mathbb{R}^+_t \times \Omega_x \\
Bu&=g &\quad &\text{on }  \Gamma = \partial \Omega\\
{u}(0,x)=\partial_t u(0,x)&=0 &\quad &\text{in }  \Omega  
\end{alignat}
\end{subequations}
where either inhomogeneous Dirichlet boundary conditions $Bu = u|_\Gamma$ or Neumann boundary conditions $Bu = \partial_\nu u|_\Gamma$ are considered on $\Gamma$. Here, $c$ denotes the speed of sound and for simplicity, in most of the article we choose units such that $c=1$.\\

Based on the above-mentioned results of Plamenevskii {and coauthors}, we obtain a precise description of the singularities of the solution near edges and corners. The solution $u$ and its normal derivative on $\Gamma$ admit an asymptotic expansion with the same singular exponents as in the elliptic case.

As in the elliptic case,  the precise asymptotic description of the solution has implications for the approximation by time domain boundary elements. We formulate \eqref{eq:oriProblem} as a time dependent integral equation on $\Gamma$, with either the single layer, the hypersingular or the Dirichlet-to-Neumann operator. {The Dirichlet trace $u|_\Gamma$ is approximated by tensor products of piecewise polynomial functions $\widetilde{V}^{p,q}_{\Delta t,h}$ on a $\beta$-graded mesh in space and a uniform mesh in time of step size $\Delta t$. $\tilde{V}^{p,q}_{\Delta t,h}$ is defined in \eqref{fespace2}, and its analogue $V^{p,q}_{\Delta t,h}$ for the approximation of the Neumann trace $\partial_\nu u|_\Gamma$ in \eqref{fespace}. See the bottom of page 8 for the definition of the $\beta$-graded meshes.} Our main result for the approximation of the solutions to the boundary integral equations {in space-time anisotropic Sobolev spaces (Definition \ref{sobdef})} is a consequence of:\\

\noindent \textbf{Theorem A.} {\textit Let $\varepsilon>0$.}\\
\textit{ a) Let $u$ be a strong solution to the homogeneous wave equation with inhomogeneous Neumann boundary conditions $\partial_\nu u|_\Gamma = g$, with $g$ smooth. Further, let $\phi_{h,\Delta t}^\beta$ be the best approximation {in the norm of ${H}^{r}_\sigma(\R^+, \widetilde{H}^{\frac{1}{2}-s}(\Gamma))$ to the Dirichlet trace $u|_\Gamma$ in $\widetilde{V}^{p,1}_{\Delta t,h}$} on a $\beta$-graded spatial mesh with $\Delta t \lesssim h^\beta$. Then $\|u-\phi_{h, \Delta t}^\beta\|_{r,\frac{1}{2}-s, \Gamma, \ast} \leq C_{\beta,\varepsilon} h^{\min\{\beta(\frac{1}{2}+s), \frac{3}{2}+s\}{-\varepsilon}}$, where $s \in [0,\frac{1}{2}]$ and {$r \in [0,p)$}. }

\textit{ b) Let $u$ be a strong solution to the homogeneous wave equation with inhomogeneous Dirichlet boundary conditions $u|_\Gamma = g$, with $g$ smooth. Further, let $\psi_{h,\Delta t}^\beta$ be the best approximation  {in the norm of ${H}^{r}_\sigma(\R^+, \widetilde{H}^{-\frac{1}{2}}(\Gamma))$ to the Neumann trace $\partial_\nu u|_\Gamma$ in ${V}^{p,0}_{\Delta t,h}$} on a $\beta$-graded spatial mesh  with $\Delta t \lesssim h^\beta$. Then $\|\partial_\nu u-\psi_{h, \Delta t}^\beta\|_{r,-\frac{1}{2}, \Gamma, \ast} \leq C_{\beta,\varepsilon} h^{\min\{\frac{\beta}{2}, \frac{3}{2}\}{-\varepsilon}}$, where {$r \in [0,p+1)$}.  }\\

For the circular screen this result may be found in Theorem \ref{approxtheorem1}, while for the polygonal screen it is Theorem \ref{approxtheorem2} (assuming $\beta$ is sufficiently large). It implies an approximation result for the solution to the boundary integral formulations, see Corollary \ref{approxcor1} for the circular screen, respectively Corollary \ref{approxcor2} for the polygonal screen:\\

\noindent \textbf{Corollary B.} {\textit{Let $\varepsilon>0$.}\\
\textit{a) Let $\phi$ be the solution to the hypersingular integral equation $W \phi = g$ and  $\phi_{h,\Delta t}^\beta$ the best approximation {in the norm of ${H}^{r}_\sigma(\R^+, \widetilde{H}^{\frac{1}{2}-s}(\Gamma))$ to $\phi$ in $\widetilde{V}^{p,1}_{\Delta t,h}$} on a $\beta$-graded spatial mesh  with $\Delta t \lesssim h^\beta$. Then $\|\phi-\phi_{h, \Delta t}^\beta\|_{r,\frac{1}{2}-s, \Gamma, \ast} \leq C_{\beta,\varepsilon} h^{\min\{\beta(\frac{1}{2}+s), \frac{3}{2}+s\}{-\varepsilon}}$, where $s \in [0,\frac{1}{2}]$  and {$r \in [0,p)$}. }

\textit{b) Let $\psi$ be the solution to the single layer integral equation $V \psi = f$ and  $\psi_{h,\Delta t}^\beta$ the best approximation  {in the norm of ${H}^{r}_\sigma(\R^+, \widetilde{H}^{-\frac{1}{2}}(\Gamma))$ to $\psi$ in ${V}^{p,0}_{\Delta t,h}$} on a $\beta$-graded spatial mesh  with $\Delta t \lesssim h^\beta$. Then $\|\psi-\psi_{h, \Delta t}^\beta\|_{r,-\frac{1}{2}, \Gamma, \ast} \leq C_{\beta,\varepsilon} h^{\min\{\frac{\beta}{2}, \frac{3}{2}\}{-\varepsilon}}$, where {$r \in [0,p+1)$}. } \\

Indeed, on the flat screen the solutions to the integral equations are given by $\phi = \left[u\right]|_\Gamma$ {in terms of the solution $u$ which satisfies Neumann conditions $Bu = \partial_\nu u|_\Gamma = g$, respectively} $\psi = \left[\partial_\nu u\right]|_\Gamma$ {in terms of the solution $u$ which satisfies Dirichlet conditions $Bu = u|_\Gamma=f$}.

{Note that the energy norm associated to the weak form of the single layer integral equation \eqref{weakform} is weaker than the norm of ${H}^{1}_\sigma(\R^+, {H}^{-\frac{1}{2}}(\Gamma))$ and stronger than the norm of ${H}^{0}_\sigma(\R^+, {H}^{-\frac{1}{2}}(\Gamma))$, according to the coercivity and continuity properties of $V$ on screens \cite{HGEPSN}. Similarly, for the weak form of the hypersingular integral equation \eqref{weakformW}, the energy norm is weaker than the norm of ${H}^{1}_\sigma(\R^+, {H}^{\frac{1}{2}}(\Gamma))$ and stronger than the norm of ${H}^{0}_\sigma(\R^+, {H}^{\frac{1}{2}}(\Gamma))$ \cite{gimperleintyre}.\\}

\noindent \textbf{Remark C.} 
{\textit{Together with the a priori estimates for the time domain boundary element methods on screens \cite{HGEPSN, gimperleintyre}, Corollary B implies convergence rates for the Galerkin approximations, which recover those for smooth solutions {(up to an arbitrarily small $\varepsilon>0$)} provided the grading parameter $\beta$ is chosen sufficiently large. }}\\

We prove the  approximation properties in detail on the circular screen, without corners, and discuss the approximation of the corner singularity on polygonal screens. On the square, the convergence rate is determined by the singularities at the edges, in spite of the smaller singular exponents in a corner.
In all cases, we show that  \emph{time independent} algebraically graded meshes adapted to the singularities recover the optimal approximation rates expected for smooth solutions. \\

Numerical experiments confirm the theoretical results for the singular exponents and achieve the predicted convergence rates. Furthermore, they indicate the efficiency of our approach. 
 For the Dirichlet problem on a circular or square screen, reduced to an equation for the single layer operator, the convergence rate in the energy norm is doubled when the uniform mesh is replaced by a $2$-graded one. Similar results are obtained for the sound pressure, which is often the crucial quantity in applications. Even the singular exponents of the numerical solution near the edges and corners agree with those of the exact solution. The results generalize to the formulation of the Neumann problem as a hypersingular integral equation, where the predicted convergence rates and singular exponents at the edges are obtained.  The main difference to the Dirichlet problem is that the numerically computed singular exponents in the corner are in qualitative, though no longer quantitative agreement. Beyond these model problems, we study the Dirichlet-to-Neumann operator on screens, as relevant for dynamic interface and contact problems. The results reflect those for the hypersingular integral equation, and the errors due to the numerical approximation of the operator are seen to be negligible.\\

Finally, we show the relevance of graded meshes for a real-world question from traffic noise, where graded meshes allow to  accurately resolve the sound amplification around resonance frequencies.  \\

Graded meshes thus lead to optimal algorithms to resolve geometric singularities of the computational domain. They provide a key example for efficient approximations of the solution of transient wave equations by time-independent, adapted meshes. Such meshes also arise in adaptive algorithms based on time-integrated a posteriori error estimates \cite{apost}. \\

\noindent \emph{The article is organized as follows:} 
Section \ref{faframework} recalls the boundary integral operators associated to the wave equation as well as their mapping properties between suitable space-time anisotropic Sobolev spaces. It concludes by reformulating the Dirichlet and Neumann problems for the wave equation \eqref{eq:oriProblem} as boundary integral equations in the time domain. The following Section \ref{discretization} introduces graded meshes on $\Gamma$, corresponding space-time discretizations and a time domain boundary element method to solve the integral equations. The asymptotic expansions of solutions to the wave equation and their approximation are the content of Section \ref{sectasympt}, for circular and polygonal screens. Section \ref{algosect} discusses some algorithmic properties of the implementation, before numerical experiments are used to confirm the theoretical predictions in Section \ref{experiments}. The article concludes with a real-world application to traffic noise and computes the amplification of noise in the singular horn geometry between a tire and the road surface.

\section{Boundary integral operators and Sobolev spaces}
\label{faframework}

{To be specific, in $\mathbb{R}^3$ let $\Gamma$ be the boundary of a polyhedral domain, consisting of curved, polygonal boundary faces, or an open polyhedral surface (screen). In $\mathbb{R}^2$, $\Gamma$ is the boundary of a curved polygon, or $\Gamma$ is an open polygonal curve.} 

We make an ansatz for the solution to \eqref{eq:oriProblem} using the single layer potential in time domain,
\begin{equation}\label{singlay}
u(t,x) =\int_{\mathbb{R}^+ \times \Gamma} G(t- \tau,x,y)\ {\psi}(\tau,y)\ d\tau\ ds_y\ ,
\end{equation}
where $G$ is a fundamental solution to the wave equation and $\psi(\tau,y)=0$ for $\tau<0$. Specifically in 3 dimensions, we may choose
\begin{align*}
u(t,x) &=\frac{1}{{4} \pi} \int_\Gamma \frac{\psi(t-|x-y|,y)}{|x-y|}\ ds_y \ ,
\end{align*}
but for applications to traffic noise also different choices are relevant, see \eqref{greentraffic}.  Taking the Dirichlet boundary values on $\Gamma$ of the integral \eqref{singlay}, we obtain the single layer operator,
$$V \psi(t,x)={  \int_{\mathbb{R}^+\times \Gamma} G(t- \tau,x,y)\ \psi(\tau,y)\ d\tau\ ds_y\, , }$$
It allows to reduce the wave equation \eqref{eq:oriProblem} with Dirichlet boundary conditions, $u=f$ on $\Gamma$, to an equivalent integral equation
\begin{equation}\label{dirproblemV}
V \psi = u|_\Gamma =  f \ .
\end{equation}
After solving equation \eqref{dirproblemV} for the density $\psi$, the solution to the wave equation is obtained using equation \eqref{singlay}. \\

\noindent We also require the adjoint double layer  operator $K'$, as obtained from the Neumann boundary values, as well as the double layer operator $K$ and the hypersingular operator $W$ on $\Gamma$:
\begin{align}\label{operators}
\nonumber K\phi(t,x)&=\int_{\mathbb{R}^+\times \Gamma} \frac{\partial G}{\partial n_y}(t- \tau,x,y)\ \phi(\tau,y)\ d\tau\ ds_y,\\
K' \phi(t,x)&= \int_{\mathbb{R}^+\times \Gamma} \frac{\partial G}{\partial n_x}(t- \tau,x,y)\ \phi(\tau,y)\ d\tau\ ds_y\, ,\\
\nonumber W \phi(t,x)&= \int_{\mathbb{R}^+\times \Gamma} \frac{\partial^2 G}{\partial n_x \partial n_y}(t- \tau,x,y)\ \phi(\tau,y)\ d\tau\ ds_y \ .
\end{align}

\begin{remark}\label{vanish}
For a flat screen $\Gamma \subset \mathbb{R}^2 \times \{0\}$, the normal derivative of $G$ vanishes, and $K\phi=K'\phi=0$ in this case.
\end{remark}

{The boundary integral operators} are considered between space-time aniso\-tropic Sobolev spaces $H_\sigma^{{r}}(\mathbb{R}^+,\widetilde{H}^{{s}}(\Gamma))$, see  \cite{HGEPSN} or \cite{haduong}. {To define them, if $\partial\Gamma\neq \emptyset$, first extend $\Gamma$ to a closed, orientable Lipschitz manifold $\widetilde{\Gamma}$. }

{On $\Gamma$ one defines the usual Sobolev spaces of supported distributions:
$$\widetilde{H}^{{s}}(\Gamma) = \{u\in H^{{s}}(\widetilde{\Gamma}): \mathrm{supp}\ u \subset {\overline{\Gamma}}\}\ , \quad\ {{s}} \in \mathbb{R}\ .$$
Furthermore, ${H}^{{s}}(\Gamma)$ is the quotient space $ H^{{s}}(\widetilde{\Gamma}) / \widetilde{H}^{{s}}({\widetilde{\Gamma}\setminus\overline{\Gamma}})$.} \\
{To write down an explicit family of Sobolev norms, introduce a partition of unity $\alpha_i$ subordinate to a covering of $\widetilde{\Gamma}$ by open sets $B_i$. For diffeomorphisms $\phi_i$ mapping each $B_i$ into the unit cube $\subset \mathbb{R}^n$, a family of Sobolev norms is induced from $\mathbb{R}^d$:
\begin{equation*}
 ||u||_{{{s}},\omega,{\widetilde{\Gamma}}}=\left( \sum_{i=1}^p \int_{\mathbb{R}^n} (|\omega|^2+|\xi|^2)^{{s}}|\mathcal{F}\left\{(\alpha_i u)\circ \phi_i^{-1}\right\}(\xi)|^2 d\xi \right)^{\frac{1}{2}}\ .
\end{equation*}
The norms for different $\omega \in \mathbb{C}\setminus \{0\}$ are equivalent, and $\mathcal{F}$ denotes the Fourier transform. They induce norms on $H^{{s}}(\Gamma)$, $||u||_{{{s}},\omega,\Gamma} = \inf_{v \in \widetilde{H}^{{s}}(\widetilde{\Gamma}\setminus\overline{\Gamma})} \ ||u+v||_{{{s}},\omega,\widetilde{\Gamma}}$, and on $\widetilde{H}^{{s}}(\Gamma)$, $||u||_{{{s}},\omega,\Gamma, \ast } = ||e_+ u||_{{{s}},\omega,\widetilde{\Gamma}}$. $e_+$ extends the distribution $u$ by $0$ from $\Gamma$ to $\widetilde{\Gamma}$. It is stronger than $||u||_{{{s}},\omega,\Gamma}$ whenever ${{s}} \in \frac{1}{2} + \mathbb{Z}$.}

{We now define a class of space-time anisotropic Sobolev spaces:
\begin{definition}\label{sobdef}
For ${{r,s}} \in\mathbb{R}$ {and $\sigma>0$} define
\begin{align*}
 H^{{r}}_\sigma(\mathbb{R}^+,{H}^{{s}}(\Gamma))&=\{ u \in \mathcal{D}^{'}_{+}(H^{{s}}(\Gamma)): e^{-\sigma t} u \in \mathcal{S}^{'}_{+}(H^{{s}}(\Gamma))  \textrm{ and }   ||u||_{{{r,s}},\Gamma} < \infty \}\ , \\
 H^{{r}}_\sigma(\mathbb{R}^+,\widetilde{H}^{{s}}({\Gamma}))&=\{ u \in \mathcal{D}^{'}_{+}(\widetilde{H}^{{s}}({\Gamma})): e^{-\sigma t} u \in \mathcal{S}^{'}_{+}(\widetilde{H}^{{s}}({\Gamma}))  \textrm{ and }   ||u||_{{{r,s}},\Gamma, \ast} < \infty \}\ .
\end{align*}
$\mathcal{D}^{'}_{+}(E)$ resp.~$\mathcal{S}^{'}_{+}(E)$ denote the spaces of distributions, resp.~tempered distributions, on $\mathbb{R}$ with support in $[0,\infty)$, taking values in $E = {H}^{{s}}({\Gamma}), \widetilde{H}^{{s}}({\Gamma})$. The relevant norms are given by
\begin{align*}
\|u\|_{{{r,s}},\Gamma}&=\left(\int_{-\infty+i\sigma}^{+\infty+i\sigma}|\omega|^{2{{r}}}\ \|\hat{u}(\omega)\|^2_{{{s}},\omega,\Gamma}\ d\omega \right)^{\frac{1}{2}}\ ,\\
\|u\|_{{{r,s}},\Gamma,\ast}&=\left(\int_{-\infty+i\sigma}^{+\infty+i\sigma}|\omega|^{2{{r}}}\ \|\hat{u}(\omega)\|^2_{{{s}},\omega,\Gamma,\ast}\ d\omega \right)^{\frac{1}{2}}\,.
\end{align*}
\end{definition}
For $|{{s}}|\leq 1$ the spaces are independent of the choice of $\alpha_i$ and $\phi_i$. 

A useful technical result localizes estimates for fractional Sobolev norms, extending \cite[Lemma 3.2]{disspetersdorff} to space-time:
\begin{lemma}\label{lemma3.2}
Let $\Gamma,\, \Gamma_j\; (j=1,\dots ,N)$ be Lipschitz domains with $\overline{\Gamma} = \bigcup\limits_{j=1}^N \overline{\Gamma}_j$, $\tilde{u}\in {H^r_\sigma(\mathbb{R}^+}, \widetilde{H}^s(\Gamma)),\; u\in {H^r_\sigma(\mathbb{R}^+}, H^s(\Gamma)),\; s\in\mathbb{R}.$ Then for all $s \in [-1,1]$, $r \in \mathbb{R}$ {and $\sigma>0$}
\begin{align}
	\sum\limits_{j=1}^N \| u\|^2_{r,s,\Gamma_j} &\leq \| u\|^2_{r,s,\Gamma} \label{3.21a}\ ,\\
	\| \tilde{u}\|^2_{r,s,\Gamma, \ast} &\leq \sum\limits_{j=1}^N \| \tilde{u}\|^2_{r,s,\Gamma_j, \ast}\ .\label{3.21b}
\end{align}
\end{lemma}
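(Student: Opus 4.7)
The plan is to reduce both inequalities to their purely spatial counterparts from \cite[Lemma 3.2]{disspetersdorff}, applied at each fixed frequency. By Definition \ref{sobdef}, the space-time norms are simply weighted $L^2$ integrals in the Laplace variable $\omega$ along the horizontal contour $\mathbb{R}+i\sigma$ of the spatial norms $\|\hat u(\omega)\|_{s,\omega,\cdot}$, respectively $\|\hat{\tilde u}(\omega)\|_{s,\omega,\cdot,\ast}$, with weight $|\omega|^{2r}$. Thus the desired estimates have the form of integrated pointwise-in-$\omega$ inequalities.

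First, I would record the spatial localization: for fixed $\omega\in\mathbb{R}+i\sigma$, $v=\hat u(\omega)\in H^s(\Gamma)$, $\tilde v=\hat{\tilde u}(\omega)\in\widetilde{H}^s(\Gamma)$, and $s\in[-1,1]$,
\begin{equation*}
\sum_{j=1}^N \|v\|_{s,\omega,\Gamma_j}^2 \;\leq\; \|v\|_{s,\omega,\Gamma}^2, \qquad \|\tilde v\|_{s,\omega,\Gamma,\ast}^2 \;\leq\; \sum_{j=1}^N \|\tilde v\|_{s,\omega,\Gamma_j,\ast}^2.
\end{equation*}
This is precisely \cite[Lemma 3.2]{disspetersdorff}, except that the standard $H^s$-norm has been replaced by the $\omega$-weighted variant. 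The role of $\omega$ in Definition \ref{sobdef} is only the elliptic weight $(|\omega|^2+|\xi|^2)^s$ on the Fourier side, and von Petersdorff's argument proceeds via extension by zero, restriction, and a subordinate partition of unity, none of which interact with the specific value of $|\omega|$. Hence the same proof yields the $\omega$-weighted statement verbatim with the same constant $1$.

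The final step is to multiply each frequency-wise inequality by $|\omega|^{2r}$ and integrate over $\omega\in\mathbb{R}+i\sigma$. Since all integrands are non-negative and the sum is finite, Tonelli's theorem permits exchanging the sum and integral, and the definitions of $\|\cdot\|_{r,s,\Gamma}$ and $\|\cdot\|_{r,s,\Gamma,\ast}$ then assemble exactly \eqref{3.21a} and \eqref{3.21b}. The only non-mechanical point to verify is that the spatial step holds with constant $1$ \emph{uniformly} in $\omega$, rather than with an $\omega$-dependent equivalence constant (which would be fatal once integrated over the unbounded contour). This is clean because \cite[Lemma 3.2]{disspetersdorff} is a structural inequality in which the weight enters only through the ambient norm, not through any constants produced by the proof. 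The restriction $s\in[-1,1]$ is exactly the range in which $\|\cdot\|_{s,\omega,\cdot}$ is unambiguously defined, independently of the chosen local parametrizations.
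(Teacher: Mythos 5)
Your proposal is correct and follows essentially the same route as the paper, whose proof consists of the remark that the result is an immediate extension of the time-independent case \cite[Lemma 3.2]{disspetersdorff}: you have simply made explicit the frequency-wise application of the $\omega$-weighted spatial inequality and the subsequent integration in $\omega$ over the contour, including the (correct) observation that the constant $1$ is uniform in $\omega$.
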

The proof is an immediate extension of the time-independent case.

{The boundary integral operators} obey the following mapping properties between the space-time Sobolev spaces:
\begin{theorem}[\cite{HGEPSN}]\label{mappingproperties}
The following operators are continuous for $r\in \R$, ${{\sigma>0}}$:
\begin{align*}
& V:  {H}^{r+1}_\sigma(\R^+, \tilde{H}^{-\frac{1}{2}}(\Gamma))\to  {H}^{r}_\sigma(\R^+, {H}^{\frac{1}{2}}(\Gamma)) \ ,
\\ & K':  {H}^{r+1}_\sigma(\R^+, \tilde{H}^{-\frac{1}{2}}(\Gamma))\to {H}^{r}_\sigma(\R^+, {H}^{-\frac{1}{2}}(\Gamma)) \ ,
\\ & K:  {H}^{r+1}_\sigma(\R^+, \tilde{H}^{\frac{1}{2}}(\Gamma))\to {H}^{r}_\sigma(\R^+, {H}^{\frac{1}{2}}(\Gamma)) \ ,
\\ & W:  {H}^{r+1}_\sigma(\R^+, \tilde{H}^{\frac{1}{2}}(\Gamma)))\to {H}^{r}_\sigma(\R^+, {H}^{-\frac{1}{2}}(\Gamma)) \ .
\end{align*}
\end{theorem}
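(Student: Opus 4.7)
\textbf{Proof plan for Theorem \ref{mappingproperties}.} The natural approach is to pass to the frequency side via the Fourier--Laplace transform in time along the horizontal line $\Im\omega=\sigma>0$. Since all four operators are convolutions in $t$ (with kernels supported in $\{t\geq 0\}$), they act diagonally in frequency: if $\widehat{\psi}(\omega,\cdot)$ denotes the Fourier--Laplace transform of $\psi$ in time, then
\begin{equation*}
\widehat{V\psi}(\omega,\cdot)=V_\omega \widehat{\psi}(\omega,\cdot),\qquad \widehat{W\phi}(\omega,\cdot)=W_\omega \widehat{\phi}(\omega,\cdot),
\end{equation*}
and similarly for $K,K'$, where $V_\omega,K_\omega,K'_\omega,W_\omega$ are the boundary integral operators for the Helmholtz equation $(-\Delta-\omega^2)u=0$ with complex wave number $\omega$. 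Given this, the definition of the $H^r_\sigma(\mathbb{R}^+,\widetilde{H}^s(\Gamma))$ and $H^r_\sigma(\mathbb{R}^+,H^s(\Gamma))$ norms in Definition \ref{sobdef} as weighted $L^2$-integrals in $\omega$ of the $|\omega|$-dependent Sobolev norms on $\Gamma$ reduces the theorem to a pointwise (in $\omega$) family of estimates with the correct dependence on $|\omega|$.

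The central analytic step is therefore to prove, for every $\omega$ with $\Im\omega=\sigma$ bounded away from zero, bounds of the form
\begin{equation*}
\|V_\omega \widehat{\psi}\|_{\tfrac12,\omega,\Gamma}\leq C\,|\omega|\,\|\widehat{\psi}\|_{-\tfrac12,\omega,\Gamma,\ast},\qquad \|W_\omega \widehat{\phi}\|_{-\tfrac12,\omega,\Gamma}\leq C\,|\omega|\,\|\widehat{\phi}\|_{\tfrac12,\omega,\Gamma,\ast},
\end{equation*}
and the analogues for $K_\omega,K'_\omega$, with a constant $C$ independent of $\omega$. Combined with the identity $\|\cdot\|_{r,s,\Gamma}^2=\int_{-\infty+i\sigma}^{+\infty+i\sigma}|\omega|^{2r}\|\widehat{(\cdot)}(\omega)\|_{s,\omega,\Gamma}^2\,d\omega$, Parseval/Plancherel then yields the four continuity statements with the advertised shift $r+1\mapsto r$, the extra power of $|\omega|$ accounting precisely for the gain of one derivative in time.

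For the single layer, the proof of the frequency estimate is by the standard coercivity argument of Bamberger--Ha Duong: extending $\widehat{\psi}\in\widetilde{H}^{-1/2}(\Gamma)$ by zero and representing the Helmholtz single layer potential $u_\omega$ in $\mathbb{R}^3\setminus\Gamma$, one uses the jump relations $[\gamma u_\omega]=0$, $[\partial_\nu u_\omega]=-\widehat{\psi}$ together with integration by parts against $\overline{u_\omega}$ in a truncated domain to produce the energy identity
\begin{equation*}
\langle V_\omega \widehat{\psi},\widehat{\psi}\rangle_{\Gamma}=\int_{\mathbb{R}^3}\bigl(|\nabla u_\omega|^2-\omega^2|u_\omega|^2\bigr)\,dx.
\end{equation*}
Taking real and imaginary parts with $\omega=\xi_0+i\sigma$, $\sigma>0$, gives a lower bound involving $|\omega|^{-1}\|u_\omega\|_{H^1(\mathbb{R}^3)}^2$ in the $|\omega|$-weighted norm, and the trace theorem together with the supported structure of $\widehat\psi$ delivers the claimed $|\omega|$-explicit estimate. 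The cases of $K_\omega,K'_\omega$ are treated analogously using the double layer potential. The genuine difficulty, and the place where the screen assumption matters, is the hypersingular operator $W_\omega$: here one exploits the Maue-type identity
\begin{equation*}
\langle W_\omega\widehat\phi,\widehat\phi\rangle_\Gamma=\int_{\mathbb{R}^3}\bigl(|\curl u_\omega|^2-\omega^2|u_\omega|^2\bigr)\,dx
\end{equation*}
for the double layer potential $u_\omega$ of $\widehat\phi\in\widetilde{H}^{1/2}(\Gamma)$, rewriting the normal-derivative kernel in terms of surface curl operators. The main obstacle of the proof is controlling the lower-order $|\omega|^2\|u_\omega\|_{L^2}^2$ term uniformly in $\sigma$ and producing the $|\omega|$-weighted $H^{-1/2}(\Gamma)$-bound on $W_\omega\widehat\phi$ on the screen, which is where the careful tracking of constants in the Lipschitz trace inequality and the use of the partition-of-unity based norms $\|\cdot\|_{s,\omega,\Gamma,\ast}$ become essential; once this is accomplished, integrating over $\omega$ closes the argument for all four operators.
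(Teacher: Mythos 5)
Your plan is essentially the proof the paper relies on: Theorem \ref{mappingproperties} is quoted from \cite{HGEPSN} without proof, and the argument there (following Bamberger--Ha~Duong and Ha-Duong) is exactly your route — Fourier--Laplace transform on the line $\mathrm{Im}\,\omega=\sigma$, frequency-wise bounds for the Helmholtz layer operators with constants growing like $|\omega|/\sigma$, obtained from the variational/energy identity for the associated potentials plus $\omega$-weighted trace estimates, and then Plancherel in $\omega$ to convert the extra factor $|\omega|$ into the shift $r+1\mapsto r$. One small correction: for the hypersingular operator the energy identity for the double layer potential $u_\omega$ of $\widehat\phi$ reads $\langle W_\omega\widehat\phi,\overline{\widehat\phi}\rangle_\Gamma=\int_{\mathbb{R}^3\setminus\Gamma}\bigl(|\nabla u_\omega|^2-\omega^2|u_\omega|^2\bigr)\,dx$ (the integral must be taken over $\mathbb{R}^3\setminus\Gamma$ since $u_\omega$ jumps across $\Gamma$, and the integrand involves $\nabla u_\omega$, not a curl — the surface-curl expression is the Maue rewriting of the bilinear form on $\Gamma\times\Gamma$, not a field energy); this does not affect the viability of your plan.
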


\noindent When {$\Gamma = \mathbb{R}^{n-1}_+$}, Fourier methods yield improved estimates for $V$ and $W$:
\begin{theorem}[\cite{haduongcrack}, pp. 503-506]\label{mapimproved} The following operators are continuous for $r,s \in \mathbb{R}$, ${{\sigma>0}}$:
\begin{align*}
& V:{H}^{r+\frac{1}{2}}_\sigma(\R^+, \tilde{H}^{s}(\Gamma))\to {H}^{r}_\sigma(\R^+, {H}^{s+1}(\Gamma)) \ , \\
& W: {H}^{r}_\sigma(\R^+, \tilde{H}^{s}(\Gamma))\to {H}^{r}_\sigma(\R^+, {H}^{s-1}(\Gamma))\ .
\end{align*}
\end{theorem}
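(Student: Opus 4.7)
The plan is to exploit the flat geometry of $\Gamma = \mathbb{R}^{n-1}$ via Fourier-Laplace transform: both $V$ and $W$ are then convolution operators in time and in the tangential spatial variables, hence become Fourier multipliers in $(\omega,\xi)$ with $\omega = \xi_0 + i\sigma$ dual to $t$ and $\xi$ dual to the tangential directions. Everything will reduce to a pointwise bound on the symbols, combined with Plancherel.

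First I would compute the symbols explicitly. For the three-dimensional fundamental solution $G(t,x)=\delta(t-|x|)/(4\pi|x|)$, Laplace transforming in $t$ and Fourier transforming in the two tangential directions gives
$$\Gamma_V(\omega,\xi) = \frac{1}{2\sqrt{|\xi|^2-\omega^2}}\ , \qquad \Gamma_W(\omega,\xi) = \tfrac{1}{2}\sqrt{|\xi|^2-\omega^2}\ ,$$
where the square root is fixed by choosing the branch with positive real part on $\{\mathrm{Im}\,\omega = \sigma>0\}$, on which $|\xi|^2-\omega^2$ avoids the negative real axis. Next, by Plancherel the anisotropic norms of Definition \ref{sobdef} are equivalent on the flat screen to the weighted $L^2$-norms with weight $|\omega|^{2r}(|\omega|^2+|\xi|^2)^s$, so continuity of $V$ and $W$ reduces to the pointwise multiplier estimates
$$(|\omega|^2+|\xi|^2)^{1/2}\,|\Gamma_V(\omega,\xi)| \leq C_\sigma\,|\omega|^{1/2}\ , \qquad |\Gamma_W(\omega,\xi)| \leq C_\sigma\,(|\omega|^2+|\xi|^2)^{1/2}\ .$$

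The bound for $W$ is essentially immediate, since $|\xi|^2-\omega^2$ is a polynomial of degree two in $(|\xi|,|\omega|)$ and hence $||\xi|^2-\omega^2|\leq C(|\omega|^2+|\xi|^2)$. The bound for $V$ is the heart of the matter: it is the weighted estimate
$$\frac{|\omega|^2+|\xi|^2}{||\xi|^2-\omega^2|} \leq C_\sigma\,|\omega|\ .$$
I would verify this by a three-region case analysis. In the regimes $|\xi|\gg|\xi_0|$ and $|\xi_0|\gg|\xi|$, the denominator satisfies $||\xi|^2-\omega^2|\gtrsim |\xi|^2$ resp.~$\gtrsim|\xi_0|^2$, so the ratio is bounded by a constant and the right-hand side $|\omega|\geq\sigma$ absorbs it.

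The delicate regime is $|\xi|\sim|\xi_0|$, near the light cone, where the real part of $|\xi|^2-\omega^2 = |\xi|^2-\xi_0^2+\sigma^2-2i\sigma\xi_0$ is small. The imaginary part saves the day: $||\xi|^2-\omega^2|\geq 2\sigma|\xi_0|$, so the ratio is controlled by $|\omega|/\sigma$. This is the main obstacle, and it is precisely the step that forces the constant to depend on $\sigma$ and explains why positivity of $\sigma$ is essential. Combining these pointwise bounds with Plancherel in the anisotropic norms of Definition \ref{sobdef} yields the two continuity statements. The case of general $n$ is identical once the analogous symbol formulas on $\mathbb{R}^{n-1}$ are written down; no further idea is required.
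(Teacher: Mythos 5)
Your route is the same as the one behind the paper's own treatment: the paper gives no proof of Theorem \ref{mapimproved} but quotes Ha-Duong's flat-screen analysis, and that proof is precisely the Fourier--Laplace symbol argument you set up --- the symbols $\Gamma_V=\bigl(2\sqrt{|\xi|^2-\omega^2}\bigr)^{-1}$ and $\Gamma_W=\tfrac12\sqrt{|\xi|^2-\omega^2}$ with $\omega=\xi_0+i\sigma$, the reduction by Plancherel to weighted multiplier bounds (legitimate on the screen because $\widetilde{H}^{s}$ is the zero-extension norm for the density and the target carries a restriction norm, so the full-plane estimate transfers), the trivial bound $|\Gamma_W|^2\le\tfrac14(|\omega|^2+|\xi|^2)$ for $W$, and the weighted estimate for $V$ whose constant degenerates like $1/\sigma$.

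One step is not correct as written, though. In your ``delicate regime'' $|\xi|\sim|\xi_0|$ you control the denominator only by the imaginary part, $\bigl||\xi|^2-\omega^2\bigr|\ge 2\sigma|\xi_0|$, and conclude the ratio is $\lesssim|\omega|/\sigma$. This fails when $|\xi_0|\lesssim\sigma$: for instance $\omega$ near $i\sigma$ and $|\xi|\sim|\xi_0|\to0$ gives $2\sigma|\xi_0|\to0$ while $|\omega|^2+|\xi|^2\sim\sigma^2$, so the bound $|\omega|^2/(2\sigma|\xi_0|)$ blows up; in that subcase it is the real part $|\xi|^2-\xi_0^2+\sigma^2\gtrsim\sigma^2$ that saves you, not the imaginary part. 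The clean repair, which also makes the case analysis uniform, is the factorization
\begin{equation*}
\bigl||\xi|^2-\omega^2\bigr|^2=\bigl((|\xi|-\xi_0)^2+\sigma^2\bigr)\bigl((|\xi|+\xi_0)^2+\sigma^2\bigr)\ \ge\ \sigma^2\bigl(\xi_0^2+\sigma^2\bigr)=\sigma^2|\omega|^2\ ,
\end{equation*}
since one of the two factors is always $\ge\xi_0^2+\sigma^2$ and the other is $\ge\sigma^2$. With $\bigl||\xi|^2-\omega^2\bigr|\ge\sigma|\omega|$ in hand, splitting only into $|\xi|\le 2|\omega|$ and $|\xi|\ge 2|\omega|$ gives $\frac{|\omega|^2+|\xi|^2}{||\xi|^2-\omega^2|}\lesssim|\omega|/\sigma$ everywhere, which is exactly the multiplier bound you need for $V$; the rest of your argument, including the $W$ estimate and the observation that $\sigma>0$ is what makes the constant finite, then goes through unchanged.
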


The space-time Sobolev spaces allow a precise statement and analysis of the weak formulation for the Dirichlet problem \eqref{dirproblemV}:
{Find $\psi \in H^1_{\sigma}(\mathbb{R}^+,\widetilde{H}^{-\frac{1}{2}}(\Gamma))$ such that} for all $\Psi\in H^1_{\sigma}(\mathbb{R}^+,\widetilde{H}^{-\frac{1}{2}}(\Gamma))$
\begin{equation}\label{weakform}
 \int_0^\infty\int_\Gamma (V \psi(t,{\bf x})) \partial_t\Psi(t,{\bf x}) \ ds_x\ d_\sigma t = \int_0^\infty \int_\Gamma f(t,{\bf x}) \partial_t\Psi(t,{\bf x}) \ ds_x\ d_\sigma t\ , 
\end{equation}
where $d_\sigma t = e^{-2 \sigma t}dt$.\\

To obtain an analogous weak formulation for  the Neumann problem, one starts from a double layer potential ansatz for $u$:
\begin{align}\label{doubleansatz}
u(t,x)&=\int_{\mathbb{R}^+ \times \Gamma}  \frac{\partial G}{\partial n_y}(t- \tau,x,y)\ \phi(\tau,y)\ d\tau\ ds_y
\end{align}
with $\phi(s,y) = 0$ for $s\leq 0$. The corresponding integral formulation is the hypersingular equation
\begin{align}\label{hypersingeq}
 W \phi = \frac{\partial u}{\partial n}\Big|_\Gamma= g\ .
\end{align}

Find $\phi \in H^1_\sigma(\mathbb{R}^+, \widetilde{H}^{\frac{1}{2}}(\Gamma))$ such that for all $\Phi \in H^1_\sigma(\mathbb{R}^+, \widetilde{H}^{\frac{1}{2}}(\Gamma))$ there holds:
\begin{align}\label{weakformW}
 \int_{\mathbb{R}^+\times \Gamma} (W  \phi(t,{\bf x})) \ {\partial_t}\Phi(t,{\bf x}) \,d_\sigma t \, ds_x\  = \int_{\mathbb{R}^+\times \Gamma}   g(t,{\bf x})\ {\partial_t}\Phi(t,{\bf x})\,dt \, ds_x\ .
\end{align}

The weak formulations \eqref{weakform}, respectively \eqref{weakformW}, for the Dirichlet and Neumann problems are well-posed \cite{HGEPSN, gimperleintyre}:
\begin{theorem}{Let $\sigma>0$.}\\
a) Assume that $f \in H^2_{\sigma}(\mathbb{R}^+,H^{\frac{1}{2}}(\Gamma))$. Then there exists a unique solution $\psi \in H^1_{\sigma}(\mathbb{R}^+,\widetilde{H}^{-\frac{1}{2}}(\Gamma))$  of \eqref{weakform} and
\begin{equation}
\|\psi\|_{1, -\frac{1}{2}, \Gamma, \ast} \lesssim_\sigma \|f\|_{2, \frac{1}{2}, \Gamma}\ .
\end{equation}
b) Assume that $g \in H^{2}_{\sigma}(\mathbb{R}^+,H^{-\frac{1}{2}}(\Gamma))$. Then there exists a unique solution $\phi \in H^{1}_{\sigma}(\mathbb{R}^+,\widetilde{H}^{\frac{1}{2}}(\Gamma))$  of \eqref{weakformW} and
\begin{equation}
\|\phi\|_{1,\frac{1}{2}, \Gamma, \ast} \leq C \|g\|_{2,-\frac{1}{2}, \Gamma} \ .
\end{equation}
\end{theorem}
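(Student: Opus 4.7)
The plan is to follow the Laplace-transform strategy of Bamberger and Ha-Duong, as implemented in \cite{HGEPSN,gimperleintyre}. First I would apply the Fourier-Laplace transform in time, writing $\omega=\xi+i\sigma$, so that \eqref{weakform} and \eqref{weakformW} are converted into parametrized families of stationary problems on $\Gamma$ involving the transformed single layer operator $\hat{V}(\omega)$, respectively the transformed hypersingular operator $\hat{W}(\omega)$. The factor $\partial_t\Psi$ in the test function is turned by Parseval into multiplication by $-i\bar{\omega}$ against $\hat{\Psi}$, which is the key algebraic feature exploited by the Ha-Duong argument and is what makes the left-hand side positive.

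The central step, and the main obstacle, is a frequency-explicit coercivity estimate on the screen. For the single layer one wants, for all $\hat{\psi}\in\widetilde{H}^{-\frac{1}{2}}(\Gamma)$,
\begin{equation*}
\mathrm{Re}\,\langle \hat{V}(\omega)\hat{\psi},\,-i\bar{\omega}\,\hat{\psi}\rangle_{\Gamma}\;\geq\;C\,\sigma\,|\omega|^{-1}\,\|\hat{\psi}\|^{2}_{-\frac{1}{2},\omega,\Gamma,\ast}\,,
\end{equation*}
and the analogous bound for $\hat{W}(\omega)$ on $\widetilde{H}^{\frac{1}{2}}(\Gamma)$. Such estimates are established in \cite{HGEPSN,gimperleintyre}: one extends $\hat{\psi}$ by zero to the auxiliary closed Lipschitz surface $\widetilde{\Gamma}$ (using the supported spaces $\widetilde{H}^{\pm\frac{1}{2}}(\Gamma)$), represents $\hat{V}(\omega)\hat{\psi}$ via the associated Helmholtz layer potential, and applies Green's identity together with a Plancherel argument in the exterior of $\widetilde{\Gamma}$. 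The open-screen geometry, as opposed to a closed surface, is what forces the use of the supported spaces and makes the behaviour of the potentials near $\partial\Gamma$ the delicate point; this is also why the norms on the two sides of the theorem differ by the full scale of one in $r$ and one in $s$.

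Granted this coercivity, the Lax-Milgram lemma in $\widetilde{H}^{-\frac{1}{2}}(\Gamma)$ (respectively $\widetilde{H}^{\frac{1}{2}}(\Gamma)$), combined with the continuity of $\hat{V}(\omega)$ and $\hat{W}(\omega)$ inherited from Theorem \ref{mappingproperties}, yields unique existence of $\hat{\psi}(\omega)$ pointwise in $\omega$, together with the bound
\begin{equation*}
\|\hat{\psi}(\omega)\|^{2}_{-\frac{1}{2},\omega,\Gamma,\ast}\;\lesssim_{\sigma}\;|\omega|^{2}\,\|\hat{f}(\omega)\|^{2}_{\frac{1}{2},\omega,\Gamma}\,.
\end{equation*}
Integrating this inequality against $|\omega|^{2}\,d\omega$ along the horizontal contour $\{\xi+i\sigma:\xi\in\R\}$ and invoking Definition \ref{sobdef} produces precisely $\|\psi\|_{1,-\frac{1}{2},\Gamma,\ast}\lesssim_{\sigma}\|f\|_{2,\frac{1}{2},\Gamma}$, and uniqueness in $H^{1}_{\sigma}(\R^{+},\widetilde{H}^{-\frac{1}{2}}(\Gamma))$ is inherited from the frequency-wise uniqueness together with the injectivity of the Laplace transform on $\mathcal{D}_{+}'$. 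Part b) is handled in exactly the same manner, with $W$ in place of $V$ and $\widetilde{H}^{\frac{1}{2}}(\Gamma)$ in place of $\widetilde{H}^{-\frac{1}{2}}(\Gamma)$; the coercivity of $\hat{W}(\omega)$ is proved from the jump relations for the double layer potential applied to the representation \eqref{doubleansatz}.
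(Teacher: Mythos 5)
Your strategy is the right one: the paper itself gives no proof of this theorem but defers to \cite{HGEPSN,gimperleintyre}, and those references argue exactly as you outline — Fourier--Laplace transform, frequency-wise coercivity of $\hat V(\omega)$ and $\hat W(\omega)$ obtained from Green's identity for the associated Helmholtz potentials, Lax--Milgram at fixed $\omega$, and Parseval along the contour $\mathrm{Im}\,\omega=\sigma$.

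There is, however, a concrete quantitative gap in your chain of estimates. With the coercivity constant you state, $\mathrm{Re}\,\langle \hat V(\omega)\hat\psi,-i\bar\omega\hat\psi\rangle \geq C\sigma|\omega|^{-1}\|\hat\psi\|^2_{-\frac12,\omega,\Gamma,\ast}$, the weak form only gives $C\sigma|\omega|^{-1}\|\hat\psi\|^2 \leq |\omega|\,\|\hat f\|_{\frac12,\omega}\|\hat\psi\|_{-\frac12,\omega,\ast}$, hence $\|\hat\psi(\omega)\|_{-\frac12,\omega,\ast}\lesssim_\sigma |\omega|^2\|\hat f(\omega)\|_{\frac12,\omega}$ — not the bound $\lesssim_\sigma |\omega|\,\|\hat f(\omega)\|_{\frac12,\omega}$ you then assert. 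Integrating against $|\omega|^2\,d\omega$ would only yield $\|\psi\|_{1,-\frac12,\Gamma,\ast}\lesssim_\sigma\|f\|_{3,\frac12,\Gamma}$, one order weaker than the theorem. The statement as given requires the sharper, $\omega$-uniform coercivity $\mathrm{Re}\,\langle \hat V(\omega)\hat\psi,-i\bar\omega\hat\psi\rangle \geq c\,\sigma\,\|\hat\psi\|^2_{-\frac12,\omega,\Gamma,\ast}$ on the line $\mathrm{Im}\,\omega=\sigma$, and this is in fact what the Green's identity argument you sketch delivers: with $u=S_\omega\hat\psi$ one gets the exact identity $\mathrm{Re}\bigl[i\bar\omega\bigl(\|\nabla u\|^2_{L^2}-\omega^2\|u\|^2_{L^2}\bigr)\bigr]=\sigma\bigl(\|\nabla u\|^2_{L^2}+|\omega|^2\|u\|^2_{L^2}\bigr)$, and the duality/normal-trace estimate $\|[\partial_\nu u]\|_{-\frac12,\omega,\Gamma,\ast}\lesssim \bigl(\|\nabla u\|^2+|\omega|^2\|u\|^2\bigr)^{1/2}$ (using $\Delta u=-\omega^2u$ and an extension operator $H^{\frac12}_\omega(\Gamma)\to H^1_\omega(\mathbb{R}^3)$ bounded uniformly in $\omega$) has a constant independent of $\omega$. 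So either correct the exponent in the coercivity statement, or your proof proves a weaker theorem than the one claimed; the same remark applies verbatim to part b), where the two-sided trace estimate $\|[\,v\,]\|_{\frac12,\omega}\lesssim\|v\|_{1,\omega}$ for the double layer potential must likewise be $\omega$-uniform. A second, minor omission: to conclude that the inverse transform of the frequency-wise solution lies in $H^1_\sigma(\mathbb{R}^+,\cdot)$, i.e.\ is causal, you need analyticity of $\omega\mapsto\hat\psi(\omega)$ in the half-plane $\mathrm{Im}\,\omega\geq\sigma$ together with the polynomial bounds, and then a Paley--Wiener argument; injectivity of the Laplace transform alone gives uniqueness but not the support property.
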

While a theoretical analysis requires $\sigma>0$, practical computations use $\sigma=0$ \cite{BamHa,gimperleinreview}.\\

With a view towards contact problems \cite{contact}, we also consider an equation for the Dirichlet-to-Neumann operator $\mathcal{S}_\sigma$. For $\sigma>0$ and given boundary data $u_\sigma$, we consider
\begin{align}\label{1.3}
\begin{cases}
\left(\frac{\partial}{\partial t}+\sigma\right)^2w_\sigma - \Delta w_\sigma = 0 \ ,\hspace{1cm}  &\text{for}\ (t,x)\in\mathbb{R}\times\Omega\ ,
\\ w_\sigma=u_\sigma\ ,\hspace{1cm}  &\text{for}\ (t,x)\in\mathbb{R}\times\Gamma\ , \\ w_\sigma=0, &\text{for}\ (t,x)\in(-\infty,0)\times\Omega\ .
\end{cases}
\end{align}
The Dirichlet-to-Neumann operator is defined as \begin{align}\label{1.4}
\mathcal{S}_\sigma u_\sigma|_{\Gamma} := \frac{\partial w_\sigma}{\partial \nu}\Big|_{\Gamma}\ ,
\end{align}
We recall from \cite{sako}, p.~48:
\begin{theorem}\label{sakosol}
Let $h\in  H^{\frac{3}{2}}_\sigma(\mathbb{R}^+,{H}^{-\frac{1}{2}}(\Gamma))$. Then there exists a unique $u_\sigma\in {H}^{\frac{1}{2}}_\sigma(\R^+, \tilde{H}^{\frac{1}{2}}(\Gamma))$ such that {for all} $v \in {H}^{-\frac{1}{2}}_\sigma(\R^+, \tilde{H}^{\frac{1}{2}}(\Gamma))$:	
\begin{align}\label{DNeq}
\langle \mathcal{S}_\sigma u_\sigma,v \rangle =\langle h,v \rangle \ .
\end{align} 
\end{theorem}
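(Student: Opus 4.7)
The plan is to pass to the Laplace--Fourier domain along the line $\{\mathrm{Im}\,\omega = \sigma\}$ with $\sigma>0$, reducing \eqref{1.3} to a parameter-dependent elliptic problem of Helmholtz type. Writing $s = -i\omega$, the shifted wave operator $(\partial_t + \sigma)^2 - \Delta$ transforms into $(s+\sigma)^2 - \Delta$, which is strictly elliptic on $\Omega$ once $\mathrm{Re}\,s \geq 0$. This yields a well-defined frequency-domain Dirichlet-to-Neumann operator $\widehat{\mathcal S}_\sigma(\omega): \widetilde{H}^{1/2}(\Gamma) \to H^{-1/2}(\Gamma)$ defined by $\widehat{\mathcal S}_\sigma(\omega)\hat u_\sigma = \partial_\nu \hat w_\sigma|_\Gamma$, where $\hat w_\sigma \in H^1(\Omega)$ solves the Dirichlet problem for $(s+\sigma)^2 - \Delta$ with data $\hat u_\sigma$. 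By Definition \ref{sobdef} and Plancherel, it suffices to establish existence, uniqueness and a frequency-dependent estimate at each $\omega$, and then integrate the squared bound over the contour.

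At each fixed $\omega$, a Green's identity applied with the weighted test function $\overline{(s+\sigma)}\,\hat w_\sigma$ yields both the coercivity estimate
\begin{equation*}
\mathrm{Re}\,\bigl\langle \widehat{\mathcal S}_\sigma(\omega)\hat u_\sigma,\, \overline{(s+\sigma)}\,\hat u_\sigma\bigr\rangle_\Gamma \;\gtrsim\; \sigma \,\|\hat u_\sigma\|^{2}_{1/2,\omega,\Gamma,\ast}
\end{equation*}
and the continuity bound $|\langle \widehat{\mathcal S}_\sigma(\omega)\hat u, \hat v\rangle_\Gamma| \lesssim |\omega|\,\|\hat u\|_{1/2,\omega,\Gamma,\ast}\,\|\hat v\|_{1/2,\omega,\Gamma,\ast}$, the latter obtained from the trace theorem and from continuous dependence of $\hat w_\sigma$ on its boundary data in the $\omega$-weighted $H^1(\Omega)$ norm. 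The Lax--Milgram lemma applied frequency-wise then produces a unique $\hat u_\sigma(\omega) \in \widetilde{H}^{1/2}(\Gamma)$ satisfying \eqref{DNeq} pointwise in $\omega$, with the bound $\|\hat u_\sigma(\omega)\|_{1/2,\omega,\Gamma,\ast} \lesssim \sigma^{-1}|\omega|\,\|\hat h(\omega)\|_{-1/2,\omega,\Gamma}$.

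Squaring, multiplying by the Sobolev weight $|\omega|^{2\cdot 1/2} = |\omega|$, and integrating over the contour then yields the space-time bound $\|u_\sigma\|_{1/2,1/2,\Gamma,\ast}^2 \lesssim \sigma^{-2}\|h\|_{3/2,-1/2,\Gamma}^2$, which together with the inverse Fourier--Laplace transform gives the claim; uniqueness is inherited frequency-wise from the Lax--Milgram step. The main obstacle is the careful bookkeeping of $\omega$-powers: the one-unit spatial regularity jump of $\mathcal S_\sigma$ from $\widetilde H^{1/2}(\Gamma)$ to $H^{-1/2}(\Gamma)$ must be balanced against exactly one extra factor of $|\omega|$, producing the apparent mismatch $3/2$ versus $1/2$ between the temporal indices of $h$ and $u_\sigma$. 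A secondary technical point is to verify on an open screen that the variational formulation correctly pairs $\widetilde H^{1/2}(\Gamma)$ against itself; this is handled by extending $\Gamma$ to a closed Lipschitz manifold $\widetilde\Gamma$ as in the setup preceding Definition \ref{sobdef} and restricting to admissible extensions supported in $\overline\Gamma$, after which the argument runs in parallel with the time-harmonic Steklov--Poincar\'e theory.
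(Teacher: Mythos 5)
The paper itself gives no proof of this statement: Theorem \ref{sakosol} is recalled verbatim from Sako's thesis \cite{sako}, p.~48, so there is no in-paper argument to compare against, and your proposal must be judged on its own. The Fourier--Laplace reduction and the final bookkeeping of $|\omega|$-powers are consistent with the stated numerology ($3/2$ versus $1/2$ in the temporal index), but the central step is not correct. The claimed frequency-wise coercivity $\mathrm{Re}\,\langle\widehat{\mathcal S}_\sigma(\omega)\hat u,\overline{(s+\sigma)}\,\hat u\rangle\gtrsim\sigma\|\hat u\|^2_{1/2,\omega,\Gamma,\ast}$ does not follow from Green's identity plus the trace theorem, and on an open screen it is in fact false. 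Green's identity gives $\mathrm{Re}\,\langle\widehat{\mathcal S}_\sigma(\omega)\hat u,\overline{(s+\sigma)}\,\hat u\rangle\simeq\sigma\bigl(\|\nabla\hat w\|^2_{L^2(\Omega)}+|s+\sigma|^2\|\hat w\|^2_{L^2(\Omega)}\bigr)$, and since the Dirichlet data are the same from both sides of $\Gamma$, the solution $\hat w$ has no jump across the plane; the trace theorem therefore bounds this energy from below only by the \emph{quotient} norm $\|\hat u\|^2_{1/2,\omega,\Gamma}$, not by the zero-extension norm $\|\cdot\|_{1/2,\omega,\Gamma,\ast}$. Moreover, inserting a lifting of $\hat u$ with near-minimal weighted $H^1$-energy shows the same pairing is bounded \emph{above} by $C(\omega,\sigma)\,\|\hat u\|^2_{1/2,\omega,\Gamma}$. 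If your lower bound held at even one fixed $\omega$, the two norms would be equivalent on $\widetilde H^{1/2}(\Gamma)$, which fails exactly at the exceptional order $s=\tfrac12$: the paper notes after Definition \ref{sobdef} that the $\ast$-norm is strictly stronger when $s\in\tfrac12+\mathbb{Z}$, and extension by zero is unbounded on $H^{1/2}$. Consequently the Lax--Milgram step does not produce a solution in $\widetilde H^{1/2}(\Gamma)$, and the argument collapses precisely at the point you defer as a ``secondary technical point.''

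A correct argument has to confront this: with coercivity only in the weaker quotient-type norm, existence in $H^{1/2}_\sigma(\mathbb{R}^+,\widetilde H^{1/2}(\Gamma))$ requires either an inf-sup/duality argument with a deliberate loss --- which is where the asymmetric test space $H^{-1/2}_\sigma(\mathbb{R}^+,\widetilde H^{1/2}(\Gamma))$ and the extra temporal regularity $h\in H^{3/2}_\sigma(\mathbb{R}^+,H^{-1/2}(\Gamma))$ are genuinely used, rather than appearing as mere bookkeeping --- or the explicit Fourier symbol analysis for the flat screen, as in Ha Duong's crack estimates \cite{haduongcrack} underlying Theorem \ref{mapimproved} and in Sako's own proof, where control in $\widetilde H^{\pm1/2}$ is extracted by a separate duality computation on the half-space rather than from a volume-energy trace bound. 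To salvage your outline you would need to replace the coercivity step by such an analysis and then show that the solution obtained in the quotient-norm-completed space actually admits an $H^{1/2}$ extension by zero; that verification is the substance of the theorem, not an afterthought.
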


\section{Discretization}
\label{discretization}
For the time discretization we consider a uniform decomposition of the time interval $[0,\infty)$ into subintervals $[t_{n-1}, t_n)$ with time step $\Delta t$, such that $t_n=n\Delta t \; (n=0,1,\dots)$. \\

{In $\mathbb{R}^3$, we may assume that $\Gamma$ consists of } closed triangular faces $\Gamma_i$ such that $\Gamma=\cup_{i} \Gamma_i$. {In $\mathbb{R}^2$, $\Gamma=\cup_{i} \Gamma_i$ is partitioned into} line segments $\Gamma_i$.} 

We choose a basis $\{ \xi_h^1 , \cdots , \xi_h^{N_s}\}$ of the space $V_h^q(\Gamma)$ of piecewise polynomial functions of degree $q$ in
space. Moreover we define $\widetilde{V}_h^q(\Gamma)$ as the space $V_h^q(\Gamma)$, where the polynomials vanish on $\partial \Gamma$ for $q\geq 1$. For the time discretization we choose a basis $\{\beta_{\Delta t}^1,\cdots,\beta_{\Delta t}^{N_t}\}$  of  the space $V^p_{t}$ of piecewise  polynomial  functions of degree of $p$ in time (continuous and vanishing at $t=0$ if $p\geq 1$).\\
Let $\mathcal{T}_S={\{\Delta_1,\cdots,\Delta_{N}\}}$ be a quasi-uniform triangulation of $\Gamma$ and $\mathcal{T}_T=\{[0,t_1),[t_1,t_2),\cdots,$ $[t_{M-1},T)\}$ the time mesh for a finite subinterval $[0,T)$.\\

We consider the tensor product of the approximation spaces in space and time, $V_h^q$ and $V^p_{\Delta t}$, associated to the space-time mesh $\mathcal{T}_{S,T}=\mathcal{T}_S \times\mathcal{T}_T$, and we write
\begin{align}\label{fespace}
V_{\Delta t,h}^{p,q}:=  V_{\Delta t}^p \otimes V_{h}^q\ .
\end{align}
We analogously define 
\begin{align}\label{fespace2}
\tilde{V}_{\Delta t,h}^{p,q}:=  V_{\Delta t}^p \otimes \tilde{V}_{h}^q\ .
\end{align}
For $u_{\Delta t,h} \in V_{\Delta t,h}^{p,q} $ we thus may write 
\begin{align*}
u_{\Delta t,h}(t,x)=\sum \limits_{i=0}^{N_t} \sum \limits_{j=0}^{N_s} c_j^i \beta_{\Delta t}^i(t)\xi_h^j(x) \ .
\end{align*} 
In the following we use the notation
\begin{itemize}
\item $\gamma_{\Delta t}^n(t)$ \text{ for the basis of piecewise constant functions in time,}
\item $\beta_{\Delta t}^n(t)$ \text{for the basis of piecewise linear functions in time,}
\item $\psi_{h}^i(x)$ \text{for the basis of piecewise  constant functions in space,}
\item $\xi_{h}^i(x)$ \text{for the basis of piecewise linear functions in space.}
\end{itemize}

The Galerkin discretization of the Dirichlet problem \eqref{weakform} is then given by:\\

\noindent Find $\psi_{\Delta t, h} \in V_{\Delta t,h}^{p,q}$ such that} for all $\Psi_{\Delta t, h}\in V_{\Delta t,h}^{p,q}$
\begin{equation}\label{weakformh}
 \int_0^\infty\int_\Gamma (V \psi_{\Delta t, h}(t,{\bf x})) \partial_t\Psi_{\Delta t, h}(t,{\bf x}) \ ds_x\ d_\sigma t = \int_0^\infty \int_\Gamma f(t,{\bf x}) \partial_t\Psi_{\Delta t, h}(t,{\bf x}) \ ds_x\ d_\sigma t\ .
\end{equation}

For the Neumann problem \eqref{weakformW}, we have:\\

\noindent Find $\phi_{\Delta t, h} \in \widetilde{V}_{t,h}^{p,q}$ such that for all $\Phi_{\Delta t, h}\in \widetilde{V}_{t,h}^{p,q}$
\begin{equation}\label{weakformWh}
 \int_0^\infty\int_\Gamma (W \phi_{\Delta t, h}(t,{\bf x})) \partial_t\Phi_{\Delta t, h}(t,{\bf x}) \ ds_x\ d_\sigma t = \int_0^\infty \int_\Gamma g(t,{\bf x}) \partial_t\Phi_{\Delta t, h}(t,{\bf x}) \ ds_x\ d_\sigma t\ .
\end{equation}

From the weak coercivity of $V$, respectively $W$, the discretized problems \eqref{weakformh} and \eqref{weakformWh} admit unique solutions.\\

 Our computations are mainly conducted on graded meshes on the square [$-1,1]^2$, respectively on the circular screen $\{(x,y,0) : \sqrt{x^2+y^2}\leq 1\}$. To define $\beta$-graded meshes on the square, due to symmetry, it suffices to consider a $\beta$-graded mesh on $[-1,0]$. We define $y_k=x_k=-1+(\frac{k}{N_l})^{\beta}$ for $k=1,\ldots,N_l$ and for a constant $\beta\geq 0$. The nodes of the $\beta$-graded mesh {on the square} are therefore $(x_k,y_l),~k,l=1,\ldots, N_l$. We note that for $\beta=1$ we would have a uniform mesh.  \\

In a general convex, polyhedral geometry graded meshes are locally modeled on this example. In particular, on the circular screen of radius $1$, for $\beta =1$ we take a uniform mesh with nodes on concentric circles of radius $r_k=1-\frac{k}{N_l}$ for $k=0,\ldots,N_l-1$. For the $\beta$-graded mesh, the radii are moved to $r_k=1-(\frac{k}{N_l})^\beta$ for $k=0,\ldots,N_l-1$. While the triangles become increasingly flat near the boundary, their total number remains proportional to $N_l^2$. 

Examples of the resulting $2$-graded meshes on the square and the circular screens are depicted in Figure \ref{gradmesh}. \\

 \begin{figure}[t] \centering
   \subfigure[]{
  \includegraphics[height=5.3cm, width=5.3cm]{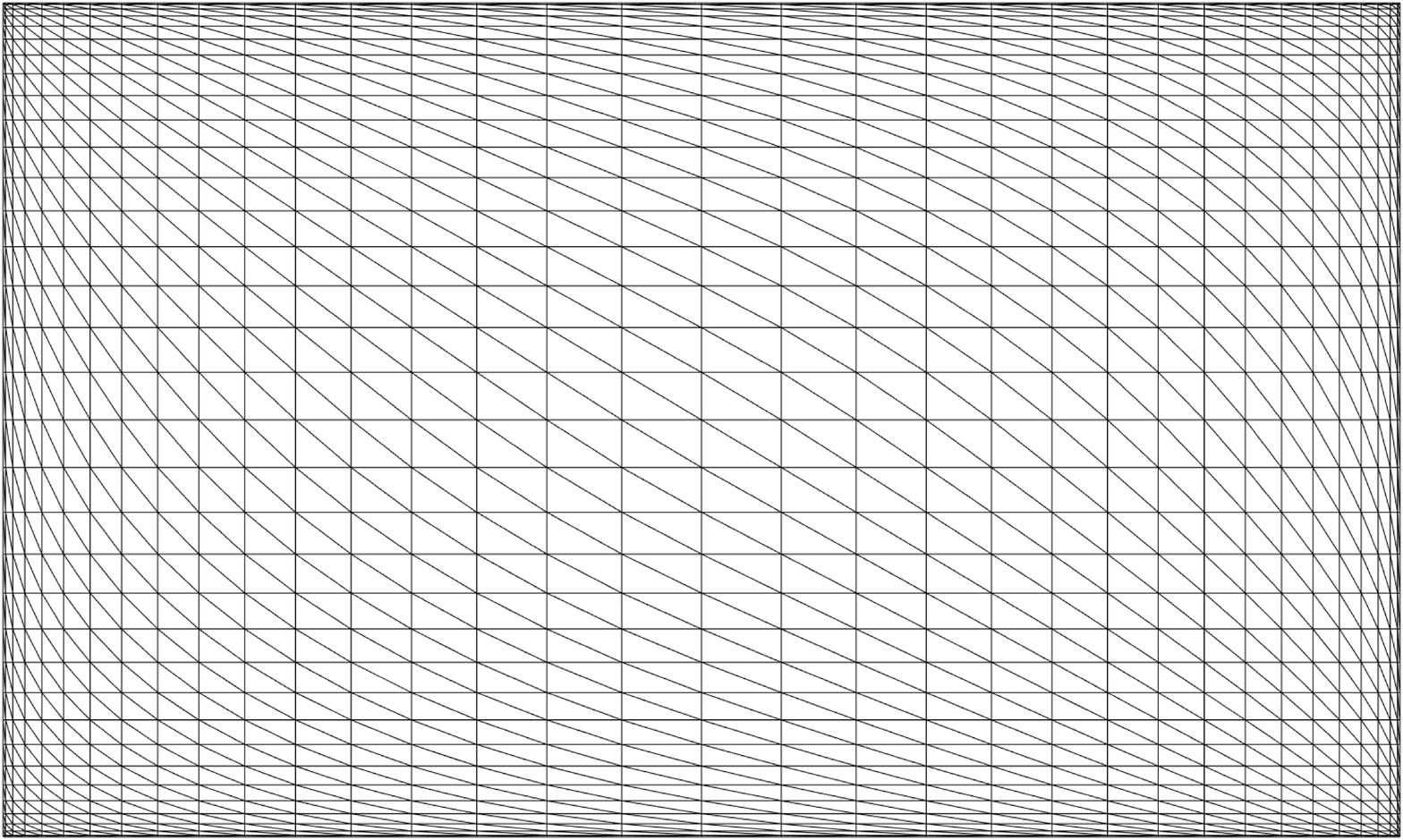}}
 \subfigure[]{
  \includegraphics[height=5.3cm, width=5.3cm]{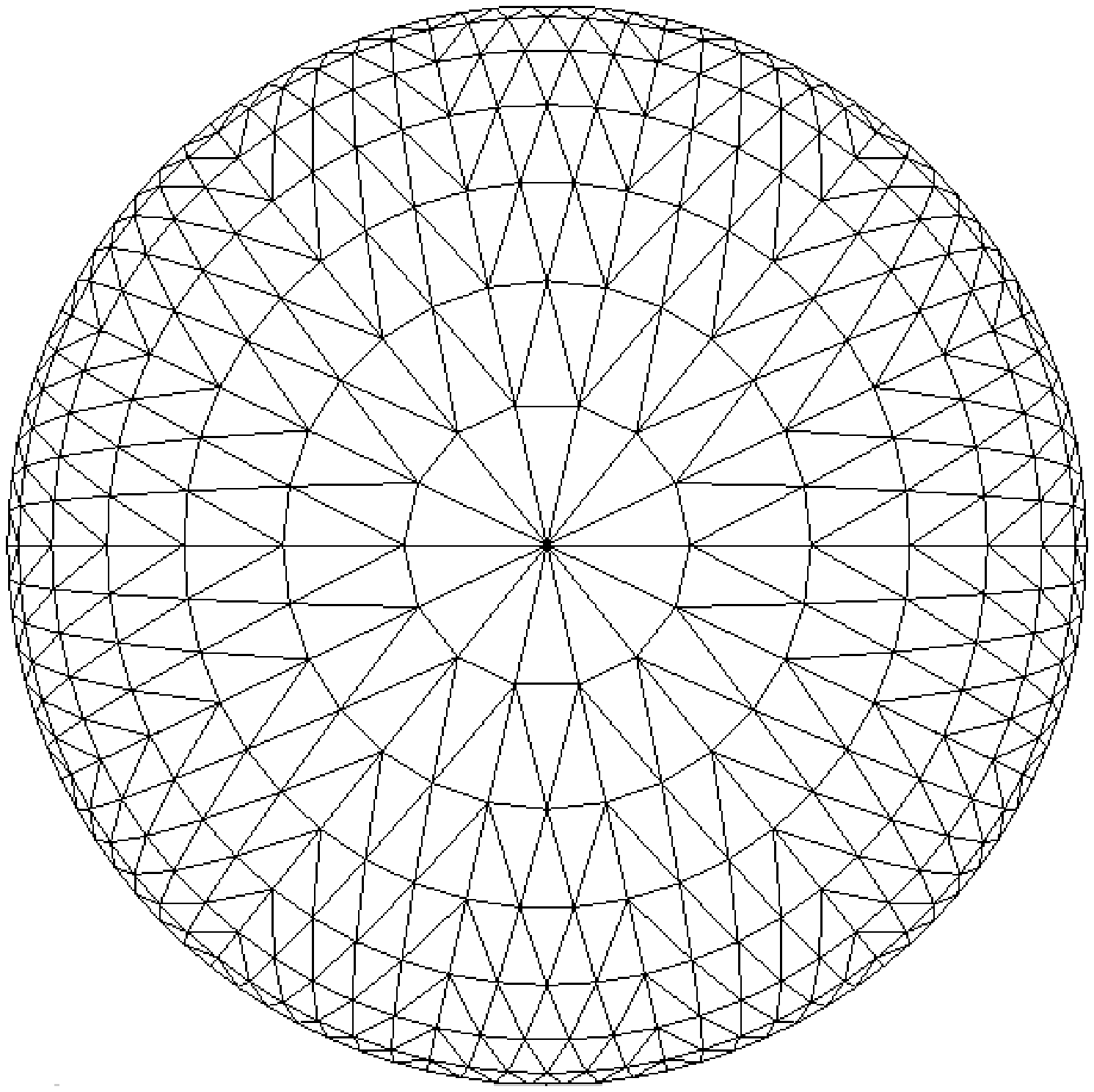}}
   \caption{$\beta$-graded meshes for (a) square and (b) circular screens, with $\beta=2$}\label{gradmesh}
 \end{figure}

While we use triangular meshes in our computations, for the ease of presentation we first discuss the approximation properties of graded meshes with rectangular elements. Reference \cite{disspetersdorff} shows how to deduce approximation results on triangular meshes from the rectangular case. 

Key ingredients in our analysis are projections from $L^2(\Gamma)$ onto $V_h^p$ on the graded mesh. We collect some  key approximation properties used below:

An analogon of \cite[Lemma 3.3]{disspetersdorff} reads:
\begin{lemma}\label{lemma3.3}
	Let $r\geq 0$, $0\leq s_1, s_2\leq 1$, $I_j = [0, h_j],\; f_2\in\widetilde{H}^{-s_2}(I_2)$, $f_1 \in \widetilde{H}^r_\sigma(\mathbb{R}^+, H^{-s_1}(I_1))$. Then there holds
	\begin{equation*}	
		\| f_1(t,x) f_2(y) \|_{r, -s_1 - s_2, I_1\times I_2, \ast} \leq \| f_1\|_{r, -s_1, I_1, \ast} \| f_2\|_{\tilde{H}^{-s_2}(I_2)} \ .
	\end{equation*}
\end{lemma}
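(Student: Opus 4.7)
\medskip

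\textbf{Proof plan.} I would reduce the space–time estimate to a parameter-dependent spatial estimate by Fourier transform in $t$, then reduce the spatial estimate to a pointwise inequality between symbols of the Sobolev norms.

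\emph{Step 1 (time Fourier transform).} By the definition of the norms $\|\cdot\|_{r,s,\Gamma,\ast}$, and because $f_2$ does not depend on $t$, one has $\widehat{f_1 f_2}(\omega,x,y) = \hat f_1(\omega,x)\, f_2(y)$. Hence
\[
\|f_1 f_2\|_{r,-s_1-s_2,I_1\times I_2,\ast}^2 = \int_{-\infty+i\sigma}^{+\infty+i\sigma} |\omega|^{2r}\, \|\hat f_1(\omega,\cdot)\, f_2\|_{-s_1-s_2,\omega,I_1\times I_2,\ast}^2\, d\omega,
\]
so it suffices to prove, for each fixed $\omega$ with $|\omega|\geq\sigma$,
\[
\|\hat f_1(\omega,\cdot)\, f_2\|_{-s_1-s_2,\omega,I_1\times I_2,\ast} \ \lesssim_\sigma\ \|\hat f_1(\omega,\cdot)\|_{-s_1,\omega,I_1,\ast}\,\|f_2\|_{\widetilde H^{-s_2}(I_2)}.
\]

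\emph{Step 2 (reduction to $\mathbb{R}^2$).} By definition $\|\cdot\|_{\cdot,\cdot,\ast}$ is the norm of the zero extension to the ambient manifold, and zero extension factors across the tensor product: $e_+(g_1\otimes g_2) = (e_+g_1)\otimes (e_+g_2)$. Since the intervals are contained in $\mathbb{R}$, I can use the global charts and the standard Fourier realization of the norms, avoiding the partition of unity. Setting $g_1 = e_+\hat f_1(\omega,\cdot)\in H^{-s_1}(\mathbb{R})$ and $g_2 = e_+ f_2\in H^{-s_2}(\mathbb{R})$, the goal becomes
\[
\int_{\mathbb{R}^2} (|\omega|^2+\xi_1^2+\xi_2^2)^{-s_1-s_2}\,|\hat g_1(\xi_1)|^2\,|\hat g_2(\xi_2)|^2\, d\xi_1\,d\xi_2 \ \lesssim_\sigma\ A\cdot B,
\]
with $A=\int(|\omega|^2+\xi_1^2)^{-s_1}|\hat g_1|^2\,d\xi_1$ and $B=\int(1+\xi_2^2)^{-s_2}|\hat g_2|^2\,d\xi_2$.

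\emph{Step 3 (pointwise symbol inequality).} The integrals separate as soon as one has a pointwise bound of the form
\[
(|\omega|^2+\xi_1^2+\xi_2^2)^{-s_1-s_2} \ \leq\ C_\sigma\, (|\omega|^2+\xi_1^2)^{-s_1}\,(1+\xi_2^2)^{-s_2},
\]
equivalently $(|\omega|^2+\xi_1^2)^{s_1}(1+\xi_2^2)^{s_2}\leq C_\sigma\,(|\omega|^2+\xi_1^2+\xi_2^2)^{s_1+s_2}$. This I would get from weighted AM--GM applied to $a=|\omega|^2+\xi_1^2$ and $b=\xi_2^2$ with exponents $s_1/(s_1+s_2),s_2/(s_1+s_2)$, giving $a^{s_1}b^{s_2}\leq C(a+b)^{s_1+s_2}$, and then absorbing the mismatch between $(1+\xi_2^2)^{s_2}$ and $\xi_2^{2s_2}$ using $1\leq \sigma^{-2}|\omega|^2 \leq \sigma^{-2}(|\omega|^2+\xi_1^2+\xi_2^2)$ on the region $\{|\xi_2|\leq 1\}$. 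The hypothesis $|\omega|\geq \sigma>0$ enters precisely here. Plugging this into the integral, separating variables, and taking square roots gives the pointwise-in-$\omega$ estimate of Step 1.

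\emph{Step 4 (reassembly).} Substituting back and integrating in $\omega$ with weight $|\omega|^{2r}$, the right-hand side factors: the $\xi_2$-integral becomes $\|f_2\|_{\widetilde H^{-s_2}(I_2)}^2$ (an $\omega$-independent constant that can be pulled out of the $\omega$-integral), and the remaining $\omega$-integral is exactly $\|f_1\|_{r,-s_1,I_1,\ast}^2$. Square-rooting gives the claim.

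The only real obstacle is the symbol inequality in Step 3 when $|\omega|$ is small and $\xi_2$ is close to $0$; the condition $\sigma>0$ (tacit in the definition of $H^r_\sigma$) is what rescues it, and it also explains why the constant $C_\sigma$ is finite but $\sigma$-dependent. The rest is bookkeeping with Plancherel and Fubini.
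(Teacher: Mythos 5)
Your proposal is correct and follows essentially the same route as the paper: the paper's proof consists precisely of the pointwise Fourier-symbol inequality $(\sigma^2+|\omega|^2+\xi_1^2+\xi_2^2)^{-(s_1+s_2)/2}\lesssim(\sigma^2+|\omega|^2+\xi_1^2)^{-s_1/2}(1+\xi_2^2)^{-s_2/2}$, which is your Step 3, with the Plancherel/Fubini reduction (your Steps 1, 2 and 4) left implicit. Your handling of the region $|\xi_2|\leq 1$ via $1\leq\sigma^{-2}|\omega|^2$ is exactly where the $\sigma$-dependence enters, consistent with the paper's implicit constant in $\lesssim$.
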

\begin{proof}
This is a consequence of the estimate $$(\sigma^2+|\omega|^2 + \xi_1^2 + \xi_2^2)^{-(s_1+s_2)/2} \lesssim (\sigma^2+|\omega|^2 + \xi_1^2 )^{-s_1/2} (1+ \xi_2^2 )^{-s_2/2}\ $$
in Fourier space.
\end{proof}
We have a similar result for positive Sobolev indices:
\begin{lemma}\label{lemma3.3a}
	Let $r\geq 0$, $0\leq s\leq 1$, $I_j = [0, h_j],\; f_2\in\widetilde{H}^{s}(I_2)$, $f_1 \in {H}^r_\sigma(\mathbb{R}^+, \widetilde{H}^{s}(I_1))$. Then there holds
	\begin{equation*}	
		\| f_1(t,x) f_2(y) \|_{r, s, I_1\times I_2, \ast} \leq \| f_1\|_{r, s, I_1, \ast} \| f_2\|_{\tilde{H}^{s}(I_2)}\ .
	\end{equation*}
\end{lemma}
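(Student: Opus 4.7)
The approach mirrors the proof of Lemma~\ref{lemma3.3} but uses the Fourier-space inequality appropriate for \emph{positive} Sobolev indices. Since $e_+$ commutes with tensorization in disjoint variables, the extension by zero of the separable function factors as $e_+(f_1(t,x)f_2(y)) = (e_+ f_1)(t,x)\,(e_+ f_2)(y)$. Taking a joint Fourier transform in $t$ (with $\omega=\xi_0+i\sigma$, $\xi_0\in\R$) and in $x,y$ gives the product identity $\widehat{f_1 f_2}(\omega,\xi_1,\xi_2)=\hat f_1(\omega,\xi_1)\,\hat f_2(\xi_2)$, and substituting into the definition of the $\ast$-norm yields
\[
\|f_1 f_2\|_{r,s,I_1\times I_2,\ast}^2 \;=\; \int |\omega|^{2r}(\sigma^2+|\omega|^2+\xi_1^2+\xi_2^2)^s\,|\hat f_1(\omega,\xi_1)|^2\,|\hat f_2(\xi_2)|^2\, d\omega\, d\xi_1\, d\xi_2\ .
\]

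The core step is the pointwise Fourier-space estimate
\[
(\sigma^2+|\omega|^2+\xi_1^2+\xi_2^2)^s \;\lesssim\; (\sigma^2+|\omega|^2+\xi_1^2)^s\,(1+\xi_2^2)^s, \qquad s\in[0,1]\ ,
\]
with implicit constant depending on $\sigma$; this is the reverse of the inequality used in Lemma~\ref{lemma3.3}. To verify it, set $A=\sigma^2+|\omega|^2+\xi_1^2\ge\sigma^2>0$ and $B=\xi_2^2\ge 0$, so that $A+B = A(1+B/A)\le \max(1,\sigma^{-2})\,A\,(1+B)$; raising to the $s$-th power gives the claim since $s\ge 0$.

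Inserting the pointwise bound into the integral and applying Fubini decouples $(\omega,\xi_1)$ from $\xi_2$, producing exactly the product $\|f_1\|_{r,s,I_1,\ast}^2\,\|f_2\|_{\widetilde H^s(I_2)}^2$, after which the result follows by taking square roots. The one delicate point—and what distinguishes the situation from Lemma~\ref{lemma3.3}—is that for positive $s$ the elementary inequality $(A+B)^s\lesssim A^s(1+B)^s$ requires $A$ to be uniformly bounded below; the lower bound $A\ge\sigma^2>0$ encoded in the weighted spaces $H^r_\sigma$ is therefore essential. Without the shift $\sigma>0$ the inequality would fail uniformly as $(\omega,\xi_1)\to 0$, and this is why the implicit constant must depend on $\sigma$.
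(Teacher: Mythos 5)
Your proof is correct and takes essentially the same approach as the paper, whose one-line argument likewise reduces the lemma to the Fourier-space estimate $(\sigma^2+|\omega|^2+\xi_1^2+\xi_2^2)^{s/2}\lesssim(\sigma^2+|\omega|^2+\xi_1^2)^{s/2}(1+\xi_2^2)^{s/2}$ applied to the tensorized extension by zero. Your explicit verification of the pointwise bound (using $A\geq\sigma^2$) and the Fubini step merely fill in details the paper leaves implicit.
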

\begin{proof}
This is a consequence of the estimate $$(\sigma^2+|\omega|^2 + \xi_1^2 + \xi_2^2)^{s/2} \lesssim (\sigma^2+|\omega|^2 + \xi_1^2 )^{s/2} (1+ \xi_2^2 )^{s/2}$$ in Fourier space.
\end{proof}

Next we approximate $\tilde{H}^s$-functions on rectangles by constants, as in \cite[Lemma 3.4]{disspetersdorff}. The proof is a combination with \cite[Proposition 3.54 and 3.57]{glaefke}, see also \cite{HGEPSN} for screens. The formulation localizes from $\mathbb{R}^+$ to a single time interval $[0,\Delta t]$, and uses the restriction $H^{r}_\sigma([0,\Delta t], H^s(R))$ of $H^{r}_\sigma(\mathbb{R}^+, H^s(R))$.
\begin{lemma}\label{lemma3.4}
	Let $-1\leq s\leq 0,\; {0\leq r\leq \rho \leq p+1},\; R=[0,h_1]\times[0,h_2],\; u\in H^{\rho}_\sigma([0,\Delta t], H^1(R))$, $\Pi_t^{{p}} u$ the orthogonal projection onto piecewise polynomials in $t$ of order {$p$}, $\Pi_{x,y}^0 u=\frac{1}{h_1 h_2}\int\limits_R u(t,x,y)  dy\, dx$. Then for ${U} =  \Pi_t^{{p}} \Pi^0_{x,y} u$ we have 
	\begin{align}\label{3.22}
		\| u-{U}\|_{r,s,R,\ast}&\lesssim (\Delta t)^{\rho-r}{\max\{h_1,h_2,\Delta t \}^{-s}}\|\partial_t^\rho u\|_{L^2([0,\Delta t]\times R)} \\ & +\qquad  \max\{h_1,h_2,\Delta t \}^{-s}\left(h_1 \| u_x\|_{L^2([0,\Delta t] \times R)}  + h_2 \| u_y\|_{L^2([0,\Delta t] \times R)} \right)\ . \nonumber
	\end{align}
	If $u(t,x,y) =u_1(t,x)u_2(y),\; u_1\in H^{\rho}_\sigma( [0,\Delta t], H^1([0,h_1])), \; u_2\in H^1( [0,h_2])$ then 
	\begin{align*}
		\| u-{U}\|_{r,s,R,\ast}& \lesssim (\Delta t)^{\rho-r}{\max\{h_1,\Delta t \}^{-s}}\|\partial_t^\rho u\|_{L^2([0,\Delta t]\times R)} \\ & \qquad+\left(h_1^{1-s}\| u_x\|_{{L^2}([0,\Delta t] \times R)} + h_2^{1-s}\| u_y\|_{{L^2}([0,\Delta t] \times R)} \right)\ .
	\end{align*}
\end{lemma}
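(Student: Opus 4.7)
The approach is to decompose the approximation error as
\[
u - U = (I - \Pi_t^p) u + \Pi_t^p (I - \Pi_{x,y}^0) u,
\]
which separates the temporal and spatial contributions, and bound each summand in the anisotropic norm $\|\cdot\|_{r,s,R,\ast}$. A standard Bramble--Hilbert argument on the interval $[0,\Delta t]$, valid since $\rho\le p+1$, controls $(I - \Pi_t^p)u$ in $L^2$,
\[
\|\partial_t^k (I - \Pi_t^p) u\|_{L^2([0,\Delta t]\times R)} \lesssim (\Delta t)^{\rho-k}\|\partial_t^\rho u\|_{L^2([0,\Delta t]\times R)},\qquad 0\le k\le\rho,
\]
while the Poincar\'e inequality on the rectangle $R$ controls the spatial error,
\[
\|(I - \Pi_{x,y}^0)u(t,\cdot)\|_{L^2(R)} \lesssim h_1\|u_x(t,\cdot)\|_{L^2(R)} + h_2\|u_y(t,\cdot)\|_{L^2(R)}.
\]

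The core technical ingredient is a support-based negative-norm estimate: for $f$ supported in $[0,\Delta t]\times R$ and $s\in[-1,0]$,
\[
\|f\|_{r,s,R,\ast} \;\lesssim\; \max\{h_1,h_2,\Delta t\}^{-s}\;\|f\|_{r,0,R,\ast}.
\]
I would prove this by extending $f$ by zero, passing to Fourier in space-time, and arguing that the anisotropic weight $(|\omega|^2+|\xi|^2)^{s/2}$ can be replaced by $D^{-s}$ with $D := \max\{h_1,h_2,\Delta t\}$ up to a bounded constant, for functions supported on a space-time cylinder of diameter $D$; equivalent arguments use the dual characterisation of $\widetilde{H}^s$ together with a Poincar\'e inequality on the cylinder, or a scaling reduction to the unit cube. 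This is the space-time analogue of the negative-norm bounds underlying Propositions 3.54 and 3.57 of Gl\"afke's thesis. Applying this to both $(I-\Pi_t^p)u$ and $\Pi_t^p(I-\Pi_{x,y}^0)u$ and combining with the two $L^2$-bounds above (using boundedness of $\Pi_t^p$ in the relevant norms) produces \eqref{3.22}.

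For the tensor-product case $u(t,x,y)=u_1(t,x)u_2(y)$ I would invoke Lemma~\ref{lemma3.3a} (or Lemma~\ref{lemma3.3}, depending on the sign of the relevant Sobolev index) to decouple the spatial norm over $[0,h_1]\times[0,h_2]$ into factors over $[0,h_1]$ and $[0,h_2]$. The negative-norm scaling then applies separately in each factor and only involves the corresponding interval length, so that $\max\{h_1,h_2,\Delta t\}^{-s}$ in front of the two Poincar\'e terms is replaced by $h_1^{-s}$ and $h_2^{-s}$ respectively, yielding the sharper bound $h_1^{1-s}\|u_x\|+h_2^{1-s}\|u_y\|$.

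The hard part is justifying the $\Delta t$-contribution inside the maximum. Although $s$ is nominally a spatial Sobolev index, the weight $(|\omega|^2+|\xi|^2)^{s/2}$ symmetrically couples the time frequency $\omega$ and the spatial frequencies $\xi$, so temporal localisation on scale $\Delta t$ affects the $s$-scaling exactly as spatial localisation does. One must also be careful not to introduce unwanted $(\Delta t)^{-r}$ factors from inverse estimates on the piecewise polynomial $\Pi_t^p(I-\Pi_{x,y}^0)u$; this is handled by exploiting the $H^r_\sigma$-mapping properties of $\Pi_t^p$ on the single interval $[0,\Delta t]$, where the assumption $u\in H^\rho_\sigma$ with $r\le\rho$ provides the needed temporal regularity directly.
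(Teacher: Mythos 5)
Your overall splitting $u-U=(I-\Pi_t^p)u+\Pi_t^p(I-\Pi^0_{x,y})u$ together with Bramble--Hilbert in time and Poincar\'e in space is fine, and for the main estimate \eqref{3.22} your mechanism for converting the $L^2$ bound into the negative-order bound genuinely differs from the paper's. The paper argues by duality: since $U$ is an orthogonal projection, $u-U$ is $L^2$-orthogonal to constants, so in $\sup_v \langle u-U,v\rangle/\|v\|_{0,1,R}$ one may replace $v$ by $v-Z$ and bound $\|v-Z\|_{0,0,R}\lesssim \max\{h_1,h_2,\Delta t\}\,\|v\|_{0,1,R}$ by the same projection estimate applied to the test function; the general $(r,s)$ case is then obtained by interpolation. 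You instead invoke a generic support-based localization, $\|f\|_{r,s,R,\ast}\lesssim \max\{h_1,h_2,\Delta t\}^{-s}\|f\|_{r,0,R,\ast}$ for any $f$ supported in the space-time cylinder, with no orthogonality. For the case the paper actually writes out ($r=0$, $s=-1$, space-time dimension $1+2=3$) this generic estimate is indeed true -- by duality, H\"older and the embedding $H^1(\mathbb{R}^3)\hookrightarrow L^6$ one gets $\|v\|_{L^2(Q)}\lesssim |Q|^{1/3}\|v\|_{H^1}\lesssim\max\{h_1,h_2,\Delta t\}\|v\|_{H^1}$ -- so your route does yield \eqref{3.22}, and it is arguably more self-contained since it never uses that $u-U$ has zero mean. (Your alternative justification ``dual characterisation together with a Poincar\'e inequality on the cylinder'' is not available as stated, though: the test function $v$ has neither zero mean nor zero trace, so Poincar\'e for $v$ requires exactly the mean subtraction that only the orthogonality of $u-U$ licenses.)

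The genuine gap is in the tensor-product refinement. There you claim that ``the negative-norm scaling applies separately in each factor and only involves the corresponding interval length,'' i.e.\ you want $\|g\|_{\widetilde H^{s}([0,h_2])}\lesssim h_2^{-s}\|g\|_{L^2}$ for the one-dimensional factor. For a generic function supported in $[0,h]$ this is false once $-s>\tfrac12$: for $g=\mathbf 1_{[0,h]}$ one has $\|g\|_{H^{-1}(\mathbb{R})}\sim h=h^{1/2}\|g\|_{L^2}$, not $h\,\|g\|_{L^2}$, and the Sobolev/H\"older mechanism that saved you in the three-dimensional cylinder only gives the exponent $\min\{-s,\tfrac12\}$ in one dimension. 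So the sharp factors $h_1^{1-s}$, $h_2^{1-s}$ cannot come from support alone; you must use that the factor errors $u_2-\Pi^0_y u_2$ (and $u_1-\Pi_t^p\Pi_x^0u_1$) are orthogonal to constants, and then run the duality argument with the constant subtracted from the test function -- which is precisely the step at the heart of the paper's proof, applied there to the factorization $U=U_1U_2$ together with Lemma \ref{lemma3.3}. Adding that orthogonality observation (and stating how the $H^r_\sigma$-in-time dependence $(\Delta t)^{\rho-r}$ is recovered, which you, like the paper, leave to interpolation) would close the argument.
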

{\begin{proof}
As the proof is similar to the time-independent case, \cite[Lemma 3.4]{disspetersdorff}, we only show \eqref{3.22} for $r=0$, $s=-1$. First note that 
\begin{equation}\label{auxineq} \| u-{U}\|_{0,0,R,\ast} \lesssim (\Delta t) \|\partial_t u\|_{0,0,R}+ h_1 \|u_x\|_{0,0,R}+ h_2 \|u_y\|_{0,0,R} \ .\end{equation}
By the Hahn-Banach theorem  we have   
\begin{align*}
\| u-{U}\|_{0,-1,R,\ast} & = \sup_{v \in H^0_\sigma([0,\Delta t], H^1(R))} \frac{|\langle u-U , v\rangle|}{\|v\|_{0,1,R}}\\& = \sup_{v \in H^0_\sigma([0,\Delta t], H^1(R))} \frac{|\langle u-U , v-Z\rangle|}{\|v\|_{0,1,R}} \\& \leq \|u-U \|_{0,0,R}  \sup_{v \in H^0_\sigma([0,\Delta t], H^1(R))} \frac{\|v-Z\|_{0,0,R}}{\|v\|_{0,1,R}} \ ,
\end{align*}
for  any constant $Z$. Using \eqref{auxineq} on the right hand side, we obtain 
\begin{align*}\| u-{U}\|_{0,-1,R,\ast} &\lesssim \left((\Delta t) \|\partial_t u\|_{0,0,R}+ h_1 \|u_x\|_{0,0,R}+ h_2 \|u_y\|_{0,0,R}\right)\\ & \qquad \ \ \ \sup_{v \in H^0_\sigma([0,\Delta t], H^1(R))} \frac{(\Delta t) \|\partial_t v\|_{0,0,R}+ h_1 \|v_x\|_{0,0,R}+ h_2 \|v_y\|_{0,0,R}}{\|v\|_{0,1,R}} \\ & \lesssim \left((\Delta t) \|\partial_t u\|_{0,0,R}+ h_1 \|u_x\|_{0,0,R}+ h_2 \|u_y\|_{0,0,R}\right) \max\{h_1,h_2,\Delta t \} \ .
\end{align*}
The general case of \eqref{3.22} follows by interpolation and by using the higher smoothness in $t$.\\
The proof of the second inequality applies these arguments and Lemma \ref{lemma3.3} to the factorization $u - U = (u_1 - U_1) (u_2-U_2)$. Here $U = U_1 U_2$, with $U_1 = \Pi_t^p \Pi_x^0 u_1$ and $U_2 = \Pi_y^0 u_2$. 
\end{proof}}

An analogous result holds for bilinear interpolants on rectangles, as in \cite[Lemma 3.14]{disspetersdorff}.
\begin{lemma}\label{TobiasTRACE}
Let $Q = [0,h_1]\times [0,h_2],u\in H^3_\sigma([0,\Delta t]\times Q)$, ${U}$ the bilinear interpolant of $u$ at the vertices of $Q$. Then there holds for {$r \geq 0$}
\begin{align}
	\| u-{U}\|_{r,0,[0,\Delta t)\times Q} &\lesssim\max\{h_1, \Delta t\}^2\| u_{xx}\|_{r,0,[0,\Delta t)\times Q} + \max\{h_2, \Delta t\}^2\| u_{yy}\|_{r,0,[0,\Delta t)\times Q}\nonumber \\  & \qquad +  ( \max\{h_1, \Delta t\}^2+ \max\{h_2, \Delta t\}^2 )\| u_{tt}\|_{r,0,[0,\Delta t)\times Q} \nonumber \\ & \qquad + \max\{h_1, \Delta t\}^2 \max\{h_2, \Delta t\}\| u_{xxy}\|_{r,0,[0,\Delta t)\times Q} \ , \label{bilinearinterpolant2D1} \\
	\|(u-{U})_x\|_{r,0,[0,\Delta t)\times Q} &\lesssim\max\{h_1, \Delta t\}\| u_{xx}\|_{r,0,[0,\Delta t)\times Q} + \max\{h_1, \Delta t\}\| u_{xt}\|_{r,0,[0,\Delta t)\times Q} \nonumber \\ &\qquad+ \max\{h_2, \Delta t\}^2\| u_{xyy}\|_{L^2(Q)} \ . \label{bilinearinterpolant2D2}
\end{align}
\end{lemma}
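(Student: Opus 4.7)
The proof extends the classical 2D bilinear interpolation estimate of \cite[Lemma 3.14]{disspetersdorff} to the space-time anisotropic setting. The key structural observation is that $U(t,\cdot,\cdot)$ is, for every fixed $t$, the bilinear interpolant of $u(t,\cdot,\cdot)$ at the four spatial vertices of $Q$, so the classical spatial argument can be applied pointwise in $t$ and the temporal norm brought in afterwards.

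For \eqref{bilinearinterpolant2D1} with $r=0$, I would first establish the Bramble--Hilbert-type estimate
\[
\|u(t,\cdot) - U(t,\cdot)\|^2_{L^2(Q)} \lesssim h_1^4\|u_{xx}(t,\cdot)\|^2_{L^2(Q)} + h_2^4\|u_{yy}(t,\cdot)\|^2_{L^2(Q)} + h_1^4 h_2^2\|u_{xxy}(t,\cdot)\|^2_{L^2(Q)}
\]
by rescaling the reference-element estimate, exactly as in the time-independent proof. Integration in $t \in [0,\Delta t)$ yields the $r=0$ version with $h_i$ in place of $\max\{h_i,\Delta t\}$ (and with no $u_{tt}$ term); since $h_i \leq \max\{h_i,\Delta t\}$, the weaker form is then immediate. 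The additional contribution $(\max\{h_1,\Delta t\}^2 + \max\{h_2,\Delta t\}^2)\|u_{tt}\|$ is produced by comparing $U$ with the full trilinear interpolant $\widetilde U$ of $u$ on the space-time box $[0,\Delta t)\times Q$: the temporal error $\|U-\widetilde U\|_{L^2}$ is controlled by $\Delta t^2\|u_{tt}\|_{L^2}$, and inserting and subtracting $\widetilde U$ in the triangle inequality is the standard device for converting a purely spatial interpolant into a uniform space-time bound. For general $r\geq 0$, $\partial_t$ commutes with the spatial bilinear interpolation, so the estimate propagates to $\partial_t^k u$ for integer $k$; fractional $r$ is then handled by Plancherel in the time Fourier variable $\omega$, in complete analogy with the proofs of Lemmas \ref{lemma3.3} and \ref{lemma3.3a}.

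The derivative estimate \eqref{bilinearinterpolant2D2} is proved along the same lines, replacing the Bramble--Hilbert estimate by its one-order-weaker version for $(u-U)_x$. One loses one order in the $x$-direction, producing the dominant $\max\{h_1,\Delta t\}\|u_{xx}\|$ term; the contribution $\max\{h_2,\Delta t\}^2\|u_{xyy}\|$ arises from the remaining bilinear interpolation in $y$ applied to $u_x$; and $\max\{h_1,\Delta t\}\|u_{xt}\|$ appears from the trilinear-interpolant comparison step.

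The main obstacle is the bookkeeping of the $\max\{h_i,\Delta t\}$ factors together with the mixed $u_{tt}$, $u_{xt}$, $u_{xxy}$, $u_{xyy}$ contributions: one must carefully pass $\partial_t^r$ through the spatial interpolant (trivial for integer $r$, but requiring Plancherel/interpolation for fractional $r$) and track every mixed derivative generated by the trilinear comparison, all while keeping constants uniform in $h_1,h_2,\Delta t$ and independent of $\sigma$. Once that is organized on the reference element, the bounds \eqref{bilinearinterpolant2D1} and \eqref{bilinearinterpolant2D2} follow by rescaling and integration in $t$.
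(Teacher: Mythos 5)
First, a point of reference: the paper itself gives no proof of Lemma \ref{TobiasTRACE}. It is stated with the remark ``an analogous result holds \dots as in \cite[Lemma 3.14]{disspetersdorff}'', i.e.\ the authors simply assert the space-time analogue of the classical elliptic bilinear-interpolation estimate. Your main line -- freeze $t$ (or, equivalently and more cleanly, pass to the Fourier--Laplace variable $\omega$ and apply the elliptic estimate to $\hat u(\omega)$ for each $\omega$ with $\mathrm{Im}\,\omega=\sigma$, then use Plancherel) -- is exactly the extension the authors have in mind, and under the literal reading of the statement (for each $t$, $U(t,\cdot,\cdot)$ interpolates $u(t,\cdot,\cdot)$ at the four vertices of $Q$) it already proves \eqref{bilinearinterpolant2D1} and \eqref{bilinearinterpolant2D2} in the stronger form with $h_i$ in place of $\max\{h_i,\Delta t\}$ and without the $u_{tt}$, $u_{xt}$ terms; the stated bounds follow a fortiori. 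Note also that the Fourier-side argument handles all $r\ge 0$ at once, so the integer/fractional case distinction you make is unnecessary, and the only genuine technicality left is that $\|\cdot\|_{r,0,[0,\Delta t)\times Q}$ is a restriction norm, so one should argue via extensions (a point the paper itself glosses over in Lemma \ref{lemma3.4} as well).

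The step I would not accept as written is the trilinear-comparison device. If $U$ is the purely spatial interpolant, the comparison with $\widetilde U$ proves nothing about $u-U$ and should simply be dropped. If instead you intend $U$ to be the full space-time interpolant (which is how the lemma is actually invoked later, where $U$ is ``the interpolant of $\check v$ in space and time''), then the comparison is where the real work sits, and the claim $\|U-\widetilde U\|_{L^2}\lesssim \Delta t^2\|u_{tt}\|_{L^2}$ is not justified: writing $\widetilde U=\Pi_t^1 U$ gives $U-\widetilde U=(I-\Pi_t^1)U$ and $\partial_t^2 U=\Pi^1_{x,y}u_{tt}$, but the vertex-interpolation operator $\Pi^1_{x,y}$ is not $L^2$-stable (it requires point values), so $\|\Pi^1_{x,y}u_{tt}\|_{L^2}$ cannot be bounded by $\|u_{tt}\|_{L^2}$; any repair via trace or embedding inequalities produces mixed derivatives such as $u_{ttx}$, $u_{tty}$, or even $u_{ttxy}$, which exceed the $H^3_\sigma$ regularity assumed and do not appear on the right-hand side of \eqref{bilinearinterpolant2D1}. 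The workable route in the space-time case is the splitting $u-\Pi_t\Pi_{x,y}u=(u-\Pi_t u)+\Pi_t(u-\Pi_{x,y}u)$: the first term needs no spatial traces and yields $\Delta t^2\|u_{tt}\|$, while the stability of $\Pi_t$ only requires time traces and, combined with a first-order spatial interpolation estimate for $u_t$, produces third-order mixed terms ($u_{xxt}$, $u_{txy}$, \dots) that must then be absorbed into the $\max\{h_i,\Delta t\}$ factors or into a full $H^3$-type bound, as the paper effectively does in its application. So: your proof is sound and matches the paper's intent for the lemma as literally stated, but the paragraph generating the $u_{tt}$ and $u_{xt}$ contributions is either superfluous or, in the reading where those terms matter, contains a genuine gap that needs the alternative splitting and more careful bookkeeping.
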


The proofs of the following results are given in  \cite[Satz 3.7, Satz 3.10]{disspetersdorff}.
\begin{lemma}\label{keylemmagrad}For $a>0$ and $s \in [-1, -a+\frac{1}{2})$ there holds with the piecewise constant interpolant $ \Pi_{y}^{0} y^{-a}$ of $ y^{-a}$ {on the $\beta$-graded mesh}
$$\|y^{-a} - \Pi_{y}^{0} y^{-a}\|_{\widetilde{H}^{s}([0,1])} \lesssim  h^{\min\{\beta(-a-s+\frac{1}{2}), {1}-s\}{-\varepsilon}} . $$\ 
\end{lemma}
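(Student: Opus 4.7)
\textbf{Proof plan for Lemma \ref{keylemmagrad}.} The strategy follows the approach of \cite[Satz 3.7]{disspetersdorff} for the elliptic case. The plan is to partition $[0,1]$ according to the $\beta$-graded mesh nodes $y_k = (k/N)^\beta$, $k=0,\dots,N$ (with $h \sim 1/N$), localize the $\widetilde{H}^s$-norm via Lemma \ref{lemma3.2} \eqref{3.21b},
$$\|y^{-a} - \Pi_y^0 y^{-a}\|_{\widetilde{H}^s([0,1])}^2 \;\leq\; \sum_{k=1}^N \|y^{-a} - \Pi_y^0 y^{-a}\|_{\widetilde{H}^s(I_k),\ast}^2,$$
and estimate each local contribution on $I_k = [y_{k-1}, y_k]$ separately. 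The singular element $I_1$ will require direct treatment, while the elements $I_k$ with $k\geq 2$ admit a standard Bramble--Hilbert estimate.

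On the singular element $I_1 = [0, h^\beta]$ the interpolant is taken to be $0$, and one estimates $\|y^{-a}\|_{\widetilde{H}^s(I_1),\ast}$ directly. A change of variables $y = h^\beta z$ with $z\in[0,1]$ combined with the Fourier-side scaling of the extension-by-zero norm on a half-line gives
$$\|y^{-a}\|_{\widetilde{H}^s(I_1),\ast} \;\sim\; h^{\beta(\frac{1}{2}-s-a)}\, \|z^{-a}\|_{\widetilde{H}^s([0,1]),\ast},$$
and the reference norm on the right is finite precisely under the hypothesis $s < \frac{1}{2}-a$. On each regular element $I_k$, $k\geq 2$, the function $y^{-a}$ is smooth, and the piecewise-constant approximation obeys
$$\|y^{-a} - \Pi_y^0 y^{-a}\|_{\widetilde{H}^s(I_k),\ast} \;\lesssim\; h_k^{1-s}\, \|(y^{-a})'\|_{L^2(I_k)} \;\sim\; h_k^{\frac{3}{2}-s}\,(k/N)^{-\beta(a+1)},$$
using $h_k \sim \beta k^{\beta-1}/N^\beta$ and $y \sim (k/N)^\beta$ on $I_k$.

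Substituting these estimates, the squared contribution from $I_k$ behaves like $N^{2\beta(s+a-\frac{1}{2})} k^{2(\beta-1)(\frac{3}{2}-s) - 2\beta(a+1)}$. Two regimes then emerge when summing over $k = 2, \dots, N$: if $\beta(\frac{1}{2}-s-a) < 1-s$, the $k$-sum converges and the aggregate error is $h^{\beta(\frac{1}{2}-s-a)}$, matching the contribution from $I_1$; if $\beta(\frac{1}{2}-s-a) > 1-s$, the mesh is strongly graded enough that $I_1$ no longer dominates, the $k$-sum grows with $N$, and the $N$-powers combine to yield the saturated rate $h^{1-s}$ expected on a uniform mesh for smooth data. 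Taking the minimum of the two exponents reproduces the bound $h^{\min\{\beta(-a-s+\frac{1}{2}),\, 1-s\}}$, and the logarithmic factor appearing at the borderline $\beta(\frac{1}{2}-s-a) = 1-s$ is absorbed into the $-\varepsilon$.

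The main technical obstacle is the treatment of $I_1$: for $a \geq \frac{1}{2}$ the function $y^{-a}$ is not in $L^2$, so one cannot invoke $L^2$-projection or standard nodal interpolants. Instead, one works directly with the extension-by-zero norm and its scaling, carefully tracking that the hypothesis $s\in[-1,-a+\frac{1}{2})$ is exactly what is needed to make $\|z^{-a}\|_{\widetilde{H}^s([0,1]),\ast}$ finite and hence to justify the homogeneity computation. This is also the place where the upper bound $s<-a+\frac{1}{2}$ in the statement originates.
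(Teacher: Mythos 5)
Your proposal is correct and follows essentially the same route as the paper, which does not prove the lemma itself but cites von Petersdorff's Satz 3.7: localization of the $\widetilde{H}^{s}$-norm over the mesh elements, a scaling argument on the first (singular) element where the hypothesis $s<\frac{1}{2}-a$ guarantees finiteness of the reference norm, an elementwise estimate $\lesssim h_k^{1-s}\|(y^{-a})'\|_{L^2(I_k)}$ on the regular elements, and the two summation regimes that produce exactly the exponents $\beta(\frac{1}{2}-s-a)$ and $1-s$. The only remaining points are minor technicalities (the non-homogeneity of the $\widetilde{H}^{s}$-norm under the rescaling of $I_1$, in particular for $s\leq-\frac{1}{2}$, and the subrange $0<s<\frac{1}{2}$ of the elementwise bound), all of which are absorbed by the $-\varepsilon$ in the statement.
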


\begin{lemma}\label{keylemmagrad2}For $a>0$ and $s \in [0, a+\frac{1}{2})$ there holds with the linear interpolant $ \Pi_{y}^{1} y^{a}$ of $ y^{a}$ {on the $\beta$-graded mesh}
$$\|y^{a} - \Pi_{y}^{1} y^a\|_{\widetilde{H}^{s}([0,1])} \lesssim  h^{\min\{\beta(a-s+\frac{1}{2}), 2-s\}{-\varepsilon}} . $$
\end{lemma}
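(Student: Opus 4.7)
The plan is to decompose $[0,1]$ along the $\beta$-graded mesh $y_k = (k/N)^\beta$, where $h = 1/N$, and estimate the interpolation error separately on the first interval $I_1 = [0, N^{-\beta}]$, where the singularity at the origin lives, and on the smooth intervals $I_k = [y_{k-1}, y_k]$ for $k \geq 2$. Because $y^a$ and its piecewise linear interpolant $\Pi_y^1 y^a$ agree at $y=0$ and at every interior node, the error $e(y) = y^a - \Pi_y^1 y^a$ extends by zero across each sub-interval boundary to a function in $\widetilde{H}^s$. This lets me apply the time-independent analog of Lemma \ref{lemma3.2} to bound
\[
\| y^a - \Pi_y^1 y^a \|_{\widetilde{H}^s([0,1])}^2 \;\leq\; \sum_{k=1}^N \|e\|_{\widetilde{H}^s(I_k),\ast}^2.
\]

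On the first interval $I_1 = [0,y_1]$ with $y_1 = N^{-\beta}$, the interpolant is the linear polynomial joining $(0,0)$ and $(y_1,y_1^a)$, so $e(y) = y_1^a\bigl(\tilde e(y/y_1)\bigr)$ with $\tilde e(z) = z^a - z$ on $[0,1]$. The standard Fourier scaling of the $\widetilde{H}^s$-norm yields $\|e\|_{\widetilde{H}^s(I_1),\ast} = y_1^{a+\frac12-s}\,\|\tilde e\|_{\widetilde{H}^s([0,1]),\ast}$, and $\tilde e\in\widetilde{H}^s([0,1])$ precisely when $s<a+\tfrac12$, matching the hypothesis. This contributes $h^{\beta(a-s+1/2)}$.

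On each $I_k$ with $k\geq 2$, a Bramble–Hilbert argument for functions vanishing at both endpoints gives $\|e\|_{\widetilde{H}^s(I_k),\ast}\lesssim |I_k|^{2-s}\|(y^a)''\|_{L^2(I_k)}$. Using $|I_k|\sim \beta k^{\beta-1}N^{-\beta}$ together with $\|(y^a)''\|_{L^\infty(I_k)}\lesssim y_{k-1}^{a-2}$ for $a\le 2$ (the case $a>2$ is smooth and yields the $h^{2-s}$ rate directly), each squared local contribution takes the form $k^{(\beta-1)(5-2s)+2\beta(a-2)}\,N^{-2\beta(a-s+1/2)}$. Summing the resulting series, the behaviour depends on the sign of the exponent $2\beta(a-s+\tfrac12)-4+2s$: if the exponent is $<-1$, the sum is bounded and produces rate $h^{\beta(a-s+1/2)}$; if it is $>-1$, the geometric growth gives rate $h^{2-s}$; and the borderline exponent $=-1$ introduces a logarithm, absorbed into the $h^{-\varepsilon}$ factor. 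Combining with the first-interval estimate yields the claimed $h^{\min\{\beta(a-s+1/2),\,2-s\}-\varepsilon}$.

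The main obstacle I expect is bookkeeping the $\widetilde{H}^s$-norm (rather than $H^s$) correctly on every sub-interval—in particular, justifying the sharp scaling $\|e\|_{\widetilde{H}^s(I_1),\ast}\sim y_1^{a-s+1/2}$ at the singular end by reducing to a fixed reference function, and verifying that the Bramble–Hilbert estimate really controls the $\widetilde{H}^s$ (extension-by-zero) norm on the smooth intervals, which relies on the error vanishing at both endpoints of $I_k$. The $\varepsilon$-loss arises only from the logarithmic borderline in the summation; off the borderline the estimate holds without it.
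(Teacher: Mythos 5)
Your argument is correct and follows essentially the same route as the proof the paper relies on (it only cites von Petersdorff, Satz 3.7/3.10, for this lemma): localize along the graded mesh using the sub-additivity of the $\widetilde{H}^s$-norm (the time-independent case of Lemma \ref{lemma3.2}, legitimate since the error vanishes at the nodes), handle the first element by exact scaling of $z^a-z$, use second-order local interpolation estimates weighted by $y_{k-1}^{a-2}$ on the remaining elements, and sum, with the logarithmic borderline absorbed into $h^{-\varepsilon}$. One cosmetic slip: the simplified $k$-exponent is $(\beta-1)(5-2s)+2\beta(a-2)=2\beta(a-s+\frac{1}{2})-5+2s$, so the quantity you display, $2\beta(a-s+\frac{1}{2})-4+2s$, is this exponent plus one and should be compared with $0$ rather than $-1$; the dichotomy and the resulting rates $h^{\beta(a-s+\frac{1}{2})}$ versus $h^{2-s}$ are otherwise exactly right.
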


\section{Asymptotic expansions and numerical approximation}\label{sectasympt}

\subsection{Asymptotic expansion of solutions to the wave equation in a wedge}

Solutions of the Laplace and Helmholtz equations exhibit well-known singularities at non-smooth boundary points of the domain. In this section we describe a similar decomposition of the solution to the wave equation with Dirichlet or Neumann boundary conditions near an edge or a corner, into a leading part given by explicit singular functions plus less singular terms.  The strategy of translating the results from the Helmholtz equation to the time-dependent wave or Lam\'{e} equations has  been studied in a series of papers by Plamenevskii and coauthors \cite{kokotov, kokotov2, matyu, plamenevskii}. We here recall their key result for a wedge.

To be specific, let $0\leq d\leq n-2$ and $K \subset \mathbb{R}^{n-d}$ an open cone with vertex at $0$, smooth outside the vertex. We denote by $\mathcal{K} = K \times \mathbb{R}^d$ the wedge over $K$ and consider the wave equation in $\mathcal{K}$: 
\begin{subequations} \label{eq:oriProblem2}
\begin{alignat}{2}
\partial_t^2 u(t,x) -\Delta u(t,x)&=0 &\quad &\text{in }\mathbb{R}^+_t \times \mathcal{K}_x\ , \\
Bu&=g &\quad &\text{on }  \Gamma = \partial \mathcal{K} \ ,\\
{u}(0,x)=\partial_t u(0,x)&=0 &\quad &\text{in } \mathcal{K} , 
\end{alignat}
\end{subequations}
where either inhomogeneous Dirichlet boundary conditions $Bu = u|_\Gamma$ or Neumann boundary conditions $Bu = \partial_\nu u|_\Gamma$ are considered on $\Gamma$.  We will describe the asymptotic behavior of a solution to the wave equation with Dirichlet or Neumann boundary conditions in $\mathcal{K}$ near $\{0\}\times \mathbb{R}^d$. Locally, the edge of a screen in $\mathbb{R}^3$ corresponds to $d=1$, a cone point to $d=0$.

The analysis uses the Fourier-Laplace transformation in time to reduce the {time dependent} problem to the Helmholtz equation {with frequency $\omega$. Then a Fourier transform is applied changing $z \in \mathbb{R}^d$ into $\zeta \in \mathbb{R}^d$. Using polar coordinates, the conical variable $y \in K$ is transformed into the radius $r$ and the spherical variable $\theta$. A series expansion is applied, where the eigenfunctions are determined by separation of variables.}

{More concretely, the Fourier-Laplace transform leads to the Helmholtz equation:}
\begin{align}\label{helmholtz} \nonumber
\omega^2\hat{u}(\omega,x)+ \Delta \hat{u}(\omega,x)&=0,~ x\in \mathcal{K}\ ,\\
B\hat{u}&=\hat{g} \quad \text{on }  \Gamma \ .
\end{align}
In this case a singular decomposition of the solution is known for every complex frequency $\omega$. 

Doing a separation of variables near the edge of $\mathcal{K}$, we consider the operator $\mathfrak{A}_{B}(\lambda) = (i \lambda)^2+i(n-d-2)\lambda-\Delta_{S}$ with $B=D$ for Dirichlet and $B=N$ for Neumann boundary conditions in the subset $\Xi=K \cap S^{n-d-1}$ of the sphere. Here $\Delta_S$ denotes the  Laplace operator on $S^{n-d-1}$. Denoting the eigenvalues of $\Delta_S$ in $\Xi$ by $\{\mu_{k,B}\}_{k=0}^\infty$, the eigenvalues of $\mathfrak{A}_B(\lambda)$ are given by $\lambda_{\pm k,B} = \frac{i(n-d-2)}{2}\mp i \nu_{k,B}$ with $\nu_{k,B}= \frac{((n-d-2)^2+4\mu_{k,B})^{1/2}}{2}$. The associated orthogonal eigenfunctions $\Phi_{k,B}$ of the angular variables $\theta$ are normalized as $\|\Phi_{k,B}\|^2_{L^2(\Xi)} = \nu_{k,B}^{-1}$.

For $d=1$, $n=3$, the nonzero eigenvalues $\lambda_{\pm k,B}= \mp \frac{k\pi}{\alpha}$ are simple if $\frac{k\pi}{\alpha} \not \in \mathbb{N}$, and have multiplicity $2$ otherwise. For $k>0$ $\Phi_{k,N}(\theta) = (k\pi)^{-\frac{1}{2}} \cos(k\pi \theta/\alpha)$, $\Phi_{k,D}(\theta) = (k\pi)^{-\frac{1}{2}} \sin(k\pi \theta/\alpha)$. For Neumann boundary conditions, the eigenvalue $\lambda_{0,N}=0$ has multiplicity $2$. Here, $\alpha$ denotes the opening angle of $K\subset\mathbb{R}^2$. 

We recover a screen with flat boundary as $\alpha$ tends to $2\pi^{-}$, and the discussion can be adapted to circular edges as in \cite{petersdorff3}. In this case $\lambda_{\pm k,B} = \mp \frac{k\pi}{\alpha}$.

The asymptotic expansion   involves special solutions of the Dirichlet or Neumann problem in $K$, see \cite[(3.5)]{kokotov3}, respectively \cite[(4.4)]{kokotov}: 
$$w_{-k,B}(y, \omega, \zeta) = \frac{2^{1-\nu_{k,B}}}{\Gamma(\nu_{k,B})}(i|y|\sqrt{-|\zeta|^2+\omega^2})^{\nu_{k,B}} K_{\nu_{k,B}}(i|y|\sqrt{-|\zeta|^2+\omega^2}) |y|^{i \lambda_{{-k,B}}} \Phi_{k,B}(y/|y|) \ .$$
Here $K_\nu$ is the modified Bessel function of the third kind.

One then transforms back into the time domain. Explicit formulas for the inverse Fourier transform $\mathcal{F}^{-1}_{{(\omega, \zeta)} \to (t,z)} w_{-k,B}(y, \overline{\omega}, \zeta)$ can be found in Lemma 8.1 of \cite{kokotov}.

The main theorem for the inhomogeneous wave equation involves an expansion in terms of singular functions. We refer to \cite[Theorem 7.4 and Remark 7.5]{kokotov} for the details in the case of the Neumann problem in a wedge, respectively \cite[Theorem 4.1]{kokotov3} for the Dirichlet problem in a cone.
\begin{theorem}\label{maintheorem}
Let $\beta\leq 1$ and assume that the line $\mathrm{Im} \ \lambda = \beta-1+\frac{n-d-2}{2}$ does not intersect the spectrum of $\mathfrak{A}_B$. Further, define  $$J_{\beta,{B}} = \left\{{j}: \frac{n-d-2}{2}> \mathrm{Im}\ \lambda_{{j,B}} > \beta-1+\frac{n-d-2}{2}\right\}\ ,$$ if $n-d>2$, and $$J_{\beta,{B}} = \left\{{j}: 0> \mathrm{Im}\ \lambda_{{j},B} > \beta-1\right\}\cup A\ ,$$ with $A = \{0\}$ for $\beta\leq 0$ and $A=\emptyset$ otherwise.\\
 If $u$ is a strong solution to the inhomogeneous wave equation with {right hand side $f$ and} homogeneous Dirichlet or Neumann boundary conditions ($B=D$, resp.~$N$) in $\mathcal{K}$ near $\{0\}\times \mathbb{R}^d$, then $u$ is of the form
$$\sum_{j \in J_{\beta,B}} \Gamma(1+\nu_{j,B}) |y|^{i \lambda_{j,B}} \Phi_{j,B}(\theta)\sum_{m=0}^{N_j}\frac{(\partial_t^2-\Delta_z)^m(i|y|)^{2m}}{2^{2m}m! \Gamma(m+\nu_{j,B}+1)} {\mathcal{F}^{-1}_{(\omega, \zeta) \to (t,z)}{c}_{j,B}} + \check{v}(y,t,z)\ ,$$
assuming that $i \lambda_{j,B} \not \in \mathbb{N}$. Here $N_j$ {is} sufficiently large, and $c_{j,B}(\omega, \zeta) = \langle \hat{f}(\cdot, \omega, \zeta), w_{-j,B}(\cdot, \overline{\omega}, \zeta)\rangle_{L^2(K)}$; its regularity is determined by the right hand side. 
The remainder $\check{v}$ is less singular, in the sense that  $\|\check{v}\|_{DV_{\beta, q}(K \times \mathbb{R}; \gamma)} \lesssim \|f\|_{RH_{\beta, q}(K \times \mathbb{R}, \gamma)}$, {$\gamma>0$, $q \in \mathbb{N}_0$}. We refer to \cite{kokotov} for the definition of the weighted spaces $DV_\beta(K \times \mathbb{R}, \gamma), RH_{\beta, q}(K \times \mathbb{R}, \gamma)$, {$\gamma>0$, $q \in \mathbb{N}_0$}. \\
If $i \lambda_{j,B} \in \mathbb{N}$ additional terms $|y|^{i \lambda_{j,B}} \log(|y|)$ appear.
\end{theorem}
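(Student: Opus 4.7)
The plan is to follow the Plamenevskii--Kokotov strategy already outlined in the paragraphs preceding the theorem. First apply the Fourier--Laplace transform in time $t\mapsto \omega$ (with $\mathrm{Im}\,\omega = \gamma>0$ to ensure causality), which turns the wave equation into a one-parameter family of Helmholtz equations \eqref{helmholtz} in $\mathcal{K}$. Then take a partial Fourier transform in the edge variable $z\in\mathbb{R}^d$, $z \mapsto \zeta$. This reduces matters to a parametric boundary value problem on the cone $K\subset\mathbb{R}^{n-d}$, with parameter $(\omega,\zeta)$ and effective spectral parameter $-|\zeta|^2+\omega^2$.

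Next, analyze this parametric problem on $K$ by the Kondrat'ev--Plamenevskii method. Introduce polar coordinates $y = r\theta$ with $r = |y|$, $\theta \in \Xi = K\cap S^{n-d-1}$, and apply a Mellin transform in $r$. The transformed equation is governed by the operator pencil $\mathfrak{A}_B(\lambda) = (i\lambda)^2 + i(n-d-2)\lambda - \Delta_S$ on $\Xi$ with boundary condition $B$, whose spectrum consists precisely of the exponents $\lambda_{\pm k, B}$ with eigenfunctions $\Phi_{k,B}$ computed in the text. Invert the Mellin transform along a horizontal contour $\mathrm{Im}\,\lambda = \gamma_0$ placed in the resolvent set and above the line corresponding to the weight $\beta$. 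The special solutions $w_{-j,B}$, built from $\Phi_{j,B}$ and modified Bessel functions $K_{\nu_{j,B}}$, provide the particular solutions of the $\omega$- and $\zeta$-dependent problem on each ray.

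Now shift the Mellin contour down to $\mathrm{Im}\,\lambda = \beta-1+\tfrac{n-d-2}{2}$, picking up the poles indexed by $j \in J_{\beta, B}$. The residue at each $\lambda_{j,B}$ gives, after passing back through the Mellin inversion and separation of variables, a singular contribution whose spatial part is $|y|^{i\lambda_{j,B}}\Phi_{j,B}(\theta)$; the Taylor expansion of $K_{\nu_{j,B}}$ at the argument $i|y|\sqrt{-|\zeta|^2+\omega^2}$ accounts for the finite sum over $m$, where the power $(\partial_t^2 - \Delta_z)^m$ comes from multiplying by $(-|\zeta|^2+\omega^2)^m$ in Fourier--Laplace variables. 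The coefficient is exactly the pairing $c_{j,B}(\omega,\zeta) = \langle \hat{f}, w_{-j,B}\rangle_{L^2(K)}$. The integral along the shifted contour yields the remainder $\check{v}$; controlling it in $DV_{\beta, q}$ by $\|f\|_{RH_{\beta,q}}$ reduces, via the Parseval identity in $(\omega,\zeta)$ and standard resolvent estimates for $\mathfrak{A}_B$ uniformly on the new contour, to the coercivity of the parametric problem on $K$. Finally, invert in $(\omega,\zeta)$ to produce the stated time-domain formula; the exceptional logarithmic terms $|y|^{i\lambda_{j,B}}\log|y|$ arise when the pole of the resolvent coincides with a pole of $1/\Gamma(\nu_{j,B} + m + 1)$ in the Bessel expansion, creating a double pole.

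The main obstacle is the uniform-in-$(\omega,\zeta)$ a priori analysis of the parametric problem on $K$: one must track how the resolvent of $\mathfrak{A}_B(\lambda)$ behaves on the shifted contour for large $|\omega|+|\zeta|$ in order to obtain the claimed continuity between the anisotropic weighted spaces $RH_{\beta,q}(K\times\mathbb{R};\gamma)$ and $DV_{\beta,q}(K\times\mathbb{R};\gamma)$. This is where the hyperbolicity of the wave operator enters, through the sign of $\mathrm{Re}(-|\zeta|^2+\omega^2)$ for $\mathrm{Im}\,\omega = \gamma > 0$, and it is handled in detail in \cite{kokotov, kokotov3}; apart from adapting their Dirichlet/Neumann bookkeeping on the cone versus the wedge, no new ideas are required.
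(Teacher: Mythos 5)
Your outline is correct and follows essentially the same route as the paper, which itself gives no proof but simply recalls the result from \cite[Theorem 7.4 and Remark 7.5]{kokotov} and \cite[Theorem 4.1]{kokotov3}: Fourier--Laplace transform in $t$, Fourier transform in $z$, separation of variables on the cone with the pencil $\mathfrak{A}_B(\lambda)$, Mellin contour shift picking up the residues indexed by $J_{\beta,B}$, coefficients given by pairing with the dual solutions $w_{-j,B}$, and logarithms in the resonant case. Like the paper, you defer the genuinely hard ingredient (the uniform-in-$(\omega,\zeta)$ weighted estimates yielding $\|\check v\|_{DV_{\beta,q}}\lesssim\|f\|_{RH_{\beta,q}}$) to those same references, so your sketch is consistent with, and no less complete than, the paper's treatment.
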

Further information can be obtained from the singular functions $W_{-{j},B}(y,t,z) = \mathcal{F}^{-1}_{(\zeta,\omega) \to (t,z)}w_{-{j},B}$, using the convolution representation
$${\mathcal{F}^{-1}_{(\omega, \zeta) \to (t,z)}{c}_{j,B}} = \int_{\mathbb{R}^d}dz_1\int_{\mathbb{R}} dt_1 \int_K dy f(y,z_1,t_1) W_{-j,B}(y, {t-t_1, z-z_1})$$ 
of the asymptotic expansion in Theorem \ref{maintheorem}. 
Because the singular support of $W_{-j,B}$ lies on the lightcone $\{(y,t,z) \in \mathbb{R}^{n+1} : t = \sqrt{|y|^2+|z|^2}\}$ emanating from the edge, we note that {$\mathcal{F}^{-1}_{(\omega, \zeta) \to (t,z)}{c}_{j,B}$} is smooth in $$\{(t,z)\in\mathbb{R}^{d+1} : t>\sup\{t_1 + \sqrt{|y|^2+|z-z_1|^2}  : (y,z_1,t_1) \in \mathrm{singsupp}\ f\}\}\ .$$ In particular, if $f$ is smooth, $\mathrm{singsupp}\ f = \emptyset$ and {$\mathcal{F}^{-1}_{(\omega, \zeta) \to (t,z)}{c}_{j,B}$} is smooth everywhere.

Theorem \ref{maintheorem} can be translated into a result for inhomogeneous  boundary conditions, as for elliptic problems \cite[Section 5]{petersdorff2}. If $B u = g$ on ${\mathbb{R}^+_t \times} \partial \mathcal{K}$, choose a function $\widetilde{g}$ in ${\mathbb{R}^+_t \times}\mathcal{K}$ such that $B \widetilde{g} = g$ on ${\mathbb{R}^+_t \times}\partial \mathcal{K}$. The function $U = u-\widetilde{g}$ satisfies homogeneous boundary conditions $BU=0$, and $\partial_t^2 U - \Delta U = {f} -\partial_t^2 \widetilde{g} + \Delta \widetilde{g}$. According to Theorem \ref{maintheorem}, $U$ admits an asymptotic expansion, and therefore so does $u = U + \widetilde{g}$. 

For the analysis of the solutions to the boundary integral formulations of the wave equation, the resulting asymptotic expansions of the boundary values $u|_\Gamma$ and $\partial_\nu u|_\Gamma$ will be crucial. They are directly obtained from the expansion in the interior. In particular, for $i \lambda_{j,B} \not \in \mathbb{N}$ the singularities of  $u|_\Gamma$ are proportional to $|y|^{i \lambda_{j,B}+2m}$, and the singularities of  $\partial_\nu u|_\Gamma$ are proportional to $|y|^{i \lambda_{j,B}+2m-1}$. When $i \lambda_{j,B}\in \mathbb{N}$,  additional terms $|y|^{i \lambda_{j,B}+2m} \log(|y|)$, respectively $|y|^{i \lambda_{j,B}+2m-1} \log(|y|)$ appear.

\subsection{Singularities for circular screens and approximation}

We first illustrate the above results for the exterior of a circular wedge with exterior opening angle $\alpha$. For $\alpha \to 2\pi^-$, the wedge degenerates into the circular screen $\{(x_1,x_2,0) \in \mathbb{R}^3 :  {x_1^2+x_2^2}\leq 1\}$. Near the edge $\{(x_1,x_2,0) \in \mathbb{R}^3 :  {x_1^2+x_2^2}= 1\}$ we use the coordinates $(y,z,\theta)$, where in polar coordinates in the $x_1-x_2$-plane $y=r-1$, $z=\theta$. Using \cite{petersdorff3}, an analogous expansion to Theorem \ref{maintheorem} also holds in this curved geometry, with the same leading singular term $|y|^{i\lambda}$, where $\lambda\to -\frac{i}{2}$ as $\alpha \to 2\pi^-$:
\begin{align}
u(y,t,z)|_\Gamma &= a(t,z) |y|^{1/2} + \check{v}(y,t,z)\  , \label{decompostionEdget}\\
\partial_\nu u(y,t,z)|_\Gamma &= b(t,z) |y|^{-\frac{1}{2}} + {\tilde{v}}(y,z, t)\ . \label{decompositionEdge}
 \end{align}
{Here $a$ and $b$ are smooth for smooth data.}
 
From these decompositions we obtain optimal approximation properties on the graded mesh. Here we show how the analysis performed by T.~von Petersdorff in \cite{disspetersdorff} may be extended to the hyperbolic case. The results are derived for the h-version on graded meshes and contain automatically the case of a quasi-uniform mesh by setting the grading parameter $\beta=1$. \\

\begin{theorem}\label{approxtheorem1} {Let $\varepsilon>0$.}
a) Let $u$ be a strong solution to the homogeneous wave equation with inhomogeneous Neumann boundary conditions $\partial_\nu u|_\Gamma = g$, with $g$ smooth. Further, let $\phi_{h,\Delta t}^\beta$ be the best approximation {in the norm of ${H}^{r}_\sigma(\R^+, \widetilde{H}^{\frac{1}{2}-s}(\Gamma))$ to the Dirichlet trace $u|_\Gamma$ in $\widetilde{V}^{p,1}_{\Delta t,h}$} on a $\beta$-graded spatial mesh with $\Delta t \lesssim h^\beta$. Then $\|u-\phi_{h, \Delta t}^\beta\|_{r,\frac{1}{2}-s, \Gamma, \ast} \leq C_{\beta,\varepsilon} h^{\min\{\beta(\frac{1}{2}+s), \frac{3}{2}+s\}{-\varepsilon}}$, where $s \in [0,\frac{1}{2}]$ and {$r \in [0,p)$}.

b) Let $u$ be a strong solution to the homogeneous wave equation with inhomogeneous Dirichlet boundary conditions $u|_\Gamma = g$, with $g$ smooth. Further, let $\psi_{h,\Delta t}^\beta$ be the best approximation  {in the norm of ${H}^{r}_\sigma(\R^+, \widetilde{H}^{-\frac{1}{2}}(\Gamma))$ to the Neumann trace $\partial_\nu u|_\Gamma$ in ${V}^{p,0}_{\Delta t,h}$} on a $\beta$-graded spatial mesh  with $\Delta t \lesssim h^\beta$. Then $\|\partial_\nu u-\psi_{h, \Delta t}^\beta\|_{r,-\frac{1}{2}, \Gamma, \ast} \leq C_{\beta,\varepsilon} h^{\min\{\frac{\beta}{2}, \frac{3}{2}\}{-\varepsilon}}$, where  {$r \in [0,p+1)$}.
\end{theorem}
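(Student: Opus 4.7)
The plan is to exploit the singular decompositions \eqref{decompostionEdget} and \eqref{decompositionEdge} to isolate the leading edge singularity, approximate it explicitly on the graded mesh, and handle the smoother remainder with standard Bramble--Hilbert estimates. The key building blocks are the one--dimensional graded--mesh Lemmas \ref{keylemmagrad} and \ref{keylemmagrad2} for approximating $|y|^{\pm 1/2}$, the tensor--product factorizations in Lemmas \ref{lemma3.3} and \ref{lemma3.3a}, the element--level approximation estimates in Lemmas \ref{lemma3.4} and \ref{TobiasTRACE}, and the localization Lemma \ref{lemma3.2} for summing local errors.

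Near the edge I work in local coordinates $(y,z)$ with $y = 1-r \in [0,1]$ (radial distance to the circular edge) and $z \in [0,2\pi)$ (angular arc length). On each graded rectangle $R = [y_k, y_{k+1}] \times [z_j, z_{j+1}]$ and each time slab $[t_n, t_{n+1}]$ I apply a tensor--product projection that combines a piecewise polynomial projection of degree $p$ in time with the appropriate spatial projection. For part (b) the spatial projection is piecewise constant, and Lemma \ref{keylemmagrad} with the parameter $a = 1/2$ and index $s = -1/2$ yields the local $\widetilde{H}^{-1/2}$-rate $h^{\min\{\beta/2,\, 3/2\}-\varepsilon}$ for the element touching the edge. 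Lemma \ref{lemma3.3} factorizes the spatial Sobolev norm so that the smooth factor $b(t,z)$ contributes only bounded constants, while the time projection contributes $(\Delta t)^{p+1-r}$, absorbed via $\Delta t \lesssim h^\beta$. For part (a), piecewise bilinear interpolation in space together with Lemma \ref{keylemmagrad2} (with $a = 1/2$ and Sobolev index $1/2 - s$) produces the local rate $h^{\min\{\beta(1/2+s),\, 3/2+s\}-\varepsilon}$, with Lemma \ref{lemma3.3a} supplying the positive--index factorization and Lemma \ref{TobiasTRACE} controlling the bilinear interpolation error.

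Away from the edge the trace and the remainders $\check v, \tilde v$ are smooth: by Theorem \ref{maintheorem} the coefficients $a,b$ are smooth in $(t,z)$ whenever $g$ is, since $\mathrm{singsupp}\, g = \emptyset$. On the quasi--uniform interior Lemmas \ref{lemma3.4} and \ref{TobiasTRACE} yield an error of order $h^{3/2-\varepsilon}$, which is never worse than the edge--dominated rate. Squaring the local errors and summing by \eqref{3.21b} in Lemma \ref{lemma3.2} then delivers the global estimate.

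The main obstacle will be the careful bookkeeping of three competing rates --- the edge--dominated graded rate $h^{\beta(\cdot)}$, the interior--saturated rate $h^{3/2+\cdot}$, and the temporal rate $(\Delta t)^{p+1-r}$ --- arranged so that the optimal minimum emerges with a constant $C_{\beta,\varepsilon}$ independent of $h$. A secondary technical point is transferring the rectangle--based lemmas of \cite{disspetersdorff} to the curved annular geometry of the circular screen; this is handled by a diffeomorphism to polar coordinates with Jacobian smooth and bounded away from zero on the near--edge strip, so pulled--back rectangles inherit equivalent Sobolev norms and the tensor structure of the singular term is preserved modulo lower--order contributions that are absorbed into the remainder.
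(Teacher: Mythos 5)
Your proposal is correct and follows essentially the same route as the paper's proof: it uses the decompositions \eqref{decompostionEdget}--\eqref{decompositionEdge}, the tensor-product factorizations of Lemmas \ref{lemma3.3} and \ref{lemma3.3a}, the graded-mesh estimates of Lemmas \ref{keylemmagrad} and \ref{keylemmagrad2} with exactly the exponents the paper uses, the element-level bounds of Lemmas \ref{lemma3.4} and \ref{TobiasTRACE} for the (sufficiently regular, after expanding finitely many terms) remainder, and the localization Lemma \ref{lemma3.2}, bounding the best approximation by the explicit projections $\Pi_t^{p}\Pi_x^{0}$ and $\Pi_t^{p}\Pi_x^{1}$. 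The only cosmetic difference is that you make the transfer to the curved annular geometry explicit via a polar-coordinate diffeomorphism, whereas the paper handles this implicitly by working on rectangles and citing \cite{disspetersdorff} for the passage to triangles.
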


Theorem \ref{approxtheorem1} implies a corresponding result for the solutions of the single layer and hypersingular integral equations on the screen:
\begin{corollary}\label{approxcor1} Let $\varepsilon>0$.
a) Let $\phi$ be the solution to the hypersingular integral equation \eqref{hypersingeq} and  $\phi_{h,\Delta t}^\beta$ the best approximation {in the norm of ${H}^{r}_\sigma(\R^+, \widetilde{H}^{\frac{1}{2}-s}(\Gamma))$ to $\phi$ in $\widetilde{V}^{p,1}_{\Delta t,h}$} on a $\beta$-graded spatial mesh  with $\Delta t \lesssim h^\beta$. Then $\|\phi-\phi_{h, \Delta t}^\beta\|_{r,\frac{1}{2}-s, \Gamma, \ast} \leq C_{\beta,\varepsilon} h^{\min\{\beta(\frac{1}{2}+s), \frac{3}{2}+s\}{-\varepsilon}}$, where $s \in [0,\frac{1}{2}]$ and {$r \in [0,p)$}.

b) Let $\psi$ be the solution to the single layer integral equation \eqref{dirproblemV} and  $\psi_{h,\Delta t}^\beta$ the best approximation  {in the norm of ${H}^{r}_\sigma(\R^+, \widetilde{H}^{-\frac{1}{2}}(\Gamma))$ to $\psi$ in ${V}^{p,0}_{\Delta t,h}$} on a $\beta$-graded spatial mesh  with $\Delta t \lesssim h^\beta$. Then $\|\psi-\psi_{h, \Delta t}^\beta\|_{r,-\frac{1}{2}, \Gamma, \ast} \leq C_{\beta,\varepsilon} h^{\min\{\frac{\beta}{2}, \frac{3}{2}\}{-\varepsilon}}$, where {$r \in [0,p+1)$}.
\end{corollary}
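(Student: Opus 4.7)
The plan is to reduce the corollary to Theorem \ref{approxtheorem1} by identifying the solutions of the boundary integral equations with (suitable multiples of) the boundary traces of a solution of the wave equation on the flat screen. The paper already notes this identification in the paragraph immediately after Corollary B: on the flat screen $\Gamma$, the symmetry across the plane containing $\Gamma$ implies that $\phi = [u]|_\Gamma$ when $u$ solves the homogeneous wave equation with Neumann datum $g$, while $\psi = [\partial_\nu u]|_\Gamma$ when $u$ solves the homogeneous wave equation with Dirichlet datum $f$. The jumps $[\,\cdot\,]|_\Gamma$ are, up to a fixed constant coming from the symmetry of the fundamental solution $G$ across $\Gamma$, equal to $u|_\Gamma$, respectively to $\partial_\nu u|_\Gamma$.

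First, for part (a), given the right-hand side $g$ of the hypersingular equation $W\phi=g$, one constructs $u$ as the strong solution to the homogeneous wave equation outside $\Gamma$ with Neumann boundary condition $\partial_\nu u|_\Gamma = g$. Because $g$ is smooth in the hypotheses, Theorem \ref{approxtheorem1}(a) applies to $u|_\Gamma$ and yields the claimed rate for its best approximation in $\widetilde{V}^{p,1}_{\Delta t,h}$ with respect to the norm of $H^r_\sigma(\R^+, \widetilde{H}^{1/2-s}(\Gamma))$. Since $\phi$ is a constant multiple of $u|_\Gamma$, the estimate transfers unchanged. Part (b) is analogous: given $f$, one takes $u$ to be the solution of the wave equation with Dirichlet data $u|_\Gamma = f$, identifies $\psi$ (up to a constant) with $\partial_\nu u|_\Gamma$, and invokes Theorem \ref{approxtheorem1}(b).

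The only nontrivial points are therefore (i) checking that the data $g$ and $f$ of the integral equations match the smoothness hypothesis of Theorem \ref{approxtheorem1}, which is automatic by the assumption that the right-hand sides are smooth, and (ii) justifying the identification of $\phi$ and $\psi$ with the traces. The latter uses the representation formulas \eqref{singlay} and \eqref{doubleansatz} together with the jump relations of the single and double layer potentials across $\Gamma$, combined with the fact that for a flat screen the normal derivative of $G$ is antisymmetric across $\Gamma$ (cf.\ Remark \ref{vanish}), so the double layer provides a solution of the wave equation whose trace has a prescribed jump. I expect (ii) to be the main obstacle in making the argument fully rigorous, in particular keeping track of $\widetilde{H}^s$ versus $H^s$ spaces on the open screen under the jump; but once the identification is in place, the estimate is a one-line consequence of Theorem \ref{approxtheorem1}.
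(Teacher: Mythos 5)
Your proposal is correct and follows essentially the same route as the paper: the paper's justification of Corollary \ref{approxcor1} is precisely the identification $\phi=[u]|_\Gamma$ (for the Neumann/hypersingular case) and $\psi=[\partial_\nu u]|_\Gamma$ (for the Dirichlet/single layer case) on the flat screen, after which the rates follow from Theorem \ref{approxtheorem1}, the constant factor relating jump and one-sided trace being harmless for best-approximation estimates. Your additional remarks on the jump relations and the $\widetilde{H}^s$ versus $H^s$ bookkeeping go slightly beyond what the paper spells out, but do not change the argument.
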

Indeed, on the flat screen the solutions to the integral equations are given by $\phi = \left[u\right]|_\Gamma$ {in terms of the solution $u$ which satisfies Neumann conditions $Bu = \partial_\nu u|_\Gamma = g$, respectively} $\psi = \left[\partial_\nu u\right]|_\Gamma$ {in terms of the solution $u$ which satisfies Dirichlet conditions $Bu = u|_\Gamma=f$}.\\

The proof of Theorem \ref{approxtheorem1} relies on the auxiliary results in Section \ref{discretization}. We first consider the approximation of the Neumann trace.
\begin{theorem} 
Under the assumptions of Theorem \ref{approxtheorem1}, there holds
$\|\partial_\nu u - \Pi_x^{{0}} \Pi_t^{{p}} \partial_\nu u\|_{r, -\frac{1}{2}, \Gamma, \ast} \lesssim  h^{\min\{\beta/2, \frac{3}{2}\}{-\varepsilon}}$.
\end{theorem}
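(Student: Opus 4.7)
The plan is to combine the asymptotic expansion \eqref{decompositionEdge}, $\partial_\nu u = b(t,z)\,|y|^{-1/2} + \tilde v$, with the one–dimensional graded–mesh estimate of Lemma \ref{keylemmagrad}, the tensor–product Sobolev bound of Lemma \ref{lemma3.3}, and the rectangle approximation estimate of Lemma \ref{lemma3.4}. The argument is local: using a smooth partition of unity subordinate to a covering of $\Gamma$ by a tubular neighborhood of $\partial\Gamma$ and an interior compactum, together with Lemma \ref{lemma3.2}, the analysis reduces to element-by-element estimates which are then summed. On interior cells the mesh is quasi-uniform and $\partial_\nu u$ is smooth; Lemma \ref{lemma3.4} yields a spatial error of order $h^{3/2}$ in the $\widetilde H^{-1/2}$–norm and a time contribution of order $(\Delta t)^{p+1-r}\lesssim h^{\beta(p+1-r)}$, both subdominant for $r\in[0,p+1)$.

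Near the edge I introduce local coordinates $(y,z)$ with $y=\mathrm{dist}(\cdot,\partial\Gamma)$, so that $\Pi_x^0=\Pi_y^0\Pi_z^0$ on each tensor-product cell. Applied to the singular part $b(t,z)|y|^{-1/2}$ the projection error decomposes as
\begin{align*}
b|y|^{-1/2} - \Pi_x^0 \Pi_t^p \bigl[b|y|^{-1/2}\bigr]
&= (I-\Pi_t^p)b \cdot |y|^{-1/2} \\
&\quad + (\Pi_t^p b)\cdot (I-\Pi_y^0)|y|^{-1/2}\\
&\quad + \Pi_y^0|y|^{-1/2}\cdot (I-\Pi_z^0)(\Pi_t^p b).
\end{align*}
The central summand carries the dominant rate: combining Lemma \ref{lemma3.3} (with $s_1=0$, $s_2=\tfrac{1}{2}$) and Lemma \ref{keylemmagrad} (with $a=\tfrac{1}{2}$, $s=-\tfrac{1}{2}$) on the $\beta$–graded mesh in $y$ gives
\begin{equation*}
\bigl\|(\Pi_t^p b)\cdot (I-\Pi_y^0)|y|^{-1/2}\bigr\|_{r,-1/2,\Gamma,\ast}
\lesssim \|b\|_{H^r_\sigma(\R^+,L^2)}\; h^{\min\{\beta/2,\,3/2\}-\varepsilon}.
\end{equation*}
The remaining two summands involve only the smooth factor $b(t,z)$: standard time and tangential approximation yield contributions of order $(\Delta t)^{p+1-r}\lesssim h^{\beta(p+1-r)}$ and $h$, respectively, both dominated by the central rate under $r<p+1$.

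For the remainder $\tilde v$, Theorem \ref{maintheorem} implies that $\tilde v$ is more regular than $|y|^{1/2-\varepsilon}$ in $y$ and smooth in $(t,z)$, so Lemma \ref{lemma3.4} applied rectangle-by-rectangle (with $\Delta t\lesssim h^\beta$ controlling the time factor) gives a spatial error of order $h^{3/2-\varepsilon}$. Summing the local estimates via Lemma \ref{lemma3.2} then yields the claimed global bound.

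The hard part will be the tight bookkeeping of the two subdominant terms near the edge. The piecewise average $\Pi_y^0|y|^{-1/2}$ is large on the innermost strip $[0,y_1]$, of width $\sim h^\beta$, and must be combined carefully with the smooth tangential error $(I-\Pi_z^0)\Pi_t^p b = O(h)$ through Lemma \ref{lemma3.3} to keep its contribution subdominant. Similarly, one has to verify that pairing the time–projection error $(I-\Pi_t^p)b$ with the singular factor $|y|^{-1/2}$ does not erase the gain $(\Delta t)^{p+1-r}$; this is precisely where the coupling $\Delta t\lesssim h^\beta$ enters, in conjunction with Lemma \ref{lemma3.3} to transfer the $\widetilde H^{-1/2}(I_y)$-norm of $|y|^{-1/2}$ onto the smooth time–tangential factor. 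Once these subdominant terms are controlled, the graded–mesh bound from Lemma \ref{keylemmagrad} fixes the final exponent $\min\{\beta/2,\,3/2\}-\varepsilon$.
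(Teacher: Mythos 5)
Your skeleton is the same as the paper's: split $\partial_\nu u$ via \eqref{decompositionEdge} into $b(t,z)\,|y|^{-1/2}+\tilde v$, telescope the tensor-product projection error over $\Pi_t^p$, $\Pi_y^0$, $\Pi_z^0$, bound the dominant factor $(I-\Pi_y^0)|y|^{-1/2}$ in $\widetilde{H}^{-1/2}$ by Lemma \ref{keylemmagrad} together with Lemma \ref{lemma3.3}, and treat the regular part $\tilde v$ element by element with Lemma \ref{lemma3.4} and Lemma \ref{lemma3.2}. The dominant term and the remainder are handled exactly as in the paper.

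There is, however, a genuine gap in the bookkeeping of the two terms you call subdominant, and as stated your bounds do not give the theorem when $\beta>2$. You estimate the tangential term $\Pi_y^0|y|^{-1/2}\cdot(I-\Pi_z^0)\Pi_t^p b$ by $O(h)$ and claim it is dominated by the central rate; but the asserted rate is $h^{\min\{\beta/2,\,3/2\}-\varepsilon}$, so for $\beta>2$ an $O(h)$ contribution is \emph{not} subdominant -- it caps the total error at $O(h)$ instead of $h^{3/2-\varepsilon}$. The paper avoids this by measuring the tangential projection error in a negative-order spatial norm, $\|\Pi_t^p b-\Pi_t^p\Pi_z^0 b\|_{r,\varepsilon-\frac{1}{2}}\lesssim h^{3/2}\|\Pi_t^p b\|_{r,1+\varepsilon}$ (duality gains the extra $h^{1/2}$ for piecewise constants), and pairing it via Lemma \ref{lemma3.3} with $\||y|^{-1/2}\|_{\widetilde{H}^{-\varepsilon}(I)}$, which only costs an $\varepsilon$. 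Moreover, because you telescope in the order $t,y,z$ rather than the paper's $t,z,y$, your tangential error is multiplied by $\Pi_y^0|y|^{-1/2}$ rather than by $|y|^{-1/2}$, so you additionally need a bound on $\|\Pi_y^0|y|^{-1/2}\|_{\widetilde{H}^{-\varepsilon}(I)}$ uniform in $h$; this does hold (triangle inequality plus Lemma \ref{keylemmagrad}), but it is precisely the point you label ``the hard part'' and leave open. The same negative-order pairing is needed for the time term: the paper bounds $\|(I-\Pi_t^p)b\|_{r,\varepsilon-\frac{1}{2}}\lesssim(\Delta t)^{p+1-r}\max\{h_1,\Delta t\}^{\frac{1}{2}-\varepsilon}\|b\|_{p+1,0}$ against $\||y|^{-1/2}\|_{\widetilde{H}^{-\varepsilon}(I)}$, whereas your bare $(\Delta t)^{p+1-r}\lesssim h^{\beta(p+1-r)}$ does not reach $h^{3/2}$ when $r$ is close to $p+1$. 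With these two repairs -- negative-order norms on the smooth $(t,z)$-factors and only $\widetilde{H}^{-\varepsilon}$ on the singular $y$-factor, plus the uniform bound on the projected singular function -- your argument coincides with the paper's proof.
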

As before, our results extend from rectangular to triangular elements as in reference \cite{disspetersdorff}.
\begin{proof}{Using the decomposition \eqref{decompositionEdge} for $ \partial_{\nu} u$, we can separate the singular and regular parts on the rectangular mesh:
\begin{align*}&\|\partial_\nu u - \Pi_x^{{0}} \Pi_t^{{p}} \partial_\nu u\|_{r, -\frac{1}{2}, \Gamma, \ast} \leq \|b(t,z) |y|^{-\frac{1}{2}} - \Pi_t^{{p}} \Pi_x^{{0}} b(t,z) |y|^{-\frac{1}{2}}\|_{r, -\frac{1}{2}, \Gamma,\ast}  + \|\tilde{v} - \Pi_t^{{p}} \Pi_x^{{0}}{\tilde{v}}\|_{r, -\frac{1}{2}, \Gamma,\ast}\\ & \leq \|b(t,z) |y|^{-\frac{1}{2}} -\Pi_t^{{p}} b(t,z) |y|^{-\frac{1}{2}}\|_{r, -\frac{1}{2}, \Gamma,\ast}+\|\Pi_t^{{p}} b(t,z) |y|^{-\frac{1}{2}}- \Pi_t^{{p}} \Pi_x^{{0}} b(t,z) |y|^{-\frac{1}{2}}\|_{r, -\frac{1}{2}, \Gamma,\ast} \\ & \qquad + \|\tilde{v} - \Pi_t^{{p}} \Pi_x^{{0}}{\tilde{v}}\|_{r, -\frac{1}{2}, \Gamma,\ast}\\ &
\leq \|b(t,z) -\Pi_t^{{p}} b(t,z)\|_{r, \epsilon-\frac{1}{2}} \||y|^{-\frac{1}{2}}\|_{\widetilde{H}^{-\varepsilon}(I)}  +  \|\Pi_t^{{p}} b(t,z) |y|^{-\frac{1}{2}}- \Pi_t^{{p}} \Pi_z^{{0}} b(t,z) |y|^{-\frac{1}{2}}\|_{r, -\frac{1}{2}, \Gamma,\ast} \\ & \qquad + \| \Pi_t^{{p}} \Pi_z^{{0}} b(t,z) |y|^{-\frac{1}{2}}-\Pi_t^{{p}} \Pi_z^{{0}} b(t,z) \Pi_y^{{0}} |y|^{-\frac{1}{2}}\|_{r, -\frac{1}{2}, \Gamma,\ast}+
\|\tilde{v} - \Pi_t^{{p}} \Pi_x^{{0}}{\tilde{v}}\|_{r, -\frac{1}{2}, \Gamma,\ast}\ .
\end{align*}
Here, for the first term we have used Lemma \ref{lemma3.3}, and for the second $\Pi_x^{{0}} =  \Pi_z^{{0}} \Pi_y^{{0}}$. We note that the first term is bounded by $$\|b(t,z) -\Pi_t^{{p}} b(t,z)\|_{r, \epsilon-\frac{1}{2}} \lesssim (\Delta t)^{p+1-r} \max\{h_1, \Delta t\}^{\frac{1}{2}- \epsilon} \|b(t,z)\|_{p+1,0}\ .$$
The second and third terms we obtain with Lemma \ref{lemma3.3}:
\begin{align*}&
 \|\Pi_t^{{p}} b(t,z) |y|^{-\frac{1}{2}}- \Pi_t^{{p}} \Pi_z^{{0}} b(t,z) |y|^{-\frac{1}{2}}\|_{r, -\frac{1}{2}, \Gamma,\ast} + \| \Pi_t^{{p}} \Pi_z^{{0}} b(t,z) |y|^{-\frac{1}{2}}-\Pi_t^{{p}} \Pi_z^{{0}} b(t,z) \Pi_y^{{0}} |y|^{-\frac{1}{2}}\|_{r, -\frac{1}{2}, \Gamma,\ast}\\ & \lesssim \|\Pi_t^{{p}} b(t,z) - \Pi_t^{{p}} \Pi_z^{{0}} b(t,z)\|_{r, \varepsilon - \frac{1}{2} } \||y|^{-\frac{1}{2}}\|_{\widetilde{H}^{-\varepsilon}(I)}  +\| \Pi_t^{{p}} \Pi_z^{{0}} b(t,z)\|_{r,0} \||y|^{-\frac{1}{2}} - \Pi_y^{{0}}|y|^{-\frac{1}{2}}\|_{\widetilde{H}^{-\frac{1}{2}}(I) } \ .
\end{align*}}
From Lemma \ref{keylemmagrad} we have $\||y|^{-\frac{1}{2}} - \Pi_y^{{0}}|y|^{-\frac{1}{2}}\|_{\widetilde{H}^{-\frac{1}{2}}(I) } \lesssim h^{\min\{\frac{\beta}{2}, \frac{3}{2}\}-\varepsilon}$ and $\|\Pi_t^{{p}} b(t,z) - \Pi_t^{{p}} \Pi_z^{{0}} b(t,z)\|_{r, \varepsilon - \frac{1}{2} } \lesssim h^{3/2} \|\Pi_t^{{p}} b\|_{r, 1+\varepsilon}$. 

{After possibly expanding finitely many terms, which may be treated as above, we may assume that the regular part $\tilde{v}$ in \eqref{decompositionEdge} is $H^1$ in space. Localizing in space and time to the space-time elements $(t_j, t_{j+1}]\times R_{kl}$, as in Figure \ref{fig:A1},} 
{$$\|\tilde{v} - \Pi_t^{{p}} \Pi_x^{{0}}{\tilde{v}}\|_{r, -\frac{1}{2}, \Gamma,\ast} \lesssim \sum_j \sum_{k,l} \|\tilde{v} - \Pi_t^{{p}} \Pi_x^{{0}}{\tilde{v}}\|_{r, -\frac{1}{2}, (t_j, t_{j+1}]\times R_{kl},\ast}$$
and using Lemma \ref{lemma3.4} for $\tilde{v}$ and Lemma \ref{lemma3.2}, 
\begin{align*}\|\tilde{v} - \Pi_x^{{0}}\Pi_t^{{p}} {\tilde{v}}\|_{r, -\frac{1}{2},(t_j, t_{j+1}]\times R_{kl},\ast} &\lesssim_\sigma   (\Delta t)^{p+1-r} \max\{h_1,h_2, \Delta t\}^{1/2}\|\partial_t^{p+1} \tilde{v}\|_{L^2([t_j,t_{j+1}]\times R_{kl})} \\ & +  \max\{h_1,h_2, \Delta t\}^{\frac{1}{2}}\left(h_1 \| \tilde{v}_x\|_{L^2([t_j,t_{j+1}]\times R_{kl})}  + h_2 \| \tilde{v}_y\|_{L^2([t_j,t_{j+1}]\times R_{kl})} \right)\ .
\end{align*}}
{By summing over all rectangles $R_{kl}$ of the mesh of the screen and noting the exponential weight $e^{-2\sigma t}$, we conclude that for $\Delta t \lesssim \min\{h_1,h_2\}$ we have $\|\partial_\nu u - \Pi_x \Pi_t \partial_\nu u\|_{r, -\frac{1}{2}, \Gamma, \ast} \lesssim  h^{\min\{\beta/2, \frac{3}{2}\}-\varepsilon}$.}
\end{proof}

\subsubsection{Approximation of the trace}
We now consider the approximation of the solution $u$ to the wave {e}quation on the screen, with expansion \eqref{decompostionEdget}, or equivalently the solution to the hypersingular integral equation. Apart from the energy norm, here the $L^2$-norm is of interest, and we state the result for general Sobolev indices:  
\begin{theorem}
For {$r \in [0,p)$} and $s\in [0, \frac{1}{2}]$ there holds
$\|u - \Pi_x^{{1}} \Pi_t^{{p}} u\|_{r,\frac{1}{2}-s, \Gamma, \ast} \lesssim  h^{\min\{\beta (\frac{1}{2}+s), \frac{3}{2}+s\}{-\varepsilon}}$.
\end{theorem}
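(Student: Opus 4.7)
The plan is to mirror the proof of the preceding theorem for the Neumann trace, now invoking the asymptotic expansion \eqref{decompostionEdget} of the Dirichlet trace: near the edge, $u|_\Gamma(y,t,z) = a(t,z)|y|^{1/2} + \check{v}(y,t,z)$, with $a$ smooth in $(t,z)$ and $\check{v}$ a more regular remainder in $y$. Splitting $\|u - \Pi_x^1 \Pi_t^p u\|_{r,1/2-s,\Gamma,\ast}$ by the triangle inequality into a singular and a regular contribution, I would treat each separately.

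For the singular part, I would exploit the tensor-product structure $\Pi_x^1 = \Pi_y^1 \Pi_z^1$ of bilinear interpolation on rectangles and insert intermediate projections:
\begin{align*}
a(t,z)|y|^{1/2} - \Pi_t^p \Pi_z^1 a \cdot \Pi_y^1 |y|^{1/2} &= (a - \Pi_t^p \Pi_z^1 a)\,|y|^{1/2} \\
&\quad + \Pi_t^p \Pi_z^1 a \cdot \bigl(|y|^{1/2} - \Pi_y^1 |y|^{1/2}\bigr).
\end{align*}
Since the Sobolev index $1/2 - s \in [0,1/2]$ is nonnegative, Lemma \ref{lemma3.3a} (in place of Lemma \ref{lemma3.3} used in the Neumann case) separates each summand into a $(t,z)$-factor and a $y$-factor. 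The dominant contribution is controlled by the one-dimensional estimate of Lemma \ref{keylemmagrad2} with exponent $a = 1/2$ and Sobolev index $1/2-s$,
\[
\||y|^{1/2} - \Pi_y^1 |y|^{1/2}\|_{\widetilde{H}^{1/2-s}([0,1])} \lesssim h^{\min\{\beta(1/2+s),\,3/2+s\} - \varepsilon},
\]
which delivers precisely the rate claimed. The remaining factors involve only the smooth coefficient $a$ and its tensor-product projections, hence contribute at higher order by standard approximation theory, with the time direction controlled via $\Delta t \lesssim h^\beta$ and $r < p$.

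For the regular remainder $\check{v}$, I would localize to the space-time elements $(t_j,t_{j+1}] \times R_{kl}$ via Lemma \ref{lemma3.2}, and apply the bilinear interpolation bounds of Lemma \ref{TobiasTRACE} on each element. These yield an $L^2$-error of order $h^2 + (\Delta t)^{p+1-r}$ in space-time and, through \eqref{bilinearinterpolant2D2}, an $H^1$-error of order $h$. Interpolating between the $L^2$- and $H^1$-bounds produces the $H^{1/2-s}$-rate $O(h^{3/2+s})$; squaring, summing over all elements, and using the exponential weight $e^{-2\sigma t}$ gives the global estimate $O(h^{3/2+s})$, matching the singular contribution.

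The main obstacle will be producing the $H^{1/2-s}$-bound on $\check{v}$, since Lemma \ref{TobiasTRACE} is stated only at integer Sobolev orders, so interpolation has to be carried out carefully and uniformly across all elements. In addition, as in the proof of the Neumann case, one may first have to absorb finitely many subleading terms of the expansion \eqref{decompostionEdget} into the singular part (each handled exactly as above) to ensure that $\check{v}$ is smooth enough in space for the bilinear interpolant to attain the optimal $O(h^2)$ rate in $L^2$.
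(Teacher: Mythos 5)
Your proposal follows essentially the same route as the paper's proof: the same splitting of \eqref{decompostionEdget} into singular and regular parts, the insertion of intermediate projections $\Pi_z^{1}$, $\Pi_y^{1}$ combined with Lemma \ref{lemma3.3a} and Lemma \ref{keylemmagrad2} for the edge singularity, and Lemma \ref{TobiasTRACE} plus interpolation between the $L^2$- and $H^1$-bounds for the remainder $\check{v}$ (after absorbing finitely many subleading terms so that $\check{v}$ is sufficiently regular). The only cosmetic difference is that you state the fractional index $\tfrac{1}{2}-s$ throughout, while the paper writes out the case $s=0$ and leaves the general $s$ to the same argument, so your attempt is correct and matches the paper.
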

\begin{proof}
Similarly to above, one estimates on every rectangle $R$ of the mesh:
\begin{align*}
\|\Pi_t^{{p}}  u - \Pi_x^{{1}} \Pi_t^{{p}}  u\|_{r, \frac{1}{2}, (t_k, t_{k+1}]\times R,\ast} &\leq \| \Pi_t^{{p}} a(t,z) |y|^{1/2} - \Pi_t^{{p}} \Pi_x^{{1}} a(t,z) |y|^{1/2}\|_{r, \frac{1}{2}, (t_k, t_{k+1}]\times R,\ast} \\ & \qquad + \|\Pi_t^{{p}} {\check{v}} - \Pi_x^{{1}}\Pi_t^{{p}} {\check{v}}\|_{r, \frac{1}{2}, (t_k, t_{k+1}]\times R,\ast}\ .
\end{align*}
For the first term we note with Lemma \ref{lemma3.3a}:
\begin{align*}
&\| \Pi_t^{{p}} a(t,z) |y|^{1/2} - \Pi_t^{{p}} \Pi_x^{{1}} a(t,z) |y|^{1/2}\|_{r, \frac{1}{2}, (t_k, t_{k+1}]\times R,\ast}\\
&\leq \| \Pi_t^{{p}} a(t,z) |y|^{1/2}- \Pi_t^{{p}} \Pi_z^{{1}} a(t,z) |y|^{\frac{1}{2}} + \Pi_t^{{p}} \Pi_z^{{1}} a(t,z) |y|^{\frac{1}{2}} - \Pi_t^{{p}} \Pi_z^{{1}} a(t,z) \Pi_y^{{1}} |y|^{1/2}\|_{r, \frac{1}{2}, (t_k, t_{k+1}]\times R,\ast}\\
&\leq \|\Pi_t^{{p}} a(t,z) - \Pi_t^{{p}} \Pi_z^{{1}} a(t,z)\|_{r,  \frac{1}{2},(t_k, t_{k+1}]\times I,\ast} \||y|^{\frac{1}{2}}\|_{ \widetilde{H}^{\frac{1}{2}}(I)  }  \\ & \qquad +\| \Pi_t^{{p}} \Pi_z^{{1}} a(t,z)\|_{r, \frac{1}{2}, (t_k, t_{k+1}]\times I, \ast} \||y|^{1/2} - \Pi_y^{{1}}|y|^{\frac{1}{2}}\|_{\widetilde{H}^{\frac{1}{2}}(I) } \ .
\end{align*}
Now note that 
$$\|\Pi_t^{{p}} a(t,z) - \Pi_t^{{p}} \Pi_z^{{1}} a(t,z)\|_{r, \frac{1}{2}, (t_k, t_{k+1}]\times I,\ast} \leq C \|\Pi_t^{{p}} a(t,z)\|_{r, 2, (t_k, t_{k+1}]\times I} h^{\frac{3}{2}}$$
and, from Lemma \ref{keylemmagrad2}, 
$$\||y|^{1/2} - \Pi_y^{{1}}|y|^{\frac{1}{2}}\|_{\widetilde{H}^{\frac{1}{2}}(I) } \lesssim h^{\min\{\frac{\beta}{2}, \frac{3}{2}\}{-\varepsilon}}\ .$$
After possibly expanding finitely many terms, which may be treated as above, we may assume that the regular part $\check{v}$ in \eqref{decompostionEdget} is in $H^{3}$ in space. 

To approximate the regular part $\check{v}$, we let {$U$} denote the interpolant of $\check{v}$  in space and time on the graded mesh and use Lemma \ref{TobiasTRACE}. 
On $Q:=[0,1] \times [0,1]$, decomposed into rectangles $R_{jk}:=[x_{j-1},x_{j}] \times [y_{k-1},y_{k}]$ with side length $ h_{j},h_{k}$,
\begin{align*}
	\| \check{v} -{U}\|^2_{r,0,Q} &= \sum\limits_{l}\sum\limits_{j,k=1}^N\| \check{v}-{U}\|_{r, 0, [t_l, t_{l+1})\times R_{jk}}^{2} \\ &\lesssim \sum\limits_{l} \sum\limits_{j,k=1}^N \Big(\max\{h_j, \Delta t\}^4 \| \check{v}_{xx}\|^2_{r, 0, [t_l, t_{l+1})\times R_{jk}} + \max\{h_k, \Delta t\}^4\| \check{v}_{yy}\|^2_{r, 0, [t_l, t_{l+1})\times R_{jk}} \\ &\qquad+ (\max\{h_j, \Delta t\}^4+\max\{h_k, \Delta t\}^4)\| \check{v}_{tt}\|^2_{r, 0, [t_l, t_{l+1})\times R_{jk}}  \\ &\qquad+  \max\{h_j, \Delta t\}^{4}\max\{h_k, \Delta t\}^2 \| \check{v}_{xxy}\|_{r, 0, [t_l, t_{l+1})^{{2}}\times R_{jk}} \Big) \\&
	\lesssim \max\{h, \Delta t\}^4\| \check{v}\|^2_{r, 3, Q}
\end{align*}
and
\begin{align*}
	\| \check{v} -{U}\|^2_{r, 1, Q} &= \sum\limits_{l} \sum\limits_{j,k=1}^N \| \check{v}-{U}\|^2_{r, 1, [t_l, t_{l+1})\times R_{jk}} \\&
	\lesssim \sum\limits_{l}\sum\limits_{j,k=1}^N\Big(\max\{h_j,\Delta t\}^2\| \check{v}_{xx}\|^2_{r, 0, [t_l, t_{l+1})\times R_{jk}}+ \max\{h_k,\Delta t\}^2\| \check{v}_{yy}\|^2_{r, 0, [t_l, t_{l+1})\times R_{jk}}\\ & \qquad+\max\{h_j,\Delta t\}^2\| \check{v}_{xt}\|^2_{r, 0, [t_l, t_{l+1})\times R_{jk}}+\max\{h_k,\Delta t\}^{4}\| \check{v}_{xxy}\|^2_{r, 0, [t_l, t_{l+1})\times R_{jk}}\\ & \qquad +\max\{h_k,\Delta t\}^{{2}}\| \check{v}_{xyy}\|^2_{r, 0, [t_l, t_{l+1})\times R_{jk}} \Big)\\ &
	\lesssim\max\{h, \Delta t\}^2\| \check{v}\|^2_{r, 3,Q} \ .
\end{align*}
Here we have used $h_{k} \leq \beta \ h$ {and used the restriction $\|\ \cdot\ \|_{r, 0, [t_l, t_{l+1})\times R_{jk}}$ of the $H^r_\sigma(\mathbb{R}^+, H^0(R_{jk}))$ to the time interval $[t_l, t_{l+1})$.}
Interpolation yields $\| \check{v} -{U}\|_{r,\frac{1}{2},Q, \ast}  \lesssim \max\{h, \Delta t\}^{\frac{3}{2}-\varepsilon}\| \check{v}\|_{r, 3,Q}$.
\end{proof}
The approximation argument extends from rectangular to triangular elements as in \cite{disspetersdorff}.

\subsection{Singularities for polygonal screens and approximation}

We consider the singular expansion of the solution to the wave equation \eqref{eq:oriProblem2} with Dirichlet or Neumann boundary conditions on a polygonal screen $\Gamma$. Additional singularities now arise from the corners of the screen. For simplicity, we restrict ourselves to the model case of a square screen $\Gamma=(0,1)\times(0,1)\times\{0\} \in \R^3$. In this geometry, for elliptic problems asymptotic expansions and their implications for the numerical approximation are discussed in \cite{screenMaischak, petersdorff}.

The following result gives  a decomposition of  the solution to the Helmholtz equation  and its normal derivative on $\Gamma$ near the vertex $(0,0)$, in terms of polar coordinates $(r,\theta)$ centered at this point. Note that we have two boundary values, $\hat{u}_\pm$, from the upper and lower sides of the screen. 

\begin{theorem}\label{decompS}
For fixed $\omega \neq 0$ with $\mathrm{Im}\ \omega \geq 0$, let $\hat{u}_\omega$ be the solution to the Helmholtz equation 
\begin{align}\label{helmholtzneu} \nonumber
\omega^2\hat{u}(\omega,x)- \Delta \hat{u}(\omega,x)=0,~ x\in \mathbb{R}^n \setminus \Gamma\ ,
\\ B\hat{u}(\omega,x)=\hat{g}(\omega,x),~x\in \Gamma\ ,
\end{align}
{where $\hat{g}$ is sufficiently smooth.}
a) Assume $Bu = \partial_\nu u|_\Gamma$. If $\hat{g} \in  H^1(\Gamma)$, then
\begin{align}\label{decompositionS}
\hat{u}(\omega, x)|_+ &= \chi(r)r^{\gamma} \alpha_\omega(\theta) +\tilde{\chi}(\theta)b_{1,\omega}(r)(\sin(\theta))^{\frac{1}{2}} \\& \nonumber \qquad + \tilde{\chi}(\frac{\pi}{2}-\theta)b_{2, \omega}(r)(\cos(\theta))^{\frac{1}{2}}+\hat{u}_{0,\omega}(r, \theta)\ ,
\end{align}
where for all $\epsilon>0$ we have $\hat{u}_{0,\omega}\in \widetilde{H}^{2-\epsilon}(\Gamma)$, $\alpha_\omega\in H^{2-\epsilon}[0,\frac{\pi}{2}],\ b_{i,\omega}=c_{i,\omega, 1}r^{\gamma-\frac{1}{2}}+ c_{i,\omega,2}r^{\lambda-\frac{1}{2}}+d_{i,\omega}(r)$, $d_{i,\omega}(r)\in H^{\frac{3}{2}-\varepsilon}(\R^+)$ with $r^{\frac{3}{2}-\varepsilon} d_{i,\omega}(r) \in L^2(\mathbb{R^+})$, $c_{i,\omega,j}\in \R$. Here $\chi, \tilde{\chi} \in C^\infty_c$ are cut-off functions, $\chi, \tilde{\chi}= 1$ in a neighborhood of $0$.\\
b) Assume $Bu = u|_\Gamma$. If $\hat{g} \in H^2(\Gamma)$, then
\begin{align*}
\partial_\nu \hat{u}(\omega,x)|_+&= \chi(r)r^{\gamma-1} \alpha_\omega(\theta) +\tilde{\chi}(\theta)b_{1,\omega}(r)r^{-1}(\sin(\theta))^{-\frac{1}{2}}\\ & \qquad + \tilde{\chi}(\frac{\pi}{2}-\theta)b_{2, \omega}(r)r^{-1}(\cos(\theta))^{-\frac{1}{2}} + \hat{\psi}_{0,\omega}(r, \theta)\ ,
\end{align*}
where for all $\epsilon>0$ we have $\hat{\psi}_{0,\omega}\in H^{1-\epsilon}(\Gamma)$, $\alpha_\omega\in H^{1-\epsilon}[0,\frac{\pi}{2}],~b_{i,\omega}=c_{i,\omega}r^\gamma+d_{i,\omega}(r)$, $r^{-\frac{1}{2}}d_{i,\omega}(r)\in H^1(\R^+),~r^{-\frac{3}{2}}d_{i,\omega}(r)\in L_2(\R^+),~ c_{i,\omega}\in \R$. Here $\chi, ~\tilde{\chi} \in C^\infty_c$ are cut-off functions, $\chi, \tilde{\chi}= 1$ in a neighborhood of $0$. 
\end{theorem}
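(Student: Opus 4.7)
The plan is to reduce the Helmholtz problem \eqref{helmholtzneu} to a lower-order perturbation of the Laplace equation on $\mathbb{R}^3 \setminus \Gamma$ and to invoke the corner-edge singular decomposition for the Laplacian on a polygonal screen from \cite{petersdorff, screenMaischak}. Writing $-\Delta \hat u = -\omega^2 \hat u$, the right-hand side is of strictly lower order in $\hat u$, so by a Mellin/bootstrap iteration the frequency $\omega$ enters only through the coefficients and the regularity of the remainder, leaving the set of singular exponents fixed. These exponents are determined by the Mellin symbol of the Laplacian on the model cone at the vertex, namely $\mathbb{R}^3$ slit along the quarter-plane $\{x_1,x_2\geq 0,\ x_3=0\}$: the eigenvalues $\mu_k$ of the Laplace--Beltrami operator on the correspondingly slit sphere, with Neumann (for a)) or Dirichlet (for b)) data, yield through the standard formula the two leading corner exponents $\gamma$ and $\lambda$.

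I would first localize by $\chi$ to a neighborhood of the vertex and by $\tilde{\chi}$ to conical neighborhoods of the two incident edges. In each conical edge neighborhood, Theorem \ref{maintheorem} applied to the straight wedge (in its fixed-frequency specialization) yields the edge singularity $(\sin\theta)^{1/2}$, respectively $(\cos\theta)^{1/2}$ after rotating to the second edge, with a coefficient that is smooth along the edge away from the corner. In the vertex-localized region away from the edges, Kondratiev/Plamenevskii theory on the slit sphere gives the corner expansion $r^\gamma\alpha_\omega(\theta)$ plus a remainder belonging to $\widetilde{H}^{2-\varepsilon}$ on $\Gamma$ (respectively $H^{1-\varepsilon}$ for the normal trace in b)).

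The main obstacle is the matching in the region where both the corner and an edge are close. Here the edge coefficient inherited from the wedge expansion must itself be expanded as $r\to 0$: consistency with the corner expansion forces it to carry precisely the two powers $r^{\gamma-\frac{1}{2}}$ and $r^{\lambda-\frac{1}{2}}$, obtained by dividing the corner powers $r^\gamma, r^\lambda$ by the trigonometric boundary weight $(\sin\theta)^{1/2}$, plus a remainder $d_{i,\omega}(r)$ in $H^{3/2-\varepsilon}$ with the asserted weighted $L^2$ control. A careful pole count of the Mellin transform on a contour inside the Kondratiev strip just below the $\widetilde{H}^{2-\varepsilon}$ threshold, avoiding the spectrum of the model-cone operator, shows that exactly the two listed corner singular terms lie above the remainder; the hypothesis $\hat g \in H^1(\Gamma)$ (resp.~$H^2(\Gamma)$) ensures that the boundary data do not contribute extra singular levels.

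Part b) then follows by applying $\partial_\nu$ to the decomposition of part a): the corner term $r^\gamma\alpha_\omega(\theta)$ produces $r^{\gamma-1}\alpha_\omega(\theta)$, while each edge term $b_{i,\omega}(r)(\sin\theta)^{1/2}$ produces $b_{i,\omega}(r)r^{-1}(\sin\theta)^{-1/2}$, the factor $r^{-1}$ arising from expressing the normal derivative in the polar coordinates of the edge. The regularity classes of the remainders $d_{i,\omega}$ shift accordingly to $r^{-1/2}d_{i,\omega}\in H^1(\mathbb{R}^+)$ and $r^{-3/2}d_{i,\omega}\in L^2(\mathbb{R}^+)$, matching the claimed conditions.
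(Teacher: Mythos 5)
The paper itself does not prove Theorem \ref{decompS}: it refers to Holm--Maischak--Stephan \cite{heuer}, pp.~108--109, where the decomposition is obtained from the elliptic screen decompositions of Stephan and von Petersdorff--Stephan via the boundary integral formulation, with the $\omega$-dependent terms entering only as smoother perturbations. Your overall strategy (treat $-\omega^2\hat u$ as a lower-order term, Kondratiev/Mellin analysis on the slit sphere at the vertex, wedge analysis at the edges) is in the right spirit and could in principle be carried out, but as written it has two genuine gaps. The first is the corner--edge interaction, which is the actual content of the theorem beyond the pure corner term $r^\gamma\alpha_\omega(\theta)$ and the pure edge factor $(\sin\theta)^{1/2}$: the claim that the edge coefficients have exactly the form $b_{i,\omega}=c_{i,\omega,1}r^{\gamma-\frac12}+c_{i,\omega,2}r^{\lambda-\frac12}+d_{i,\omega}$ with $d_{i,\omega}\in H^{\frac32-\varepsilon}(\mathbb{R}^+)$ and the weighted $L^2$ control is justified only by ``consistency with the corner expansion forces it'' and an unspecified ``pole count.'' Consistency of leading orders does not produce this tensor-product structure; one needs an actual two-step (edge-then-corner) Mellin argument, or the Wiener--Hopf/pseudodifferential machinery of \cite{petersdorff, petersdorff2, heuer}, together with a proof that the remainder lands in the stated spaces and that $\hat g\in H^1(\Gamma)$ (resp.\ $H^2(\Gamma)$) admits no further singular levels. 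As it stands, this step restates the assertion rather than proving it.

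The second gap is structural: part b) cannot be deduced from part a) by applying $\partial_\nu$. Part a) concerns the Neumann problem (data $\partial_\nu u|_\Gamma=\hat g$, expansion of the trace $\hat u|_+$), while part b) concerns the Dirichlet problem (data $u|_\Gamma=\hat g$, expansion of $\partial_\nu\hat u|_+$); these are different boundary value problems with different solutions, so differentiating the expansion of a) says nothing about b). Moreover, even for one and the same solution, $\partial_\nu$ is the out-of-plane derivative on the screen and is not determined by the in-plane trace; your factor $r^{-1}$ ``from expressing the normal derivative in the polar coordinates of the edge'' implicitly treats $\partial_\nu$ as an in-plane (edge-normal or radial) derivative, which is not what the theorem asserts. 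Part b) requires its own analysis, e.g.\ the Dirichlet slit-sphere eigenvalue problem at the vertex and the corresponding edge expansion of the Neumann trace (equivalently, in the cited reference, the decomposition of the density of the weakly singular integral equation), with the regularity of $\hat\psi_{0,\omega}$ and $d_{i,\omega}$ established directly rather than ``shifted accordingly.''
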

In fact, if $\hat{g}$ is a Schwartz function of $\omega$, the decomposition depends smoothly on this variable. For the square screen $\gamma \approx 0.2966$ and $\lambda \approx 1.426$ are determined by the lowest eigenvalues of the operator $\mathfrak{A}_B$ on $S^2 \setminus (\mathbb{R}_+^2 \times \{0\})$.
For the proof of Theorem \ref{decompS}, see \cite{heuer}, p. 108-109.

As above, in analogy with the work of Plamenevskii and coauthors, the asymptotic expansion translates into the time domain:
\begin{align}\label{decompositiont}\nonumber
{u(t,x)|_+} &=v_{0}(t,r, \theta) + \chi(r)r^{\gamma} \alpha(t,\theta) +\tilde{\chi}(\theta)b_{1}(t,r)(\sin(\theta))^{\frac{1}{2}}\\& \qquad + \tilde{\chi}(\textstyle{\frac{\pi}{2}}-\theta)b_{2}(t,r)(\cos(\theta))^{\frac{1}{2}} \ ,\\
\nonumber
{\partial_\nu u(t,x)|_+} &=\psi_{0}(t,r, \theta) + \chi(r)r^{\gamma-1} \alpha(t,\theta) +\tilde{\chi}(\theta)b_{1}(t,r)r^{-1}(\sin(\theta))^{-\frac{1}{2}}\\& \qquad + \tilde{\chi}(\textstyle{\frac{\pi}{2}}-\theta)b_{2}(t,r)r^{-1}(\cos(\theta))^{-\frac{1}{2}} \ . \label{decomposition}
\end{align}
To control the remainder terms in these formal computations requires elliptic a priori weighted estimates near the singularities, as discussed in \cite{matyu}.

From the decomposition, similar to Theorem \ref{approxtheorem1} we obtain  optimal approximation properties on the graded mesh, where the error is dominated by the edge singularities, not the corners. The beta needs to be chosen large enough, depending on the singular exponent $\gamma$ in \eqref{decompositiont}, \eqref{decomposition}. See \cite{disspetersdorff, petersdorff} for similar results in the time-independent case. 

\begin{theorem}\label{approxtheorem2} Let $\varepsilon>0$.
a) Let $u$ be a strong solution to the homogeneous wave equation with inhomogeneous Neumann boundary conditions $\partial_\nu u|_\Gamma = g$, with $g$ smooth. Further, let $\phi_{h,\Delta t}^\beta$ be the best approximation {in the norm of ${H}^{r}_\sigma(\R^+, \widetilde{H}^{\frac{1}{2}-s}(\Gamma))$ to the Dirichlet trace $u|_\Gamma$ in $\widetilde{V}^{p,1}_{\Delta t,h}$} on a $\beta$-graded spatial mesh with $\Delta t \lesssim h^\beta$ and $\beta \geq \frac{3}{2(\gamma + \frac{1}{2})}$. Then $\|u-\phi_{h, \Delta t}^\beta\|_{r,\frac{1}{2}-s, \Gamma, \ast} \leq C_{\beta,\varepsilon} h^{\min\{\frac{\beta}{2}, \frac{3}{2}\}+s{-\varepsilon}}$, where $s \in [0,\frac{1}{2}]$ and {$r \in [0,p)$}. 

b) Let $u$ be a strong solution to the homogeneous wave equation with inhomogeneous Dirichlet boundary conditions $u|_\Gamma = g$, with $g$ smooth. Further,  let $\psi_{h,\Delta t}^\beta$ be the best approximation  {in the norm of ${H}^{r}_\sigma(\R^+, \widetilde{H}^{-\frac{1}{2}}(\Gamma))$ to the Neumann trace $\partial_\nu u|_\Gamma$ in ${V}^{p,0}_{\Delta t,h}$} on a $\beta$-graded spatial mesh  with $\Delta t \lesssim h^\beta$ and $\beta \geq \frac{3}{2(\gamma + \frac{1}{2})}$. Then $\|\partial_\nu u-\psi_{h, \Delta t}^\beta\|_{r,-\frac{1}{2}, \Gamma, \ast} \leq C_{\beta,\varepsilon} h^{\min\{\frac{\beta}{2}, \frac{3}{2}\}{-\varepsilon}}$, where {$r \in [0,p+1)$}.  
\end{theorem}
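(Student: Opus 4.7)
The strategy follows the approach for the circular screen in Theorem \ref{approxtheorem1}, replacing the simple edge expansions \eqref{decompostionEdget}, \eqref{decompositionEdge} by the richer decompositions \eqref{decompositiont} (for part a) and \eqref{decomposition} (for part b). Each trace is split into a corner singularity supported in $\{\chi(r) \neq 0\}$, two edge singularities cut off by $\tilde{\chi}(\theta)$ and $\tilde{\chi}(\pi/2-\theta)$, and a smooth remainder $v_0$, resp.~$\psi_0$. The plan is to bound the best-approximation error of each piece on the $\beta$-graded mesh and to verify that the lower bound $\beta \geq \frac{3}{2(\gamma+\frac{1}{2})}$ ensures the corner contribution is dominated by the edge contribution, so that the overall rate is governed by the edges.

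For the edge pieces $\tilde{\chi}(\theta) b_i(t,r)(\sin\theta)^{\pm 1/2}$ and their $\cos\theta$ analogue, one localizes to a strip around a single straight edge, at positive distance from the corner. In local coordinates $(y,z)$ normal and tangential to that edge the singular factor reduces to $|y|^{\pm 1/2}$, exactly the situation treated on the circular screen. Lemmas \ref{lemma3.3}, \ref{lemma3.3a}, \ref{keylemmagrad} and \ref{keylemmagrad2} then produce the bounds $h^{\min\{\beta/2,\, 3/2\}+s-\varepsilon}$ in part (a), respectively $h^{\min\{\beta/2,\, 3/2\}-\varepsilon}$ in part (b), exactly as for the circle. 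The smooth remainders lie in $H^r_\sigma(\R^+, \widetilde{H}^{2-\varepsilon}(\Gamma))$ and $H^r_\sigma(\R^+, H^{1-\varepsilon}(\Gamma))$ by Theorem \ref{decompS}, so Lemmas \ref{lemma3.4} and \ref{TobiasTRACE} with bilinear interpolation yield the maximal spatial rate $h^{3/2+s-\varepsilon}$, resp.~$h^{3/2-\varepsilon}$, with no restriction on $\beta$.

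The main technical obstacle is the corner contribution $\chi(r) r^\gamma \alpha(t,\theta)$ in part (a), and $\chi(r) r^{\gamma-1}\alpha(t,\theta)$ in part (b). This is a genuinely two-dimensional singularity which has to be analyzed on the 2D $\beta$-graded mesh radiating from the vertex. The plan is to first apply $\Pi_t^p$ in time and then decouple the $t$ and $(r,\theta)$ variables by the tensor-product Lemmas \ref{lemma3.3}, \ref{lemma3.3a}, reducing matters to purely spatial best-approximation estimates for $r^{\gamma}$ in $\widetilde{H}^{\frac{1}{2}-s}(\Gamma)$, resp.~$r^{\gamma-1}$ in $\widetilde{H}^{-\frac{1}{2}}(\Gamma)$, on the 2D graded mesh. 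These are the 2D radial analogues of Lemmas \ref{keylemmagrad}, \ref{keylemmagrad2} and are imported from the elliptic analysis of \cite{disspetersdorff, petersdorff}; they give rates $h^{\min\{\beta(\gamma+\frac{1}{2}+s),\, 3/2+s\}-\varepsilon}$ and $h^{\min\{\beta(\gamma+\frac{1}{2}),\, 3/2\}-\varepsilon}$ respectively. Lemma \ref{lemma3.2} is used to assemble the local estimates into a global one.

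Under the hypothesis $\beta \geq \frac{3}{2(\gamma+\frac{1}{2})}$ both $\beta(\gamma+\frac{1}{2})$ and $\beta(\gamma+\frac{1}{2}+s)$ exceed $\frac{3}{2}$, so the corner error is bounded by $h^{\frac{3}{2}+s-\varepsilon}$, resp.~$h^{\frac{3}{2}-\varepsilon}$, which is never worse than the edge error $h^{\min\{\beta/2,\,3/2\}+s-\varepsilon}$, resp.~$h^{\min\{\beta/2,\,3/2\}-\varepsilon}$. Summing the three contributions therefore yields the stated rate. The transition from rectangular to triangular elements is performed as in \cite{disspetersdorff}.
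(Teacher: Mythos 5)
Your overall architecture matches the paper's: the smooth remainder and the pure edge terms are handled exactly as in the circular-screen proof of Theorem \ref{approxtheorem1}, and the corner term $\chi(r)r^{\gamma-1}\alpha(t,\theta)$ (resp.\ $r^{\gamma}\alpha$) is treated by first projecting in time and then invoking elliptic graded-mesh approximation of $r^{\gamma-1}$, resp.\ $r^{\gamma}$, with the hypothesis $\beta\geq \tfrac{3}{2(\gamma+1/2)}$ guaranteeing that the corner contribution does not fall below the edge rate. The paper does not simply quote a 2D spatial rate $h^{\min\{\beta(\gamma+\frac12),\frac32\}}$, however: it proves the space-time bound directly, element by element, via \eqref{3.22}, using $h_k\lesssim h\,x^{\tilde\gamma}$ with $\tilde\gamma=1-\frac1\beta$ for $k,l\geq 2$, a separate argument for $k=1$ or $l=1$, and a duality estimate on the corner element $R_{11}$; the threshold \eqref{betaestimation} emerges there as the condition for finiteness of the weighted integrals. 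Your route (decouple $t$ by a tensor-product estimate, then cite the elliptic results behind Theorems \ref{theo:77gamma} and \ref{keyucorner}) is acceptable in spirit -- the paper itself remarks that $\Pi_t^p\alpha$ has the same form as the elliptic singular function -- but note that Lemmas \ref{lemma3.3} and \ref{lemma3.3a} as stated decouple two \emph{spatial} variables, one of them tied to time; decoupling time from the genuinely two-dimensional factor $r^{\gamma-1}\alpha_m(\theta)$ in the anisotropic norm needs a (small) additional argument, since the weight $(\sigma^2+|\omega|^2+|\xi|^2)^{s}$ couples $\omega$ and $\xi$.

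The genuine gap is your treatment of the edge pieces near the vertex. The terms $\tilde{\chi}(\theta)b_{1}(t,r)r^{-1}(\sin\theta)^{-1/2}$ in \eqref{decomposition} (and their counterparts in \eqref{decompositiont}) are cut off only in the angular variable, so their support reaches the corner $r=0$, and there the tangential coefficient is itself singular: by Theorem \ref{decompS}, $b_{i}= c_i r^{\gamma}+d_i(r)$ with $d_i$ of limited weighted regularity, so near the vertex these terms behave like $r^{\gamma-1}(\sin\theta)^{-1/2}$ -- the corner-edge singularity. Your claim that "one localizes to a strip around a single straight edge, at positive distance from the corner" therefore does not cover the whole support, and the circular-screen argument you import breaks down there, because it relies on the tangential smoothness of $b(t,z)$ (e.g.\ the bound $\|\Pi_t^p b-\Pi_t^p\Pi_z^0 b\|_{r,\varepsilon-\frac12}\lesssim h^{3/2}\|\Pi_t^p b\|_{r,1+\varepsilon}$ after Lemma \ref{keylemmagrad}). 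The paper explicitly isolates this corner-edge singularity as a third piece, distinct from the pure corner function $r^{\gamma-1}\alpha(t,\theta)$, and states that it is handled by adapting the corresponding elliptic estimates of von Petersdorff; your proposal omits it entirely, and since the admissible $\beta$ is precisely governed by how the corner interacts with the edge behaviour, this piece cannot be absorbed into either of the two contributions you do estimate without an argument.
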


The theorem again implies a corresponding result for the solutions of the single layer and hypersingular integral equations on the screen:
\begin{corollary}\label{approxcor2} Let $\varepsilon>0$.
a) Let $\phi$ be the solution to the hypersingular integral equation \eqref{hypersingeq} and  $\phi_{h,\Delta t}^\beta$ the best approximation {in the norm of ${H}^{r}_\sigma(\R^+, \widetilde{H}^{\frac{1}{2}-s}(\Gamma))$ to $\phi$ in $\widetilde{V}^{p,1}_{\Delta t,h}$} on a $\beta$-graded spatial mesh  with $\Delta t \lesssim h^\beta$ and  and $\beta \geq \frac{3}{2(\gamma + \frac{1}{2})}$. Then $\|\phi-\phi_{h, \Delta t}^\beta\|_{r,\frac{1}{2}-s, \Gamma, \ast} \leq C_{\beta,\varepsilon} h^{\min\{\beta(\frac{1}{2}+s), \frac{3}{2}+s\}{-\varepsilon}}$, where $s \in [0,\frac{1}{2}]$ and {$r \in [0,p)$}. 

b) Let $\psi$ be the solution to the single layer integral equation \eqref{dirproblemV} and  $\psi_{h,\Delta t}^\beta$ the best approximation  {in the norm of ${H}^{r}_\sigma(\R^+, \widetilde{H}^{-\frac{1}{2}}(\Gamma))$ to $\psi$ in ${V}^{p,0}_{\Delta t,h}$} on a $\beta$-graded spatial mesh  with $\Delta t \lesssim h^\beta$ and  and $\beta \geq \frac{3}{2(\gamma + \frac{1}{2})}$. Then $\|\psi-\psi_{h, \Delta t}^\beta\|_{r,-\frac{1}{2}, \Gamma, \ast} \leq C_{\beta,\varepsilon} h^{\min\{\frac{\beta}{2}, \frac{3}{2}\}{-\varepsilon}}$, where {$r \in [0,p+1)$}.  
\end{corollary}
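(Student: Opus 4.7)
My plan is to deduce Corollary \ref{approxcor2} directly from Theorem \ref{approxtheorem2} by identifying the densities $\phi$ and $\psi$ on the flat polygonal screen with jumps of solutions of the wave equation across $\Gamma$. Once this identification is established, the singular expansions \eqref{decompositiont} and \eqref{decomposition} are inherited by $\phi$ and $\psi$, and the approximation analysis carries over verbatim.

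First I would justify the identification claimed in the paragraph following Theorem A. Given smooth Neumann data $g$, let $u$ be the strong solution of the homogeneous wave equation outside $\Gamma$ with $\partial_\nu u|_\Gamma = g$. Representing $u$ by the double layer potential \eqref{doubleansatz} with density $\phi$ and applying the standard jump relations for retarded potentials on a flat screen, one sees that $[u]|_\Gamma = \phi$ and that $\phi$ solves $W\phi = g$ in the sense of \eqref{weakformW}; the well-posedness asserted after \eqref{weakformW} then identifies this $\phi$ with the one in Corollary \ref{approxcor2}(a). The argument for $\psi = [\partial_\nu u]|_\Gamma$ and $V\psi = f$ is analogous, via the single layer ansatz \eqref{singlay}. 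Because $\Gamma \subset \mathbb{R}^2 \times \{0\}$ is flat, reflection symmetry across this plane relates the two sides: the double layer potential produces an odd extension of $u$ across $\Gamma$ and the single layer an even one, so the asymptotic decompositions \eqref{decompositiont}, \eqref{decomposition} for the one-sided traces $u|_\pm$ and $\partial_\nu u|_\pm$ imply expansions of the same form for the jumps $\phi$ and $\psi$, with time-dependent coefficients and remainders inherited from $a, b_i, \alpha, v_0, \psi_0$.

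Second, I would apply Theorem \ref{approxtheorem2} directly to $\phi$ and $\psi$. The proof of Theorem \ref{approxtheorem2} uses only the structural form of the expansions \eqref{decompositiont}, \eqref{decomposition}, combined with Lemmas \ref{lemma3.3}--\ref{TobiasTRACE} and the interpolation estimates of Lemmas \ref{keylemmagrad}--\ref{keylemmagrad2} on the $\beta$-graded mesh, to bound the best approximation in $\widetilde{V}^{p,1}_{\Delta t,h}$ respectively ${V}^{p,0}_{\Delta t,h}$. The condition $\beta \geq \frac{3}{2(\gamma + \frac{1}{2})}$ enters precisely to ensure the corner singularities $r^{\gamma}$ and $r^{\gamma-1}$ are resolved at least as accurately as the $\min\{\frac{\beta}{2}, \frac{3}{2}\}$-rate associated with the edge singularities $|y|^{\pm \frac{1}{2}}$, so that the final convergence rate is governed by the edge terms. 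With $\phi$ and $\psi$ in place of the one-sided traces, the same estimates yield the bounds stated in Corollary \ref{approxcor2}(a) and (b).

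The main point to verify is thus only in the first step: that the jump identification passes cleanly from the level of pointwise jump relations into the anisotropic Sobolev spaces ${H}^{r}_\sigma(\R^+, \widetilde{H}^{s}(\Gamma))$ in which Theorem \ref{approxtheorem2} is formulated. This uses the mapping properties of Theorem \ref{mappingproperties} together with the well-posedness results for \eqref{weakform} and \eqref{weakformW}. Once this is in place, the derivation of Corollary \ref{approxcor2} from Theorem \ref{approxtheorem2} is essentially a transcription; the only additional care is to confirm that the time-dependent coefficients and remainders appearing in the expansion of the jumps retain the smoothness required in the proof of Theorem \ref{approxtheorem2}, which follows from the smoothness of the data $f$, $g$ via the propagation of regularity for the homogeneous wave equation in the exterior of $\Gamma$.
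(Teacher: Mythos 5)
Your proposal takes the paper's own route: Corollary \ref{approxcor2} is obtained there precisely by identifying $\phi=[u]|_\Gamma$ and $\psi=[\partial_\nu u]|_\Gamma$ on the flat screen and invoking Theorem \ref{approxtheorem2} (whose proof contains all the graded-mesh and corner-singularity estimates), so your discussion of the jump relations and reflection symmetry merely makes explicit what the paper leaves as a one-line remark. The only caveat, inherited from the paper itself, is that part (a) of the corollary states the exponent $\min\{\beta(\tfrac12+s),\tfrac32+s\}$ while Theorem \ref{approxtheorem2}(a) gives $\min\{\tfrac{\beta}{2},\tfrac32\}+s$, so a literal transcription of your argument yields the latter (weaker for $\beta>1$, $s>0$) rate rather than the one printed in the corollary.
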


The proof of Theorem \ref{approxtheorem2} and Corollary \ref{approxcor2} relies on arguments by von Petersdorff \cite{disspetersdorff}. We refer to this reference for a detailed analysis in the time-independent case.

We recall a key elliptic result from \cite{petersdorff}, proven there for closed polyhedral surfaces:
\begin{theorem}\label{theo:77gamma}
  Let $ \psi \in \widetilde{H}^{-\frac{1}{2}}(\Gamma) $ have a singular decomposition like the one in Theorem \ref{decompS} near  every corner of $\Gamma$. 
Then we can approximate $ \psi $ for $ \beta \geq 1$ on the graded mesh in the following way:
With $ \phi_{h} = \Pi_x^{{0}} \psi$ we have for all $ \epsilon > 0$
\begin{equation*}
   \| \psi - \phi_h \|_{\widetilde{H}^{-\frac{1}{2}}(\Gamma)} \leq C_{\beta, \varepsilon} h^{\min\{ \frac{\beta}{2}, \frac{3}{2} \}{-\varepsilon}}\ .
\end{equation*}
\end{theorem}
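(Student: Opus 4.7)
The plan is to adapt the approach of von Petersdorff \cite{disspetersdorff,petersdorff} to the present setting. Since this result is purely spatial, the time-dependent arguments from the preceding section are not needed: I will use only the \emph{spatial} content of Lemmas \ref{lemma3.2}, \ref{lemma3.3}, and \ref{keylemmagrad}. The starting point is the singular decomposition of $\psi$ near every corner provided by Theorem \ref{decompS}(b): writing the sum in schematic form,
\[
\psi = \psi_{\mathrm{reg}} + \sum_{\mathrm{corners}} \chi(r)\, r^{\gamma-1}\alpha(\theta) + \sum_{\mathrm{edges}} \tilde{\chi}\, b(r)\, r^{-1} s^{-1/2},
\]
where $s = \sin\theta$ or $\cos\theta$ measures distance to the relevant edge, and $\psi_{\mathrm{reg}}\in H^{1-\varepsilon}(\Gamma)$ is the remainder. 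The cut-offs $\chi,\tilde{\chi}$ disentangle the corner and edge supports, and Lemma \ref{lemma3.2} localizes the $\widetilde{H}^{-1/2}(\Gamma)$-error into contributions from a tubular neighborhood of each edge, a conical neighborhood of each corner, and the interior where the function is smooth.

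I would next estimate each piece separately against $\phi_h = \Pi_x^0\psi$. For the \emph{edge contribution}, I exploit the tensor-product structure of the $\beta$-graded mesh on a strip along each edge: the tangential variable carries a smooth profile $b(r)$, while the normal variable carries the singular factor $s^{-1/2}$. Apply Lemma \ref{lemma3.3} with $s_1$ small (absorbing the tangential derivative) and $s_2 = \tfrac{1}{2}$ to factorize the error, then Lemma \ref{keylemmagrad} with $a=\tfrac{1}{2}$, giving $\|s^{-1/2} - \Pi^0 s^{-1/2}\|_{\widetilde{H}^{-1/2}} \lesssim h^{\min\{\beta/2,\,3/2\}-\varepsilon}$, which is exactly the target rate. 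For the \emph{corner contribution} $r^{\gamma-1}\alpha(\theta)$, work in polar coordinates on the graded radial mesh $r_k = (k/N_l)^\beta$. The additional $r\,dr$ factor in the area element effectively reduces the singularity strength by one order, so a direct computation in Fourier/Mellin scale (analogous to Lemma \ref{keylemmagrad}) shows that the piecewise-constant error in $\widetilde{H}^{-1/2}$ is bounded by $h^{\min\{\beta(\gamma+\tfrac{1}{2}),\,3/2\}-\varepsilon}$. For $\beta\geq 1$ and $\gamma>0$ this dominates the required rate $h^{\min\{\beta/2,3/2\}-\varepsilon}$. For the \emph{regular remainder} $\psi_{\mathrm{reg}}\in H^{1-\varepsilon}$, the standard $L^2$-estimate on the quasi-uniform portion of the mesh together with a duality argument (via Aubin--Nitsche) yields $O(h^{3/2-\varepsilon})$ in $\widetilde{H}^{-1/2}$, which is again at least the claimed rate.

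The main obstacle, in my view, is the rigorous treatment of the overlap region near each vertex, where edge and corner singularities coexist and the cut-offs $\chi$, $\tilde{\chi}$ interact. The radial profile $b(r)=c_1 r^\gamma + d(r)$ appearing in the edge term of Theorem \ref{decompS} is itself singular at $r=0$, so the ``edge'' contribution in the corner zone is not simply a tensor product and Lemma \ref{lemma3.3} does not apply directly. The standard fix, used in \cite[Satz 3.7, Satz 3.10]{disspetersdorff}, is to further decompose $b(r)\,r^{-1}s^{-1/2}$ into its $r^{\gamma-1}$-part (which merges cleanly with the corner singularity and is treated in polar coordinates) and a smoother remainder (for which Lemma \ref{lemma3.3} applies without loss). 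Once this splitting is carried out, collecting the three estimates under Lemma \ref{lemma3.2} concludes the proof. The case distinction $\min\{\beta/2, 3/2\}$ reflects the trade-off between the strength of grading and the second-order saturation of the piecewise-constant interpolation in $\widetilde{H}^{-1/2}$.
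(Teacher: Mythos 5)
Your proposal is correct and takes essentially the same route as the argument the paper relies on: the paper itself does not prove Theorem \ref{theo:77gamma} but recalls it from \cite{petersdorff} (proven there for closed polyhedral surfaces), and your scheme --- split $\psi$ via Theorem \ref{decompS} into regular part, edge, corner and corner--edge functions, localize with Lemma \ref{lemma3.2}, factorize with the (purely spatial) Lemma \ref{lemma3.3}, and use the one-dimensional graded bound of Lemma \ref{keylemmagrad} with $a=\frac{1}{2}$ for the $y^{-1/2}$ edge factor (rate $\min\{\beta/2,3/2\}$) while the corner term $r^{\gamma-1}\alpha(\theta)$ contributes the dominated rate $\min\{\beta(\gamma+\frac{1}{2}),3/2\}$ and the $H^{1-\varepsilon}$ remainder gives $h^{3/2-\varepsilon}$ by the projection/duality argument --- is precisely the elliptic machinery of \cite{disspetersdorff,petersdorff}, which also forms the spatial core of the paper's own proof of Theorem \ref{approxtheorem2}. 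The one phrase to sharpen is your treatment of the corner--edge interaction: the leading part $r^{\gamma-1}(\sin\theta)^{-1/2}\approx x^{\gamma-\frac{1}{2}}y^{-\frac{1}{2}}$ does not literally merge with the pure corner singularity (its angular factor is not square-integrable, so the polar-coordinate corner estimate with $\alpha\in L^2$ does not apply), but it is exactly the anisotropic product singularity covered by the two-directional graded estimates of \cite[Satz 3.7, Satz 3.10]{disspetersdorff} that you invoke, so with that reading your sketch is consistent with the cited proof.
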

\begin{proof}{(of Theorem 20 b)} 
For simplicity, let $\Gamma$ be the square $Q=[0,1]^{2}$. As the approximation of the regular part $\psi_0$ and the regular edge functions of \eqref{decomposition} are already considered in the proof for the circular screen, it remains to analyze the approximation of the corner singularity and the corner edge singularity of the expansion \eqref{decomposition}. 
In the following we approximate the corner singularity:

In every space-time element we estimate
\begin{align*}
\|r^{\gamma-1} \alpha(t, \theta) - \Pi_t^{{p}} \Pi_{x,y}^{{0}} r^{\gamma-1} \alpha(t, \theta)\| & \leq \|r^{\gamma-1} \alpha(t, \theta) - \Pi_t^{{p}}  r^{\gamma-1} \alpha(t, \theta)\| \\ & \qquad + \|r^{\gamma-1} \Pi_t^{{p}} \alpha(t, \theta) -  \Pi_{x,y}^{{0}} r^{\gamma-1} \Pi_t^{{p}}\alpha(t, \theta)\| \ .
\end{align*}
$\Pi_t \alpha(t, \theta)$ is of the same form as the singular function $\alpha(\theta)$ in the elliptic case. One may therefore adapt the elliptic approximation results to $\|(1-\Pi_{x,y}) r^{\gamma-1} \Pi_t \alpha(t, \theta)\|$. This is then summed over all elements. We consider
\begin{equation*}
 \lVert r^{\gamma-1} \Pi_t^{{p}} \alpha - \Pi_{x y}^{{0}} r^{\gamma-1} \Pi_t^{{p}} \alpha \rVert = \lVert (1-\Pi_{x y}^{{0}}) r^{\gamma-1} \Pi_{t}^{{p}} \alpha(t,\theta) \rVert
\end{equation*}
Let $ \Pi_t^{{p}} \alpha(t,\theta) = \sum_{{m=0}}^{{p}} t^{{m}} \alpha_{{m}}(\theta)$ and $ f_{{m}}(x,y)=r^{\gamma-1} \alpha_{{m}}(\theta)$ on $[t_{{l}},t_{{l+1}})$. \\With $U|_{R_{kl}} =\sum_{{m=0}}^{{p}} \frac{t^{{m}}}{h_{k}h_{l}} \int\limits_{R_{kl}} f_{{m}}(x,y) dy dx $ one obtains from \eqref{3.22} 
\begin{align*}
 \lVert r^{\gamma-1} \Pi_t^{{p}} \alpha - \Pi_{x,y}^{{0}} r^{\gamma-1} \Pi_{t}^{{p}} \alpha \rVert_{r,-\frac{1}{2},Q,*}^{2} 
 &\lesssim \sum_j \sum_{k,l=1}^{{N}} \max \lbrace \Delta t , h_{k} , h_{l} \rbrace \\ &( h_{k}^{2} \lVert \partial_x (r^{\gamma-1} \Pi_t^{{p}} \alpha) \rVert_{r,0,[t_{j},t_{j+1}) \times R_{kl}}^{2} + h_{l}^{2} \lVert \partial_y (r^{\gamma-1} \Pi_t^{{p}} \alpha) \rVert_{r,0,[t_{j},t_{j+1}) \times R_{kl}}^{2} ) \\
 & + \lVert r^{\gamma-1} \Pi_t^{{p}} \alpha - \Pi_{x,y}^{{0}} r^{\gamma-1} \Pi_t^{{p}} \alpha \rVert_{r,-\frac{1}{2}, R_{11}}^{2}\ .
\end{align*}
The individual summands are estimated for different ranges of $k,l$:

\begin{figure}[ht]
\centering
\includegraphics[scale=0.7]{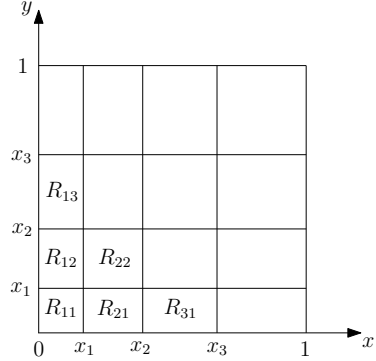}
\caption{Mesh on a square}
\label{fig:A1}
\end{figure}

Estimate for $k\geqslant 2,\; l\geqslant 2$: Note for $k\geqslant 2,\; x\in [x_{k-1},x_k]$ there holds $\vert h_k\vert \leq \beta 2^{\beta\tilde{\gamma}}h\, x^{\tilde{\gamma}}$ with $\tilde{\gamma} =1-\frac{1}{\beta}>0$. Therefore, if $ \Delta t  \leq h_{k} $ for all $k$
\begin{equation*}
	\max\{h_k, h_l, \Delta t\}h_k^2\| \partial_x (r^{\gamma-1} \Pi_t \alpha) \|_{r,0,[t_{j},t_{j+1}) \times R_{kl}}^{2} \lesssim h^{3} \| \partial_x (r^{\gamma-1} \Pi_t^{{p}} \alpha) \max \{ x^{\tilde{\gamma}} ,y^{\tilde{\gamma}} \}^{1/2} x^{\tilde{\gamma}} \|_{r,0,[t_{j},t_{j+1}) \times R_{kl}}^{2}
\end{equation*}
and
\begin{align}
 \lVert r^{\gamma-1} \Pi_t^{{p}} \alpha - \Pi_{x,y}^{{0}} r^{\gamma-1} \Pi_{t}^{{p}} \alpha\|_{r,-\frac{1}{2}, \bigcup_{k\geq2, l \geq 2} R_{kl},*}^{2}
 \lesssim h^{3} \lVert \partial_x (r^{\gamma-1} \Pi_t^{{p}} \alpha) \max \{ x^{\tilde{\gamma}} , y^{\tilde{\gamma}}\}^{{1/2}} x^{\tilde{\gamma}} \rVert_{r,0,Q}^2 \label{3.J} \\
 + h^{3} \lVert \partial_y (r^{\gamma-1} \Pi_t^{{p}} \alpha) \max \{ x^{\tilde{\gamma}} , y^{\tilde{\gamma}} \}^{{1/2}} y^{\tilde{\gamma}} \rVert_{r,0,Q}^2 \ .\nonumber
\end{align}
As $ |\partial_x (r^{\gamma-1} \Pi_t \alpha ) | \lesssim r^{\gamma-2} \tilde{\alpha}(t,\theta)$ for some 
$\tilde{\alpha}$ square-integrable in {$\theta$ and piecewise polynomial in $t$}, and $ \max \{ x^{\tilde{\gamma}} , y^{\tilde{\gamma}} \} \leq r^{\tilde{\gamma}}$, the right hand side of \eqref{3.J} is finite if 
\begin{equation}\label{betaestimation}
\beta>\frac{3}{2(\gamma+1/2)}\ .
\end{equation}
Therefore 
\begin{align*}
 \lVert r^{\gamma-1} \Pi_t^{{p}} \alpha - \Pi_{x,y}^{{0}} r^{\gamma-1} \Pi_{t}^{{p}} \alpha \rVert_{r,-\frac{1}{2}, \bigcup_{k\geq2, l \geq 2} R_{kl},*}^{2}
 \lesssim h^{3}\ ,
\end{align*}
provided $ \Delta t \leq h_{k}$ for all $k$.

Estimate for $k=1,\, l>1$ (analogously $k>1,\, l=1$): With $ f(x,y) = r^{\gamma-1} \alpha(\theta)$
\begin{align*}
 & \sum\limits_{j} \sum\limits_{l=2}^{N} \lVert (1-\Pi_{x y}^{{0}}) \Pi_t^{{p}} f \rVert_{r,-\frac{1}{2},[t_{j},t_{j+1}) \times R_{k,l},*}^{2}
 \\ &\leq \sum\limits_{j}\sum\limits_{l=2}^N\max\{\Delta t,h_k,h_l \}\left(h_1^2\Vert \partial_x (r^{\gamma-1} \Pi_t \alpha )\Vert^2_{r,0,[t_{j},t_{j+1}) \times R_{k,l},*} + h_l^2 \Vert \partial_y (r^{\gamma-1} \Pi_t \alpha ) \Vert^2_{r,0,[t_{j},t_{j+1}) \times R_{k,l},*} \right) \ .
\end{align*}
Proceed as in \eqref{3.J} to see that also this term is bounded for $\beta > \frac{3}{2 (\gamma+\frac{1}{2})}$.

Estimate for $k=1,\, l=1$: $r^{\gamma-1}\in L^2(R_{11})$ because $\gamma >0$, so the 
$L^{2}$-error on $R_{11}$ is $ \leq h_{1}^{3}$.

\begin{align*}
 & \|r^{\gamma-1} \Pi_t^{{p}} \alpha - \Pi_t^{{p}} \Pi_{x,y}^{{0}} r^{\gamma-1} \Pi_t^{{p}} \alpha(t, \theta)\|_{r,-\frac{1}{2},R_{11},*}^{2} \\
 & \lesssim \lVert (1-\Pi_{x y}^{{0}}) r^{\gamma-1} \Pi_{t}^{{p}} \alpha(t,\theta) \rVert_{r,-1,R_{11},*} \lVert (1-\Pi_{x y}^{{0}}) r^{\gamma-1} \Pi_{t}^{{p}} \alpha(t,\theta) \rVert_{r,0,R_{11},*} \ .
\end{align*}
The second term is $ \leq h^{\gamma}$. For the first term we obtain
\begin{align*}
 \lVert (1-\Pi_{x y}^{{0}}) r^{\gamma-1} \Pi_{t}^{{p}} \alpha(t,\theta) \rVert_{r,-1,R_{11},*} \equiv \sup\limits_{g \in H^{-1}(\mathbb{R^{+}},\tilde{H}^{1}(R_{11}))}
   \frac{\langle (1-\Pi_{x y}^{{0}}) r^{\gamma-1} \Pi_{t}^{{p}} \alpha(t,\theta) , g \rangle}{\lVert g \rVert_{-r,1,R_{11}}}\ .
\end{align*}
Replacing $g$ by $g-{G}$, where ${G}$ is the $ H^{-r}(\mathbb{R}^{+},H^{0}(R_{11}))$-projection of $g$, we obtain for $\Delta t \leq h_{1}$:
\begin{align*}
 \lVert (1-\Pi_{x y}^{{0}}) r^{\gamma-1} \Pi_{t}^{{p}} \alpha(t,\theta) \rVert_{r,-1,R_{11},*} \leq \lVert (1-\Pi_{x y}^{{0}}) r^{\gamma-1} \Pi_{t}^{{p}} \alpha(t,\theta) \rVert_{r,0,R_{11}} \sup\limits_{g} \frac{\lVert g-{G} \rVert_{-r,0,R_{11}}}{\lVert g \rVert_{-r,1,R_{11}}} \leq h_{1}^{\gamma} h_{1} \ .
\end{align*}
We conclude
\begin{equation*}
 \|r^{\gamma-1} \Pi_t^{{p}} \alpha - \Pi_t^{{p}} \Pi_{x,y}^{{0}} r^{\gamma-1} \Pi_t^{{p}} \alpha(t, \theta)\|_{r,-\frac{1}{2},R_{11},*}^{2}
 \lesssim h_{1}^{2 \gamma +1} \leq h^{3}\ .
\end{equation*}

The approximation of the corner-edge singularities $r^{-1}(\sin(\theta))^{-\frac{1}{2}}$ are similarly obtained from the elliptic results. For brevity we omit the details.
\end{proof}

The proof of Theorem 20 a) uses the following the key elliptic result in \cite{petersdorff}
for the trace $u|_\Gamma$ and follows analogously to the above case. It was  proven there for closed polyhedral surfaces.

\begin{theorem}\label{keyucorner}
  Let $u \in \widetilde{H}^{\frac{1}{2}}(\Gamma) $ have a singular decomposition like the one in Theorem \ref{decompS} near every corner of $\Gamma$. 
Then we can approximate $u$ {by piecewise linear functions  on the $\beta$-graded mesh for $\beta \geq 1$} in the following way: \\
For $ u_{h} = \Pi_x^{{1}} \psi$, we have for all $ \epsilon > 0$ and all $s \in [0,\frac{1}{2}]$
\begin{equation*}
   \| u - u_h \|_{H^{s}(\Gamma)} \leq C_{\beta, \varepsilon} h^{\min\{\beta(1-s), 2-s{\}}{-\varepsilon}}\ .
\end{equation*}
\end{theorem}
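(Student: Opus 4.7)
The plan is to mirror the structure of the proof of Theorem 20 b), using the explicit singular decomposition of $u|_\Gamma$ from Theorem \ref{decompS} a). On the square $Q=[0,1]^2$ with a corner at the origin, one writes
\[
u(x)|_+ = \chi(r)\, r^{\gamma}\alpha(\theta) + \tilde\chi(\theta)\, b_1(r)(\sin\theta)^{1/2} + \tilde\chi(\tfrac{\pi}{2}-\theta)\, b_2(r)(\cos\theta)^{1/2} + u_0(r,\theta),
\]
and splits the error $\|u - \Pi_x^1 u\|_{H^s(\Gamma)}$ into the contributions from the regular remainder $u_0$, the two corner--edge terms, and the pure corner singularity. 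The strategy is the same as for Theorem 20 b), except that the lower singularity exponent $\gamma$ (instead of $\gamma-1$) and the use of the bilinear interpolant $\Pi_x^1$ (instead of the piecewise constant $\Pi_x^0$) give a better rate, and one measures the error in $H^s$ rather than in $\widetilde H^{-1/2}$.

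First I would dispatch the regular part: since $u_0\in\widetilde H^{2-\varepsilon}(\Gamma)$, Lemma \ref{TobiasTRACE} (used in its space-only form, via the standard bilinear interpolation error) yields $\|u_0-\Pi_x^1 u_0\|_{H^s(\Gamma)}\lesssim h^{2-s-\varepsilon}$ on the quasi-uniform-away-from-corner mesh. For the two corner--edge terms I would exploit their tensor-product structure $b_i(r)\cdot(\sin\theta)^{1/2}$ (after a smooth localisation near an edge): Lemma \ref{lemma3.3a} reduces the two-dimensional estimate to one-dimensional ones in $r$ and $\theta$, the factor $b_i$ behaves like a combination of $r^\gamma$, $r^\lambda$ and an $H^{3/2-\varepsilon}$-remainder, and Lemma \ref{keylemmagrad2} with $a=\tfrac{1}{2}$, $a=\gamma$, $a=\lambda$ then gives in each factor a bound of the form $h^{\min\{\beta(1-s),2-s\}-\varepsilon}$. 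Multiplying the one-dimensional bounds and summing across the two edges meeting at the corner yields the required estimate.

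The main work is the pure corner singularity $\chi(r)\,r^\gamma\alpha(\theta)$, which is genuinely two-dimensional and does not factorise in Cartesian coordinates. I would adopt the element-by-element splitting used in the proof of Theorem 20 b), following Figure \ref{fig:A1}. For rectangles $R_{kl}$ with $k,l\geq 2$, I would use $h_k\leq \beta\,2^{\beta\tilde\gamma} h\, x_{k-1}^{\tilde\gamma}$ with $\tilde\gamma=1-1/\beta$ together with $|\partial_x(r^\gamma\alpha)|\lesssim r^{\gamma-1}\tilde\alpha(\theta)$ and $|D^2(r^\gamma\alpha)|\lesssim r^{\gamma-2}\tilde\alpha(\theta)$; the bilinear interpolation error on $R_{kl}$ then bounds by a weighted $L^2$-norm of $r^{\gamma-2}$ times a power of $\max\{x^{\tilde\gamma},y^{\tilde\gamma}\}$, and the sum converges precisely under the grading condition $\beta\geq 1$. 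On the strips $k=1,\,l\geq 2$ and $k\geq 2,\,l=1$ the same weighted argument applies. On the corner element $R_{11}$, $r^\gamma\in L^2(R_{11})$ so the $L^2$-error is $\lesssim h_1^{\gamma+1}\lesssim h^{\beta(\gamma+1)}$, while an $H^{-1}$-duality bound of the same type used in the proof of Theorem 20 b) gives $\lesssim h_1^{\gamma+2}$.

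The main obstacle will be to turn these elementwise $L^2$ and $H^{1}$ bounds into an estimate in the fractional-order norm $H^s(\Gamma)$ for $s\in(0,\tfrac{1}{2}]$, since the $H^s$-norm is non-local and cannot be summed elementwise without loss. My plan is to use interpolation $\|\cdot\|_{H^s}\leq \|\cdot\|_{L^2}^{1-2s}\|\cdot\|_{H^{1/2}}^{2s}$, or equivalently to exploit the Hahn--Banach/duality step employed at the end of the proof of Theorem 20 b), to pass from the $L^2$ and $H^1$ bounds obtained above to the desired $H^s$ estimate while preserving the exponent $\min\{\beta(1-s),2-s\}-\varepsilon$. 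Combining the corner, corner--edge and regular contributions then delivers the asserted rate. The passage from rectangular to triangular elements, as throughout this section, is routine and follows the reduction in \cite{disspetersdorff}.
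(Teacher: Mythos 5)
A preliminary remark on the comparison: the paper does not prove Theorem \ref{keyucorner} at all — it quotes it as an elliptic result from \cite{petersdorff} (proved there for closed polyhedral surfaces; see also \cite{disspetersdorff}), and only the time-dependent analogue, Theorem \ref{approxtheorem2}, is proved in the text. Your reconstruction follows the same family of ideas as those references and as the paper's proof of Theorem \ref{approxtheorem2} b): decomposition via Theorem \ref{decompS}, standard interpolation for the regular part, one-dimensional graded-mesh lemmas for the edge behaviour, weighted element-by-element sums for the corner, and interpolation to fractional $s$. However, two of your steps contain genuine gaps.

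For the pure corner term $\chi(r)r^{\gamma}\alpha(\theta)$, your claim that the weighted sums ``converge precisely under the grading condition $\beta\geq 1$'' is false. With $|D^2(r^{\gamma}\alpha)|\lesssim r^{\gamma-2}\tilde\alpha(\theta)$ and $h_k\lesssim h\,x^{\tilde\gamma}$, extracting the full power of $h$ as in the proof of Theorem \ref{approxtheorem2} b) leaves, for the $L^2$ part, an integral of the type $\int_Q x^{4\tilde\gamma}r^{2\gamma-4}\,dx\,dy\sim\int_0 r^{4\tilde\gamma+2\gamma-3}\,dr$, which is finite only for $\beta>\tfrac{2}{1+\gamma}\approx 1.54$, and the corresponding $H^1$ sum is finite only for $\beta>1/\gamma\approx 3.4$. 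This is exactly the mechanism that forces the hypothesis $\beta\geq\tfrac{3}{2(\gamma+1/2)}$, cf.\ \eqref{betaestimation}, in the paper's own Theorem \ref{approxtheorem2}; since Theorem \ref{keyucorner} is asserted for all $\beta\geq 1$, your argument as written gives nothing in the range $1\leq\beta<\tfrac{2}{1+\gamma}$. The statement is still true there, but the bookkeeping must be repaired: either extract only the power $h^{\beta(1-s)}$ actually needed when it is the minimum, or observe that the divergent weighted integral is truncated at $r\gtrsim h_1\sim h^{\beta}$, which turns the corner contribution into $h^{\beta(1+\gamma-s)}\leq h^{\beta(1-s)}$ (equivalently, a dyadic-annulus argument using $r^{\gamma}\alpha\in H^{1+\gamma-\varepsilon}$). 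Incidentally, on $R_{11}$ the $H^{-1}$-duality step you import from Theorem \ref{approxtheorem2} b) is the wrong tool for $s\geq 0$; what you need there is a direct scaling bound $\lesssim h_1^{1+\gamma-s}=h^{\beta(1+\gamma-s)}$, which is harmless.

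The corner-edge terms are the more serious gap. Up to the cutoff, $\tilde\chi(\theta)\,b_i(r)(\sin\theta)^{1/2}=\bigl(b_i(r)r^{-1/2}\bigr)\,y^{1/2}$ with $b_i(r)r^{-1/2}\sim r^{\gamma-1}$: this is not a Cartesian tensor product (the first factor depends on $r$, not on $x$), and, worse, since $\gamma<\tfrac12$ the along-edge factor $r^{\gamma-1}$ is not even in $L^2(0,1)$, so the factorized one-dimensional norms you propose to multiply via Lemma \ref{lemma3.3a} and Lemma \ref{keylemmagrad2} are infinite. Integrability is rescued only by the joint behaviour, $y\lesssim r$ on $\mathrm{supp}\,\tilde\chi(\theta)$, so a genuinely two-dimensional weighted estimate is required — this is precisely the technical core of \cite{disspetersdorff, petersdorff}, and it is the very step the paper itself declines to reproduce at the end of the proof of Theorem \ref{approxtheorem2} b). Note also that Lemma \ref{keylemmagrad2} requires a positive exponent, so it cannot be applied to $r^{\gamma-1/2}$, and applying it formally ``with $a=\gamma$'' would give the exponent $\beta(\gamma+\tfrac12-s)<\beta(1-s)$, short of the asserted rate; a correct accounting must credit the additional factor $y^{1/2}$ to the same term. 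Without these two repairs the proposal does not establish the theorem in the stated generality; with them, it essentially reproduces the weighted corner/corner-edge analysis of the cited elliptic references.
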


\section{Algorithmic considerations}
\label{algosect}

On the left hand side of \eqref{weakformh}, we use  ansatz functions $\psi_{\Delta t, h}(t,x)=\sum_{m,i}\psi_i^m\gamma_{\Delta t}^m(t){\psi_h^i(x)  \in V^{0,0}_{h,\Delta t}}$ and test functions $\Psi^{n,l}(t,x)=\gamma_{\Delta t}(t){\psi_h^l(x)(x)}  \in V^{0,0}_{h,\Delta t}$ to obtain for the single layer potential:
\begin{align*}
\int_0^\infty \langle V \psi_{\Delta t,h}, \dot{\gamma}^n_{\Delta t}{\psi_h^l}\rangle dt 
&= \sum_{m,i}\psi_i^m\frac{1}{4\pi} \int_0^\infty \int_{\Gamma \times \Gamma}\frac{1}{|x-y|}\gamma_{\Delta t}^m(t-|x-y|)
{\psi_h^i}(y)\dot{\gamma}^n_{\Delta t}(t){\psi_h^l}(x) ds_x ds_y dt\\
&= \sum_{m,i}\psi_i^m\frac{1}{4\pi} \int_{\Gamma \times \Gamma}\frac{{\psi_h^i}(y){\psi_h^l}(x)}{|x-y|}\int_0^\infty\gamma_{\Delta t}^m(t-|x-y|)\dot{\gamma}^n_{\Delta t}(t)dt\ ds_x ds_y\\
&= \sum_{m,i}\psi_i^m\frac{1}{4\pi} \int_{\Gamma \times \Gamma}\frac{{\psi_h^i}(y){\psi_h^l}(x)}{|x-y|}\ (\chi_{E_{n-m-1}}(x,y)-\chi_{E_{n-m}}{(x,y)})\ ds_x ds_y\\
&= \sum_{m,i}\psi_i^m\frac{1}{4\pi}[ \int_{E_{n-m-1}}\frac{{\psi_h^i}(y){\psi_h^l}(x)}{|x-y|}\ ds_x ds_y -\int_{E_{n-m}}\frac{{\psi_h^i}(y){\psi_h^l}(x)}{|x-y|}\ ds_x ds_y]
\end{align*}
\noindent for all $n=1, ..., N_t$ and $l=1, ..., N_s$. Here the light cone $E_l$ is defined as
\begin{align*}
 E_l:=\left\{(x,y)\in\Gamma\times \Gamma:\ t_l\leq|x-y|\leq t_{l+1}\right\}\ ,
\end{align*}
and its indicator function is defined as $\chi_{E_l}{(x,y)=1}$ {if $(x,y) \in E_l$, and $\chi_{E_l}(x,y)=0$ otherwise}. The integrals are evaluated using a composite $hp$-graded quadrature \cite{gimperleinreview}.
\\

For piecewise constant test functions in time, the Galerkin discretization leads to a block--lower--triangular system of equations, which can be solved by blockwise forward substitution. For the Dirichlet problem \eqref{weakformh} we obtain an algebraic system of the form
\begin{align*}
 \textstyle{\sum\limits_{m=1}^{n}V^{n-m}\psi^m=f^{n-1}-f^{n}}\ ,
\end{align*}
{where $\psi^m$ is the vector with components $\psi_i^m$ of the the ansatz function $\psi_{\Delta t, h}(t,x)$ and $f^n = \int_\Gamma f(t_n,x) \ ds_x$.} Forward substitution gives rise to the \emph{marching-in-on-time} (MOT) scheme
\begin{align}\label{eq:MOT_LGS}
V^0\psi^n=f^{n-1}-f^{n}- \textstyle{\sum\limits_{m=1}^{n-1}}V^{n-m}\psi^m\,.
\end{align}
The resulting algorithm is given as Algorithm \ref{MOT}.

\begin{algorithm}
  \caption{\label{MOT} {Marching-on-in-time} algorithm.}
  \begin{algorithmic}
  \FOR{$n=1,2,\ldots$}
    \IF{$n-1>\left[\frac{ \operatorname{diam}\Gamma}{\Delta t}\right]$}
			\STATE $V^{n-1}=0$
		\ELSE
			\STATE Compute and store
			  $$ (V^{n-1})_{il}=\frac{1}{4\pi} \int_{E_{n-1}}\frac{{\psi_h^i}(y){\psi_h^l}(x)}{|x-y|}\ ds_x ds_y, \quad i,l=1,\ldots, N_s$$
		\ENDIF
    \STATE Compute right hand side $f^{n-1}-f^n-\sum_{m=1}^{n-1}V^{n-m}\psi^m$
		\STATE Solve system of linear equations \eqref{eq:MOT_LGS}
		\STATE Store solution $\psi^n$
  \ENDFOR
  \end{algorithmic}
\end{algorithm}
We remark that for a bounded surface $\Gamma$ the matrices  $V^{n-m}$ vanish whenever the time difference $l=n-m$ satisfies
$l>\left[\frac{ \operatorname{diam}\Gamma}{\Delta t}\right]$, i.e.~the light cone has passed the entire surface $\Gamma$.\\

The implementation of $W$ is based on the weak form \eqref{weakformWh} and the formula 
\begin{align*}
 \int_{\mathbb{R}^+\times \Gamma} (W \phi)\ \partial_t \Phi \ dt \, ds_x\  &= \frac{1}{2 \pi} \int_{0}^{\infty} \int_{\Gamma \times \Gamma}
 \Big\lbrace \frac{-n_x \cdot n_y}{|x-y|} \dot \phi(\tau,y) \ddot \Phi(t,x) 
 \\ &\qquad\ \ + \frac{( \nabla_\Gamma \phi )(\tau,y) \cdot ( \nabla_\Gamma \dot\Phi)(t,x)}{|x-y|} \Big\rbrace ds_y\ ds_x \ dt\ ,
\end{align*}
see \cite{gimperleintyre} for details. We use ansatz functions in $\widetilde{V}^{1,1}_{h,\Delta t}$. To obtain an MOT scheme the test functions $\dot\Phi_{h, \Delta t}(t,x) \in \widetilde{V}^{0,1}_{h,\Delta t}$ are piecewise constant in time and piecewise linear in space.

Similar formulas hold for the operators $K, K'$, and variants of the discretizations for $V$, $W$. The resulting MOT schemes are described in \cite{banz2}. They can be combined into a stable scheme for the Dirichlet-to-Neumann operator $\mathcal{S}$ from \eqref{1.4}, with $\sigma=0$, using the representation $\mathcal{S}= W-(K'-\frac{1}{2}I)V^{-1}(K-\frac{1}{2}I)$ in terms of layer potentials. As in \cite{contact}, the Dirichlet-to-Neumann equation \eqref{DNeq}, $\mathcal{S} u= h$, 
 is equivalently reformulated as  follows:
\\ For given $h\in H^{\frac{3}{2}}_\sigma(\mathbb{R}^+,{H}^{-\frac{1}{2}}(\Gamma)) $, find $\phi \in H^{\frac{1}{2}}_\sigma(\mathbb{R}^+,\tilde{H}^{\frac{1}{2}}(\Gamma)) ,\psi\in {H}^{\frac{1}{2}}_\sigma(\mathbb{R}^+,\tilde{H}^{-\frac{1}{2}}(\Gamma)) $ such that
\begin{align}
\timeint \langle W\phi - (K'-\textstyle{\frac{1}{2}})\psi ,{\Phi}\rangle_\Gamma \dt =\timeint \langle h,{\Phi}\rangle_\Gamma\dt\ ,
\\ \timeint [\langle V\psi,{\partial_t \Psi}\rangle_\Gamma  - \langle(K-\textstyle{\frac{1}{2}})\phi, {\partial_t \Psi} \rangle_\Gamma] \dt =0,
\end{align}
holds for all $\Phi \in {H}^{\frac{1}{2}}_\sigma(\mathbb{R}^+,\tilde{H}^{\frac{1}{2}}(\Gamma)) $,$\Psi \in {H}^{\frac{1}{2}}_\sigma(\mathbb{R}^+,\tilde{H}^{-\frac{1}{2}}(\Gamma)) $.\\ For the discretization, we look for $\phi_{\Delta t,h} \in \tilde{V}^{1,1}_{\Delta t,h}$, $\psi_{\Delta t,h} \in V^{1,1}_{\Delta t,h}$ linear in space and time. To obtain a marching-on-in-time scheme we test the first equation against constant test functions in time and the second equation against the time derivative of constant test functions.

\section{Numerical experiments}
\label{experiments}

\subsection{Single layer potential}

\begin{example}\label{example0}
Using the discretization from Section \ref{discretization}, we compute the solution to the integral equation $V\psi=f$ on {$\mathbb{R}_t^+ \times \Gamma$ with} the circular screen $\Gamma = \{(x,y,0) : 0 \leq \sqrt{x^2+y^2}\leq 1\}$ depicted in Figure \ref{gradmesh}. We use the weak form \eqref{weakformh} with constant test and ansatz functions in  space and time. The right hand side is given by $f(t,x)=\cos(|k|t-k\cdot x) \exp({-}1/(10 t^{2}))$, where $k=(0.2,0.2,0.2)$. The time discretization errors are negligibly small in this numerical experiment, when the time step is chosen to be $\Delta t=0.005$. We compute the solution up to $T=1$.  The finest graded mesh consists of 2662 triangles, and we use the solution on this mesh as reference solution using the same $\Delta t=0.005$. 
\end{example}
Figure \ref{edgeGradedann} shows the density along a cross-section on a $\beta$-graded mesh with $\beta$=2 and 2662 triangles at time $T=0.5$. The figure exhibits the edge singularities predicted by the decomposition in equation \eqref{decompositionEdge} and illustrates the qualitative behavior of the solution. 
\begin{figure}[H]
\centering     
\makebox[\linewidth]{
\includegraphics[width=12cm]{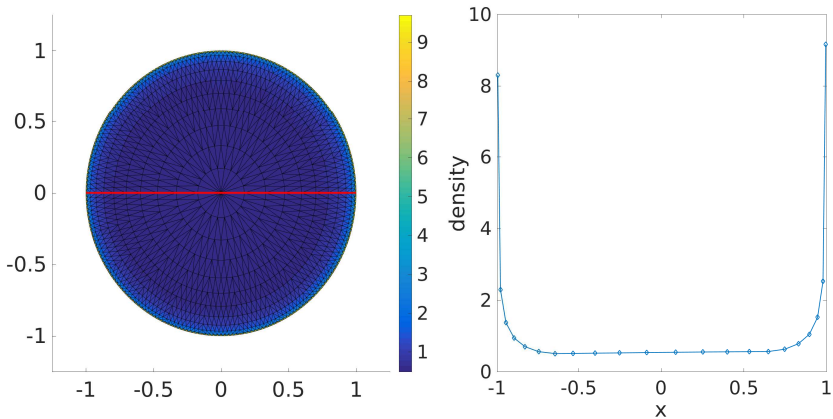}}
\caption{Solution of the single layer equation at $T=0.5$ along $y=0$ on the circular screen, Example \ref{example0}}
\label{edgeGradedann}
\end{figure}

Figure  \ref{edge1ann} examines the detailed singular behavior near the outer edge at $(1,0)$. It plots the numerical density at times $T=0.5,\ 0.75,\ 1.0$ against the distance to the edge along $x$-axis. In the log-log plot the slope of the curve near $0$ corresponds to the edge exponent in decomposition \eqref{decompositionEdge}. \\ The numerical solution exhibits edge singularities in close agreement with \eqref{decompositionEdge}. Numerically, the singular exponents are within $8 \%$ of the theoretical value of $-\frac{1}{2}$ for the edge at these times. Note that the convergence of our boundary element method in the energy norm does not a priori  imply convergence for the numerically computed singular exponents.    

For Example \ref{example0}, we finally consider the error compared to the benchmark solution on the 2-graded mesh. Because of the low spatial regularity of the solution, the numerical solutions cannot be expected to converge in $L^2([0,T] \times \Gamma)$. As a weaker measure, we consider 
the energy norm defined by the single layer operator, which is computed from the stiffness matrix $V$ and the solution vector $u$ as $E(\psi)=\frac{1}{2}\psi^\top V \psi -\psi^\top f$. It is comparable or weaker than the norm of $H^{0}_\sigma(\mathbb{R}^+,{H}^{-\frac{1}{2}}(\Gamma))$.  For the error as a function of the degrees of freedom, Figure \ref{l1errorann} shows convergence in the energy norm with a rate $-0.52$ on the 2-graded  mesh, respectively $-0.26$ on the uniform mesh. The error therefore behaves in agreement with the approximation properties proportional to $\sim h$ (equivalently, $\sim DOF^{-\frac{1}{2}}$) on the 2-graded mesh, while the convergence is $\sim h^{1/2}$ ($\sim DOF^{-1/4}$) on a uniform mesh.

\begin{figure}[H] 
\makebox[\linewidth]{
\includegraphics[width=12cm]{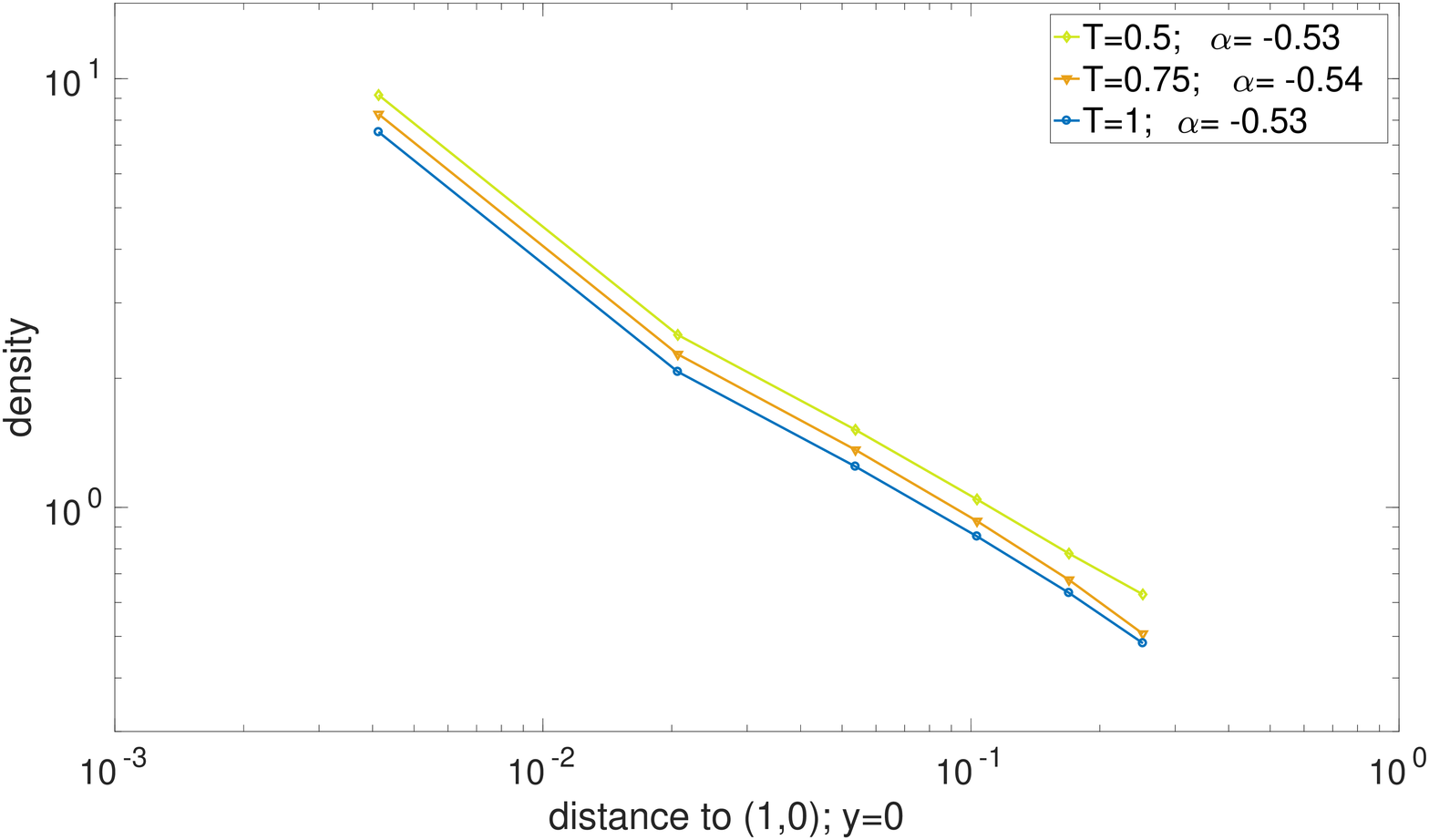} }
\caption{Asymptotic behavior of the solution to the single layer equation near edge along $y=0$, Example \ref{example0}}
\label{edge1ann}
\end{figure}
\begin{figure}[H]{
 \makebox[\linewidth]{
 \includegraphics[width=12cm]{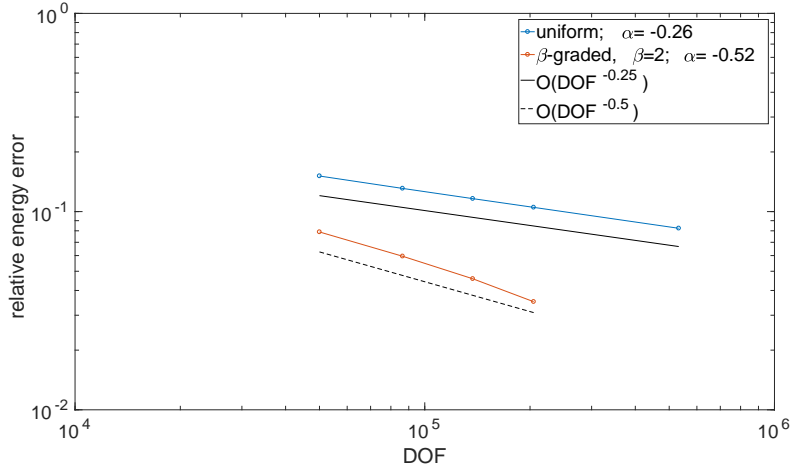}
 }
 \caption{Energy error for single layer equation on circular screen, Example \ref{example0}} \label{l1errorann}}
 \end{figure}

\begin{example}\label{example1}
Using the discretization from Section \ref{discretization}, we compute the solution to the integral equation $V\psi=f$ on {$\mathbb{R}_t^+ \times \Gamma$ with} the square screen $\Gamma = [-1,1]^2\times\{0\}$ using the weak form \eqref{weakformh}, with constant test and ansatz functions in  space and time. The right hand side is given by $f(t,x)=\cos(|k|t-k\cdot x) \exp(-1/(10 t^{2}))$, where $k=(0.2,0.2,0.2)$. The time discretization errors are negligibly small in this numerical experiment, when the time step is chosen to be $\Delta t=0.005$. We compute the solution up to $T=1$.  The finest graded mesh consists of $2312$ triangles, and we use the solution on this mesh as reference solution using the same $\Delta t=0.005$. 
\end{example}
Figures \ref{cornerGraded} and \ref{edgeGraded} show the density along a cross-section and along a longitudinal section on a $\beta$-graded mesh with $\beta$=2 and $2312$ triangles at time $T=0.5$. Both figures exhibit the corner and edge singularities predicted by the decomposition \eqref{decomposition} and illustrate the qualitative behavior of the solution. Figure \ref{allCorner} compares the solution along the cross-section on a $2$-graded mesh against the solution on two uniform meshes. We see that the $2$-graded mesh yields a {higher} resolution of the corner singularities compared to the uniform meshes. 
\begin{figure}[H]
\centering     
\makebox[\linewidth]{
\includegraphics[width=12cm]{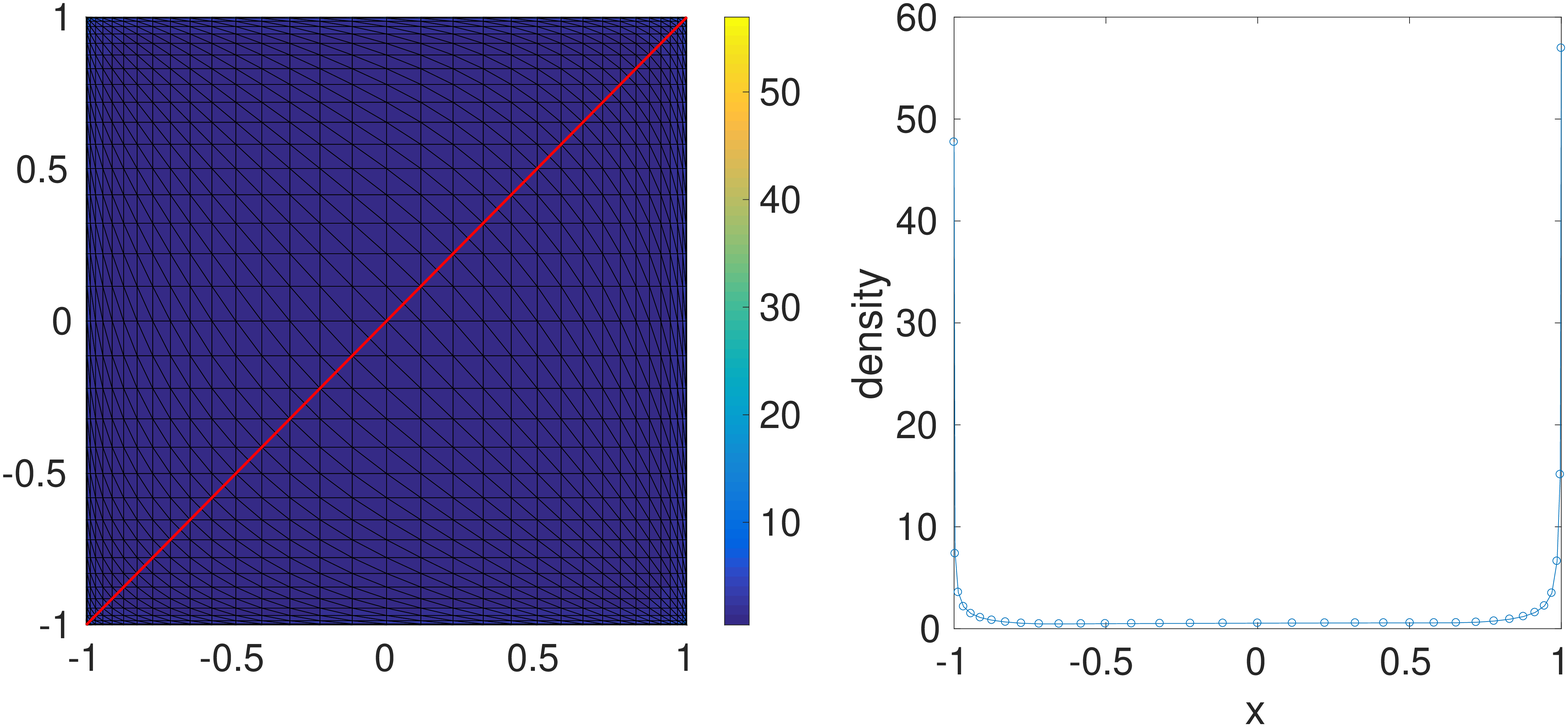}}
\caption{Solution of the single layer equation at $T=0.5$ along $y=x$ on the square screen, Example \ref{example1}}
\label{cornerGraded}
\end{figure}
\begin{figure}[H]
\centering     
\makebox[\linewidth]{
\includegraphics[width=12cm]{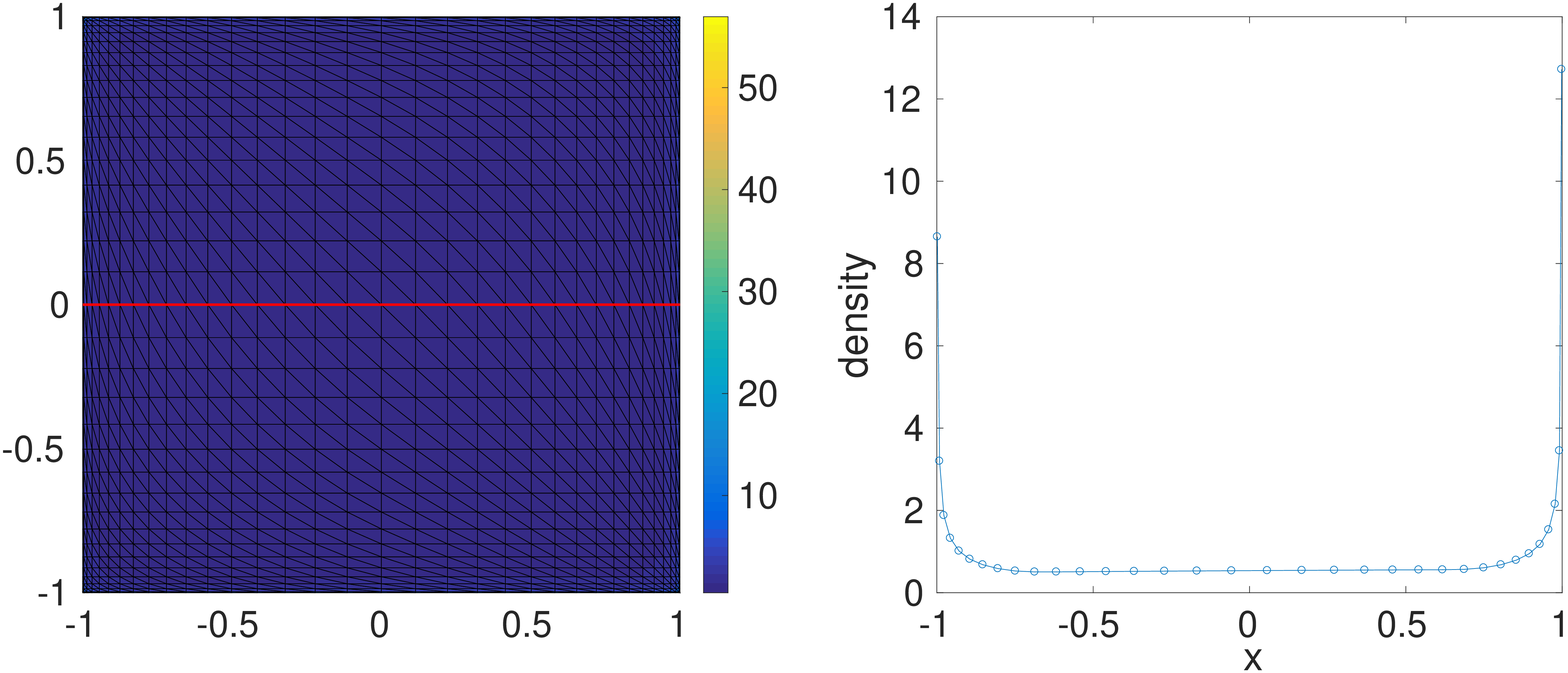}}
\caption{Solution of the single layer equation at $T=0.5$ along $y=0$ on the square screen, Example \ref{example1}}
\label{edgeGraded}
\end{figure}

\begin{figure}[H]
\centering     
\makebox[\linewidth]{
\includegraphics[width=12cm]{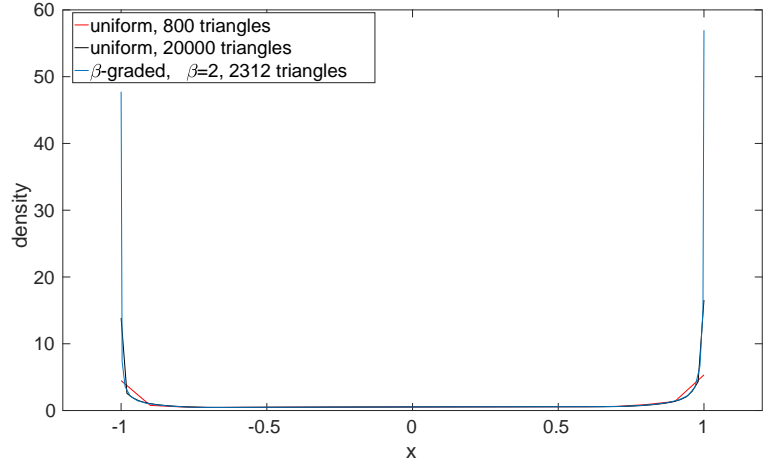}}
\caption{Numerical computation of the corner singularity along diagonal from $(-1,-1)$ to $(1,1)$ at time $T=0.5$, Example \ref{example1}}
\label{allCorner}
\end{figure}
Figure  \ref{corner1} examines the detailed singular behavior near the corner $(1,1)$. It plots the numerical density at times $T=0.5,\ 0.75,\ 1.0$ against the distance to the corner along the diagonal of the screen. In the log-log plot the slope of the curve near $0$ corresponds to the corner exponent in decomposition \eqref{decomposition}. Similarly, Figure \ref{edge1} shows the density as a function of $x$ for $y=0$, perpendicular to the edge, at the same times.\\ After a short computational time, the numerical solution exhibits edge and corner singularities corresponding to \eqref{decomposition}. Numerically, the singular exponents at large enough times $T=0.5,\ 0.75,\ 1$ are within $2 \%$ of the theoretical value of $-\frac{1}{2}$ for the edge, while they are around $-0.78$ for the corner, approximately $10 \%$ higher than the theoretical exponent $\gamma-1$. Note that the convergence of our boundary element method in the energy norm does not a priori  imply convergence for the numerically computed singular exponents.    
\begin{figure}[H] 
\makebox[\linewidth]{
\includegraphics[width=12cm]{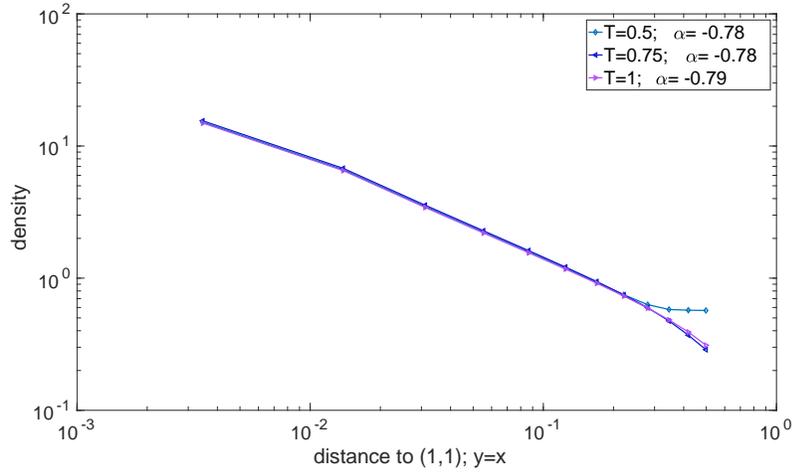} }
\caption{Asymptotic behavior of the solution to the single layer equation near corner along $y=x$, Example \ref{example1}}
\label{corner1}
\end{figure}
\begin{figure}[H] 
\makebox[\linewidth]{
\includegraphics[width=12cm]{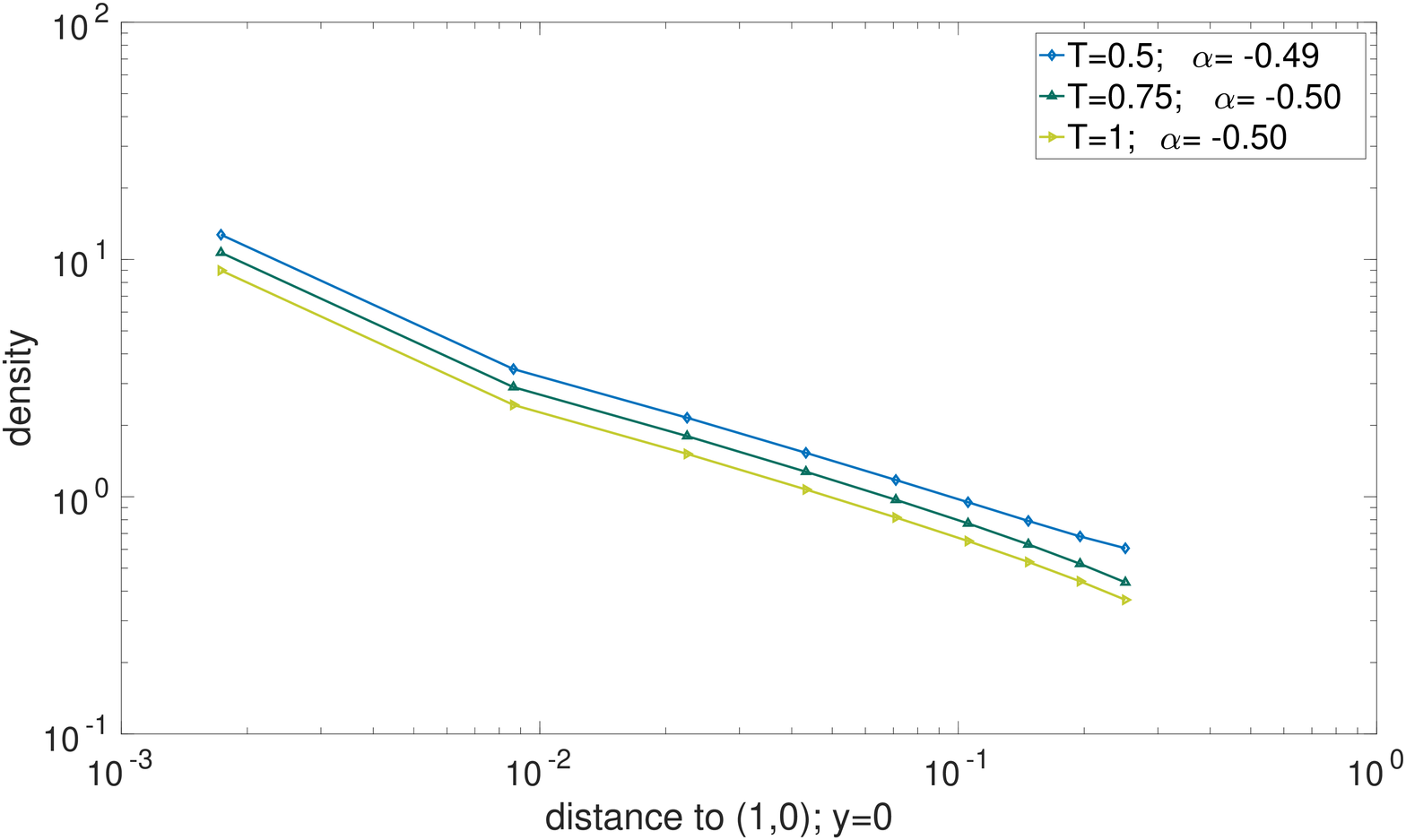} }
\caption{Asymptotic behavior of the solution to the single layer equation near edge along $y=0$, Example \ref{example1}}
\label{edge1}
\end{figure}
\begin{figure}[H]{
 \makebox[\linewidth]{
 \includegraphics[width=12cm]{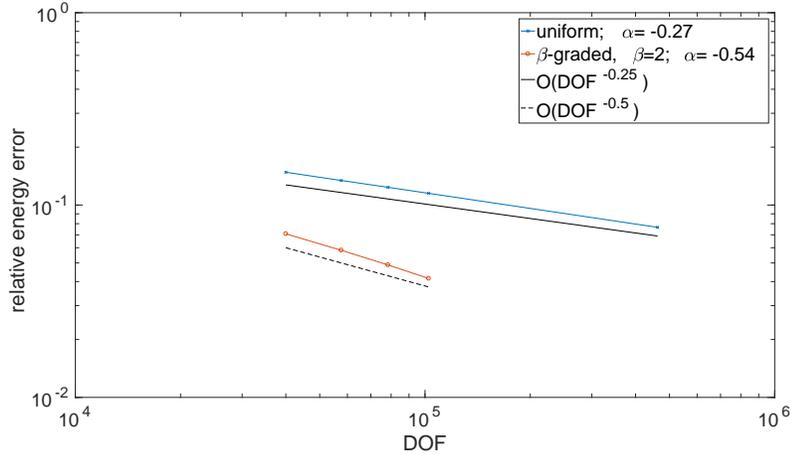}
 }
\caption{Energy error norm for single layer equation on square screen, Example \ref{example1}} }\label{l1errorfig}
 \end{figure}

For Example \ref{example1}, we finally consider the error compared to the benchmark solution on the 2-graded mesh. Like in Example \ref{example0}, we consider the energy norm defined by the single layer operator. 
 Figure 11 shows convergence of the norm with rates $-0.54$ on the 2-graded mesh, respectively $-0.27$ on the uniform mesh in terms of degrees of freedom. These closely mirror the approximation results, which predict an approximation error proportional to $\sim h$ (equivalently, $\sim DOF^{-\frac{1}{2}}$) on the 2-graded mesh, while the approximation error is $\sim h^{\frac{1}{2}}$ ($\sim DOF^{-\frac{1}{4}}$) on a uniform mesh. In particular, compared to Example \ref{example0}, the corner singularities of the square screen do not affect the convergence rate.

To further probe the effect of the corners we also consider the $L_2$ norm in time of the sound pressure evaluated in a point. For applications the approximation of the sound pressure away from the screen is often the most relevant measure. We evaluate the sound pressure by substituting the density $\psi_{\Delta t,h}$ into the single layer potential, $p_{\Delta t,h}=S \psi_{\Delta t,h}$, and use a tensor product Gaussian quadrature with $400$ nodes per triangle to evaluate the integral.  Figure \ref{soundErr} shows the $L_2$ error in time of the sound pressure evaluated in three points outside of the screen, $(1,1,0.004)$, $(0.75, 0.75, 1)$ and $(1,1.25, 0.25)$. In each of the points, the convergence is proportional to $\sim h^2$, resp.~$\sim h$, as for the energy norm. However, while the convergence rate is in agreement with the energy norm, the error in the sound pressure strongly depends on the location of the point. In $(1,1,0.004)$, at distance $0.004$ from the corner of the screen, the error is an order of magnitude higher than in the points $(0.75, 0.75, 1)$ and $(1,1.25, 0.25)$, which are at a distance of order $1$.

\begin{figure}[H]{
 \makebox[\linewidth]{
 \includegraphics[width=14.3cm]{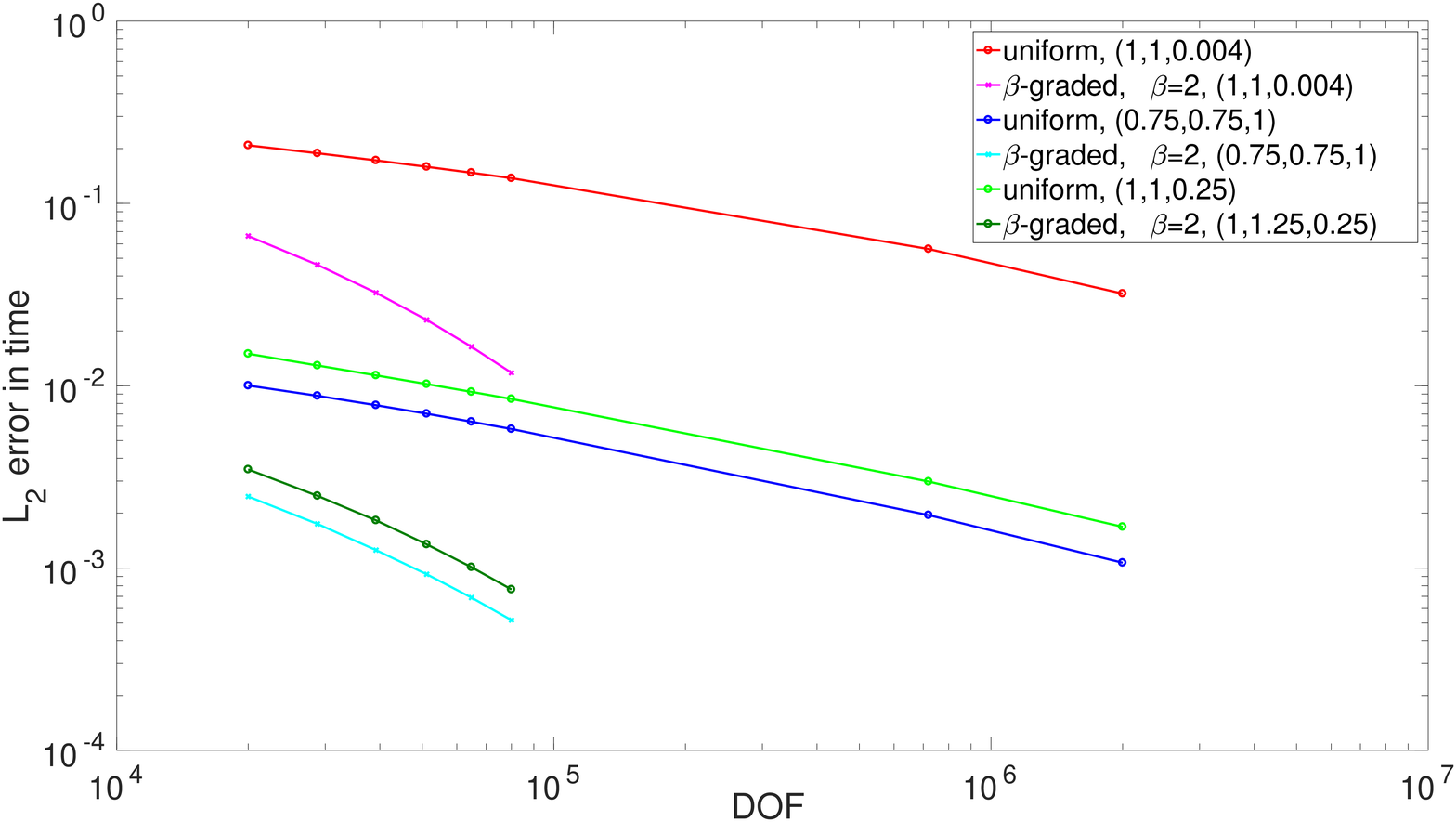}
 }
 \caption{$L_2([0,T])$ error for the sound pressure in three points outside square screen, computed from single layer equation, Example \ref{example1}} \label{soundErr}}
 \end{figure}

\subsection{Hypersingular operator}

\begin{example}\label{example2a0}
Using the discretization from Section \ref{discretization}, we compute the solution to the integral equation $W\phi=g$ {on $\mathbb{R}_t^+ \times \Gamma$ with} the circular screen $\Gamma = \{(x,y,0) : 0 \leq \sqrt{x^2+y^2}\leq 1\}$ depicted in Figure \ref{gradmesh}. We use the weak form \eqref{weakformWh} with linear ansatz and test functions in space, linear ansatz and constant test functions in time. Here,
\begin{align*}
 g{(t,x)}&=\textstyle{(-\frac{3}{4}+\cos(\frac{\pi}{2}(4-t))+\frac{\pi}{2}\sin(\frac{\pi}{2}(4-t)) -\frac{1}{4}(\cos({\pi}(4-t))+ \pi\sin({\pi}(4-t))))}\\ & \qquad \times [H(4-t)-H(-t)],
\end{align*}
{where $H$ is the Heaviside function.}
The time discretization errors are negligibly small in this numerical experiment, when the time step is chosen to be $\Delta t=0.01$. We compute the solution up to $T=4$.  The finest graded mesh consists of 2662 triangles, and we use the solution on this mesh as reference solution using the same $\Delta t=0.01$. 
\end{example}

\begin{figure}[H] 
\makebox[\linewidth]{
\includegraphics[width=12cm]{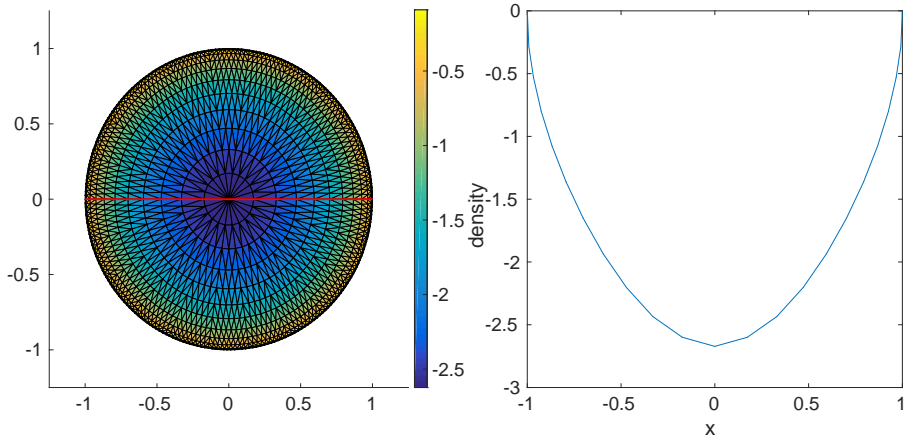} }
\caption{Solution of the hypersingular equation at $T=2$ along $y=0$ on the circular screen, Example \ref{example2a0}}\label{WedgeGradedanndens}
\end{figure}

Figure \ref{WedgeGradedanndens} shows the density along a cross-section on a $\beta$-graded mesh with $\beta$=2 and 2662 triangles at time $T=2$. The figure exhibits the edge singularities predicted by the decomposition \eqref{decompostionEdget} and illustrates the qualitative behavior of the solution. 

\begin{figure}[H] 
\makebox[\linewidth]{
\includegraphics[width=12cm]{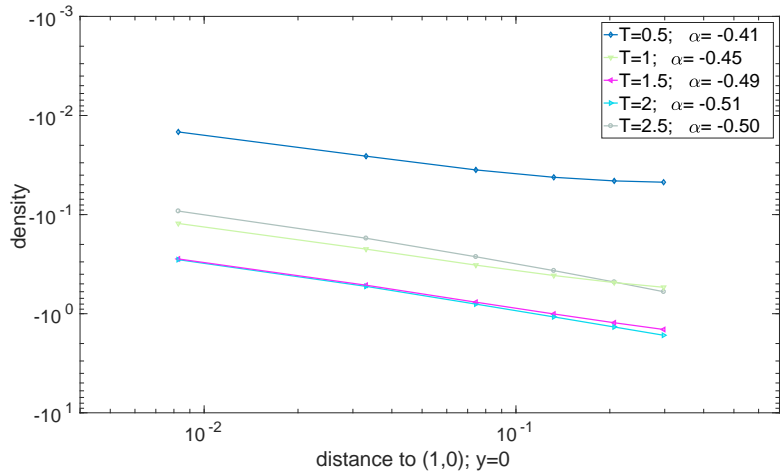} }
\caption{Asymptotic behavior of the solution to the hypersingular equation near edge along $y=0$, Example \ref{example2a0}}
\label{WedgeGradedann}
\end{figure}

Figure  \ref{WedgeGradedann} examines the detailed singular behavior at the circular edge along the $x$-axis near the point $(1,0)$. It plots the numerical density at times up to $T=2.5$ against the distance to the edge. For the singular exponents, we numerically obtain values within $5 \%$ of the theoretical value of $\frac{1}{2}$, except at the earliest time $T=0.5$, when compute an exponent $0.41$. 

\begin{figure}[H]
 \makebox[\linewidth]{
 \includegraphics[width=12cm]{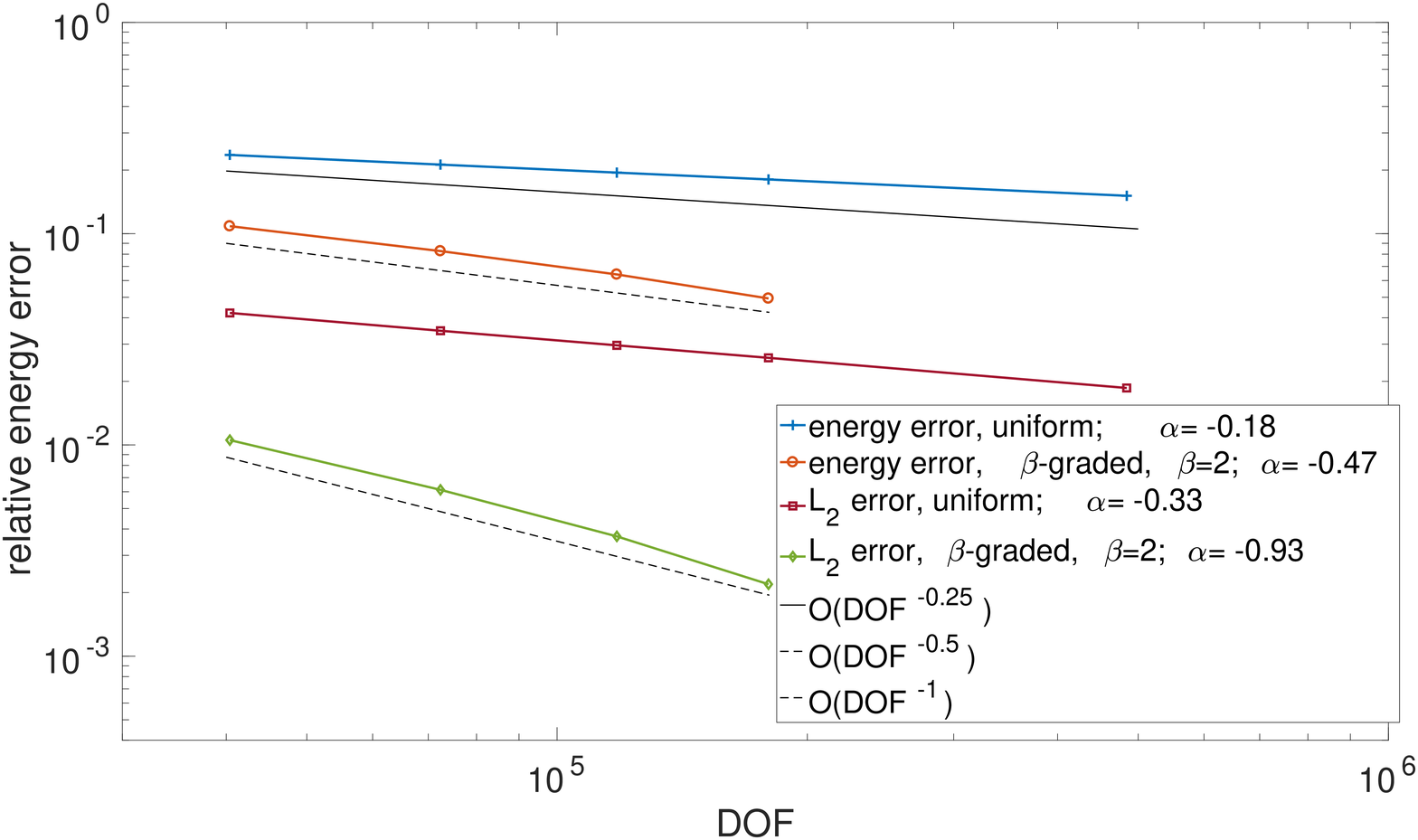}
 }
 \caption{$L_2([0,T],L_2(\Gamma))$ and energy error for hypersingular equation on circular screen, Example \ref{example2a0}} 
 \label{l2Wann}
\end{figure}

Finally, Figure \ref{l2Wann} shows the error in both the energy and $L_2([0,T],L_2(\Gamma))$ norms with respect to the benchmark solution. 
The convergence rate in terms of the degrees of freedom on the $2$-graded mesh is $-0.47$ in energy and $-0.93$ in $L_2$. It is in close agreement with a convergence proportional to $\sim h$ (equivalently, $\sim DOF^{-1/2}$) predicted by the approximation properties in the energy norm, and $\sim h^2$ (equivalently, $\sim DOF^{-1}$) in $L_2$.  On the uniform mesh the rate is $-0.18$ in energy and $-0.33$ in $L_2$.  

\begin{example}\label{example2a}
Using the discretization from Section \ref{discretization}, with test and ansatz functions as in Example \ref{example2a0}, we compute the solution to the integral equation $W\phi=g$ on {$\mathbb{R}_t^+ \times \Gamma$ with} the square screen $\Gamma = [-1,1]^2\times\{0\}$. We prescribe the right hand side
\begin{align*}
 g{(t,x)}&=(-\frac{3}{4}+\cos(\frac{\pi}{2}(4-t))+\frac{\pi}{2}\sin(\frac{\pi}{2}(4-t)) -\frac{1}{4}(\cos({\pi}(4-t))+ \pi\sin({\pi}(4-t))))\\ & \qquad \times [H(4-t)-H(-t)],
\end{align*}
{where $H$ is the Heaviside function,} and set $\Delta t= 0.01$, $T= 4$. The finest graded mesh consists of $2312$ triangles, and we use the solution on this mesh as reference solution using the same $\Delta t= 0.01$.
\end{example}
The density along  the diagonal $x=y$, respectively along $y=0$, exhibit the corner and edge singularities predicted by the decomposition \eqref{decompositiont}. The qualitative behavior of the solution at $T=2$ along the diagonal $y=x$ of the square screen is shown in Figure \ref{Wcornerplot}, illustrating the singularity in the corners. Figure \ref{Wedgeplot} shows the behaviour along $y=0$, with the edge singularity at the boundary of the screen. As the solution to the hypersingular equation lies in ${H}^{\frac{1}{2}}_\sigma(\R^+, \tilde{H}^{\frac{1}{2}}(\Gamma))$,  its conforming numerical approximation tends to zero at both edges and corners.

\begin{figure}[H]{
 \makebox[\linewidth]{
 \includegraphics[width=12cm]{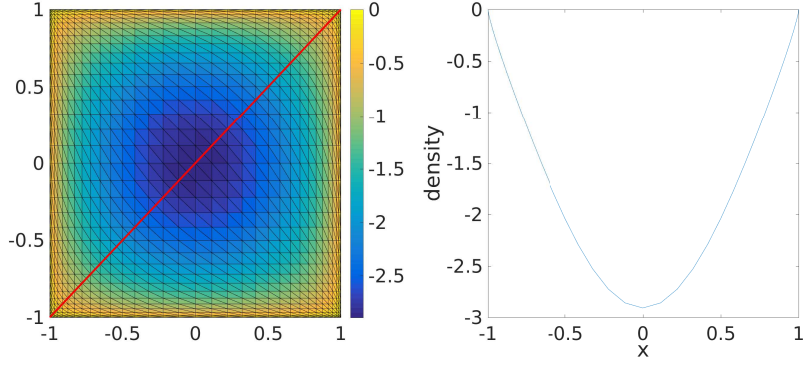}
 }
\caption{Solution of the hypersingular equation at $T = 2$ along $y = x$ on the square screen, Example \ref{example2a}}
 \label{Wcornerplot} }
\end{figure}

\begin{figure}[H]{
\centering
 \makebox[\linewidth]{
 \includegraphics[width=12cm]{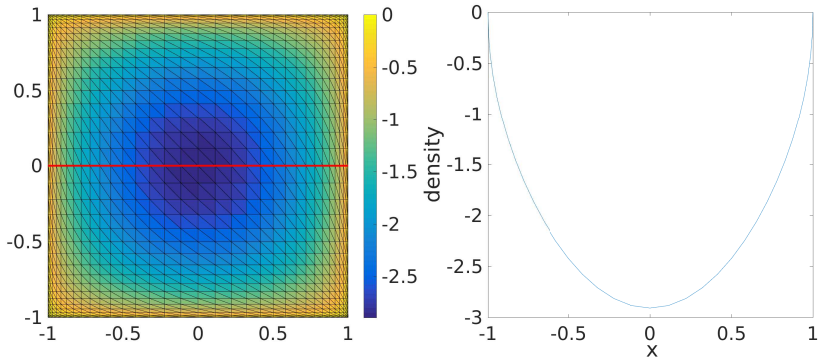}
 }
\caption{Solution of the hypersingular equation at $T = 2$ along $y = 0$ on the square screen, Example \ref{example2a} }
\label{Wedgeplot} }
\end{figure}

%
%

Figure  \ref{Wcornerexp} examines the detailed singular behavior near the corner $(1,1)$. It plots the numerical density at times  up to $T=2.5$ against the distance to the corner along the diagonal of the screen. The numerically computed singular exponents in the corner of around $0.67$ do not show good agreement with the exact corner exponent $\gamma$.  The density as a function of $x$ for $y=0$, perpendicular to the edge, is shown in Figure \ref{Wedgeexp} at the same times. Unlike for the corner exponent, the numerically computed singular exponent at the edge, around $0.48$, is witin $8\%$ of the exact value $\frac{1}{2}$ for early times, and within $4\%$ for $T\geq 1.5$.

\begin{figure}[H] 
\makebox[\linewidth]{
\includegraphics[width=12cm]{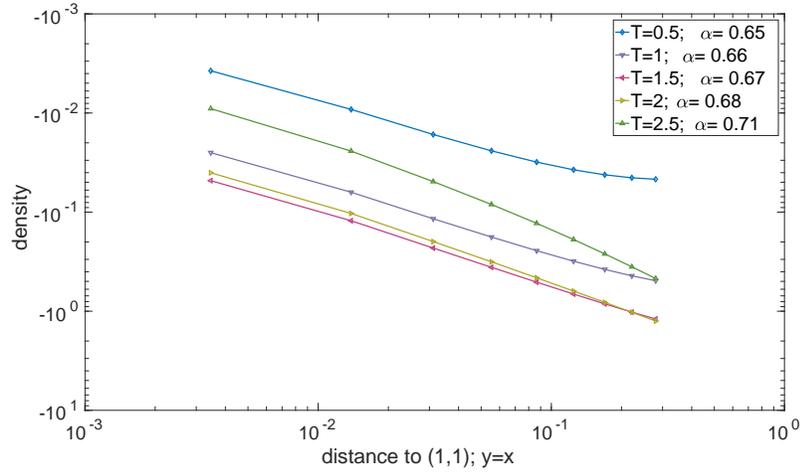} }
\caption{Asymptotic behavior of the solution to the hypersingular equation near corner along $y=x$, Example \ref{example2a0}}
\label{Wcornerexp}\end{figure}
\begin{figure}[H] 
\makebox[\linewidth]{
\includegraphics[width=12cm]{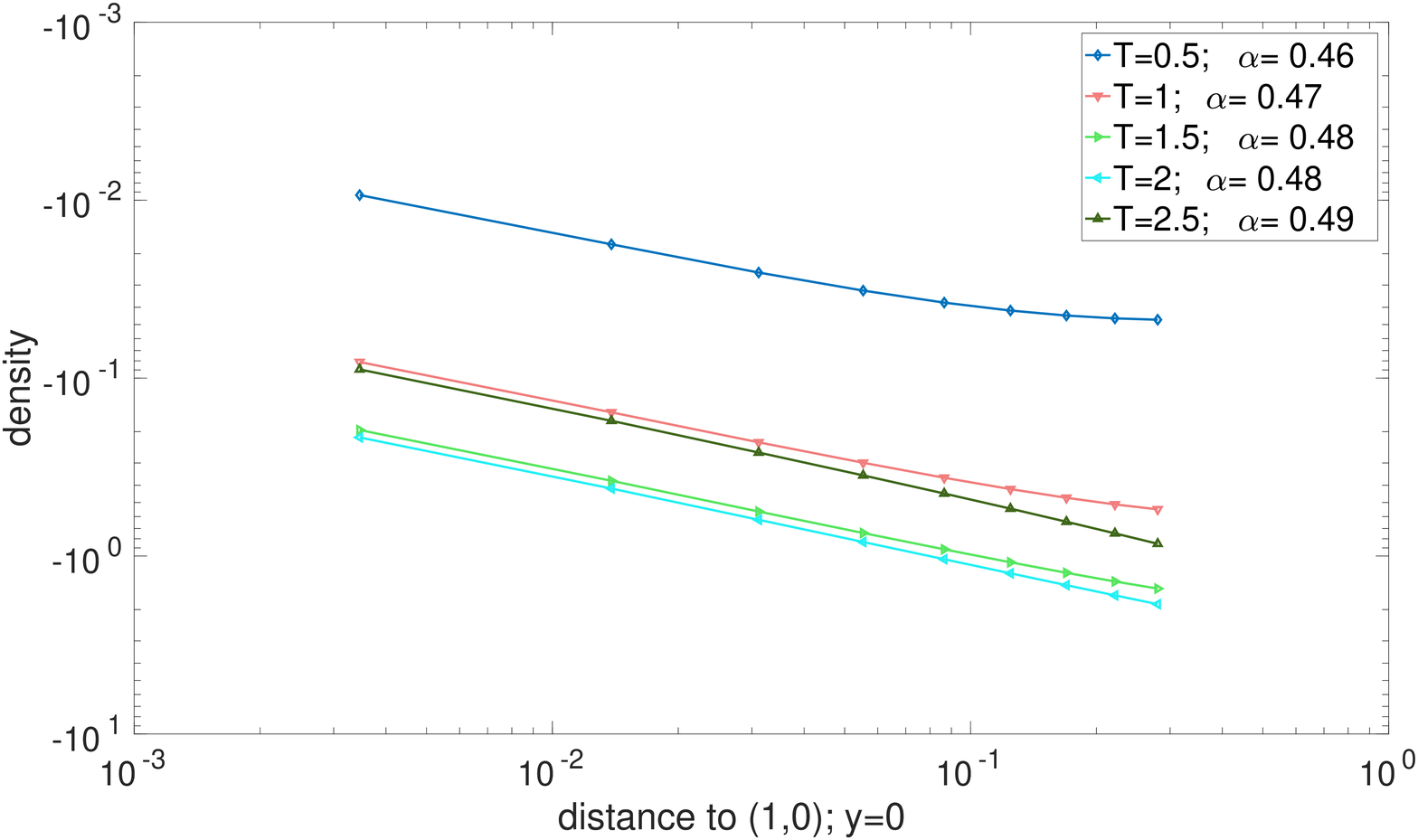} }
\caption{Asymptotic behavior of the solution to the hypersingular equation near edge along $y=0$, Example \ref{example2a0}}
\label{Wedgeexp}
\end{figure}

\begin{figure}[H]
 \makebox[\linewidth]{
 \includegraphics[width=12cm]{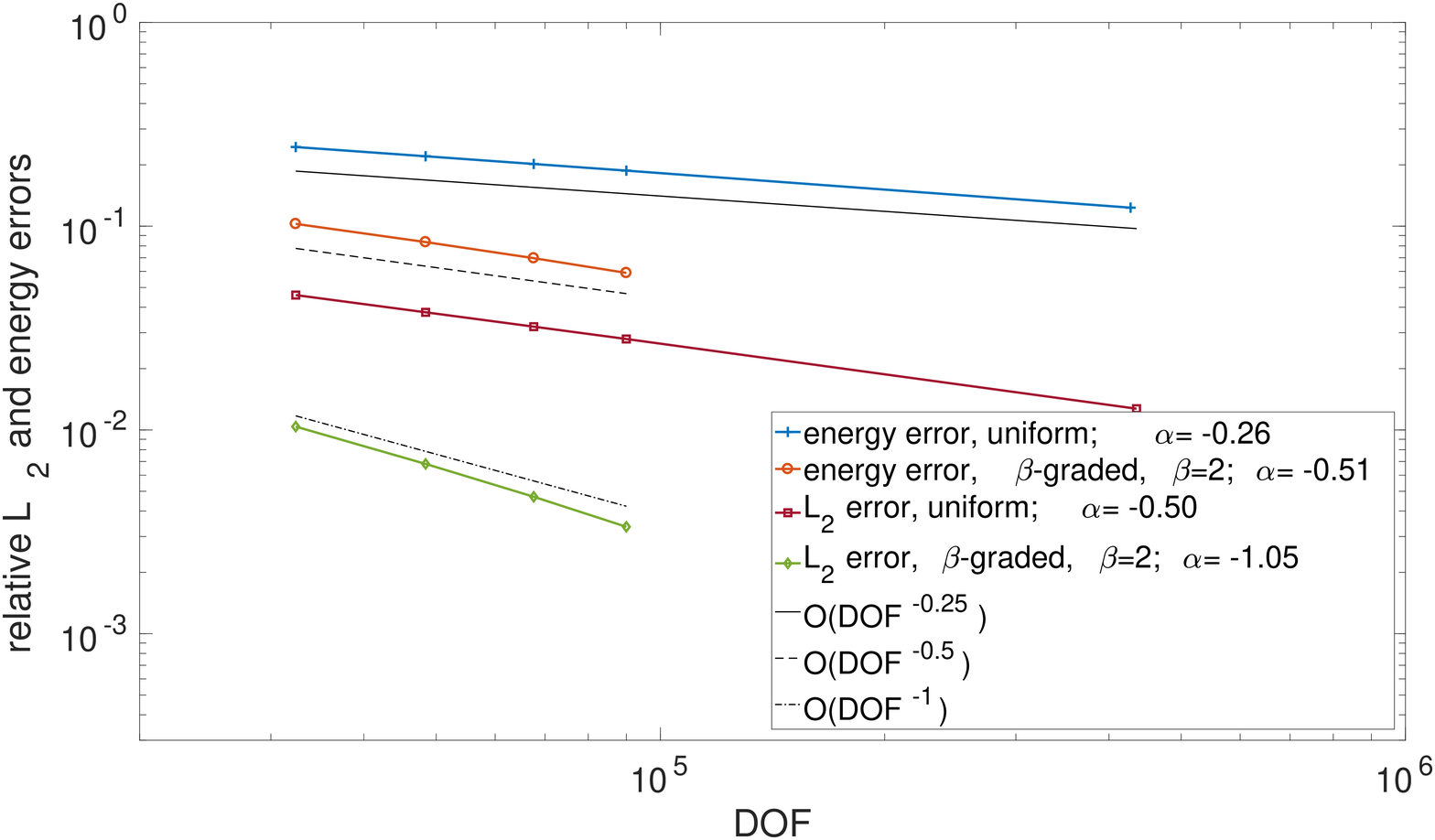}
 }
 \caption{$L_2([0,T],L_2(\Gamma))$ and energy error for hypersingular equation on square screen, Example \ref{example2a}}
 \label{l2W}
\end{figure}
Finally, Figure \ref{l2W} shows the error in both the energy and $L_2([0,T],L_2(\Gamma))$ norms with respect to the benchmark solution. 
The convergence rate in terms of the degrees of freedom on the $2$-graded mesh is $-0.51$ in energy and $-1.05$ in $L_2$. On the uniform mesh the rate is $-0.26$ in energy and $-0.50$ in $L_2$. The rates on the $2$-graded meshes are in close agreement with a convergence proportional to $\sim h$ (equivalently, $\sim DOF^{-1/2}$) predicted by the approximation properties in the energy norm, and $\sim h^{1/2}$ ($\sim DOF^{-1/4}$) on uniform meshes. Also in $L_2$ norm, the convergence corresponds to the expected rates: Approximately $\sim h^2$ (equivalently, $\sim DOF^{-1}$) on $2$-graded meshes,  $\sim h$ (equivalently, $\sim DOF^{-1/2}$) on uniform meshes. In all cases the convergence is twice as fast on the $2$-graded compared to the uniform meshes.

\subsection{Dirichlet-to-Neumann operator}

In addition to the single layer and hypersingular operators in the previous subsections, we also consider the Dirichlet-to-Neumann operator on the screen. Compared to the hypersingular operator, the Dirichlet-to-Neumann operator is {not} available in closed form and requires approximation. It is of interest to see the  influence of the approximation of the operator on the numerical solution.

\begin{example}\label{example2}
Using the discretization from Section \ref{discretization}, we compute the solution to the integral equation $\mathcal{S}u=h$ {on $\mathbb{R}_t^+ \times \Gamma$ with} $\Gamma = [-1,1]^2\times\{0\}$. We prescribe the right hand side
\begin{align*}
  h{(t,x)}&=\textstyle{(-\frac{3}{4}+\cos(\frac{\pi}{2}(4-t))+\frac{\pi}{2}\sin(\frac{\pi}{2}(4-t)) -\frac{1}{4}(\cos({\pi}(4-t))+ \pi\sin({\pi}(4-t))))} \\ &\qquad \times[H(4-t)-H(-t)],
\end{align*}
{where $H$ is the Heaviside function,} and  set $\Delta t=0.01$, $T=0.65$. The finest graded mesh consists of $2312$ triangles, and we use the solution on this mesh as reference solution using the same $\Delta t=0.01$.
\end{example}
Figures \ref{DNcorner} and \ref{DNedge} show the density along a cross-section and along a longitudinal section on a $\beta$-graded mesh with $\beta$=2 and $2312$ triangles at time $T=0.5$. Both figures exhibit the corner and edge singularities predicted by the decomposition \ref{decompositiont} and illustrate the qualitative behavior of the solution. As the solution to the Dirichlet-to-Neumann equation lies in ${H}^{\frac{1}{2}}_\sigma(\R^+, \tilde{H}^{\frac{1}{2}}(\Gamma))$,  its conforming numerical approximation is zero at the boundary of the screen.

Figure  \ref{DNcornerexp} examines the detailed singular behavior near the corner $(1,1)$. It plots the numerical density at times $T=0.25,\ 0.5,\ 0.6,\ 0.65$ against the distance to the corner along the diagonal of the screen. In the log-log plot the slope of the curve near $0$ corresponds to the corner exponent in the singular expansion. Similarly, Figure \ref{DNedgeexp} shows the density as a function of $y$ for $x=-0.8754$, perpendicular to the edge, at the same times. The numerically computed singular exponents of the edge, around $0.4$, are in qualitative agreement with the exact value $\frac{1}{2}$. For the corner, the computed value above $0.6$ differs significantly from the exact value $\gamma$. A similar difference was observed in the previous section for the hypersingular operator, so that the approximation involved in computing the Dirichlet-to-Neumann operator is not the source of this discrepancy. 
  
\begin{figure}[H]{
 \makebox[\linewidth]{
 \includegraphics[width=12cm]{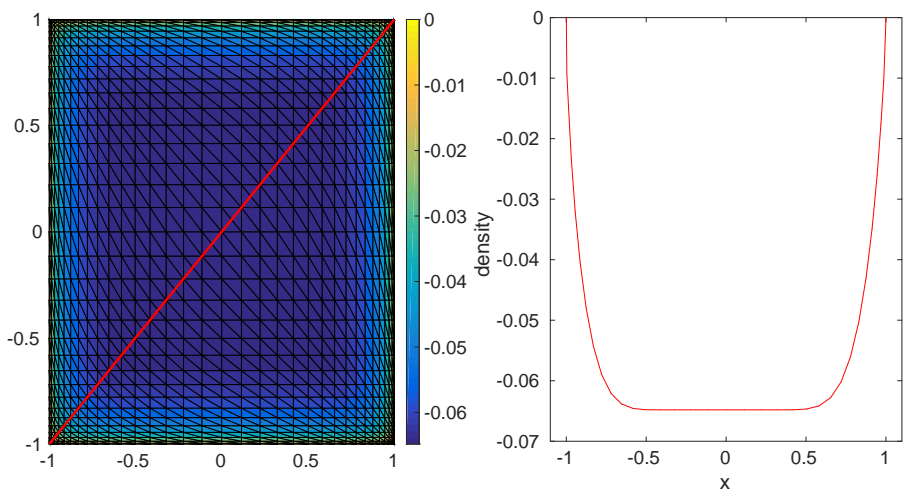}
 }
\caption{Solution of the Dirichlet-to-Neumann equation at $T=0.65$ along $y=x$ on the square screen, Example \ref{example2}}
 \label{DNcorner} }
\end{figure}

\begin{figure}[H]{
\centering
 \makebox[\linewidth]{
 \includegraphics[width=12cm]{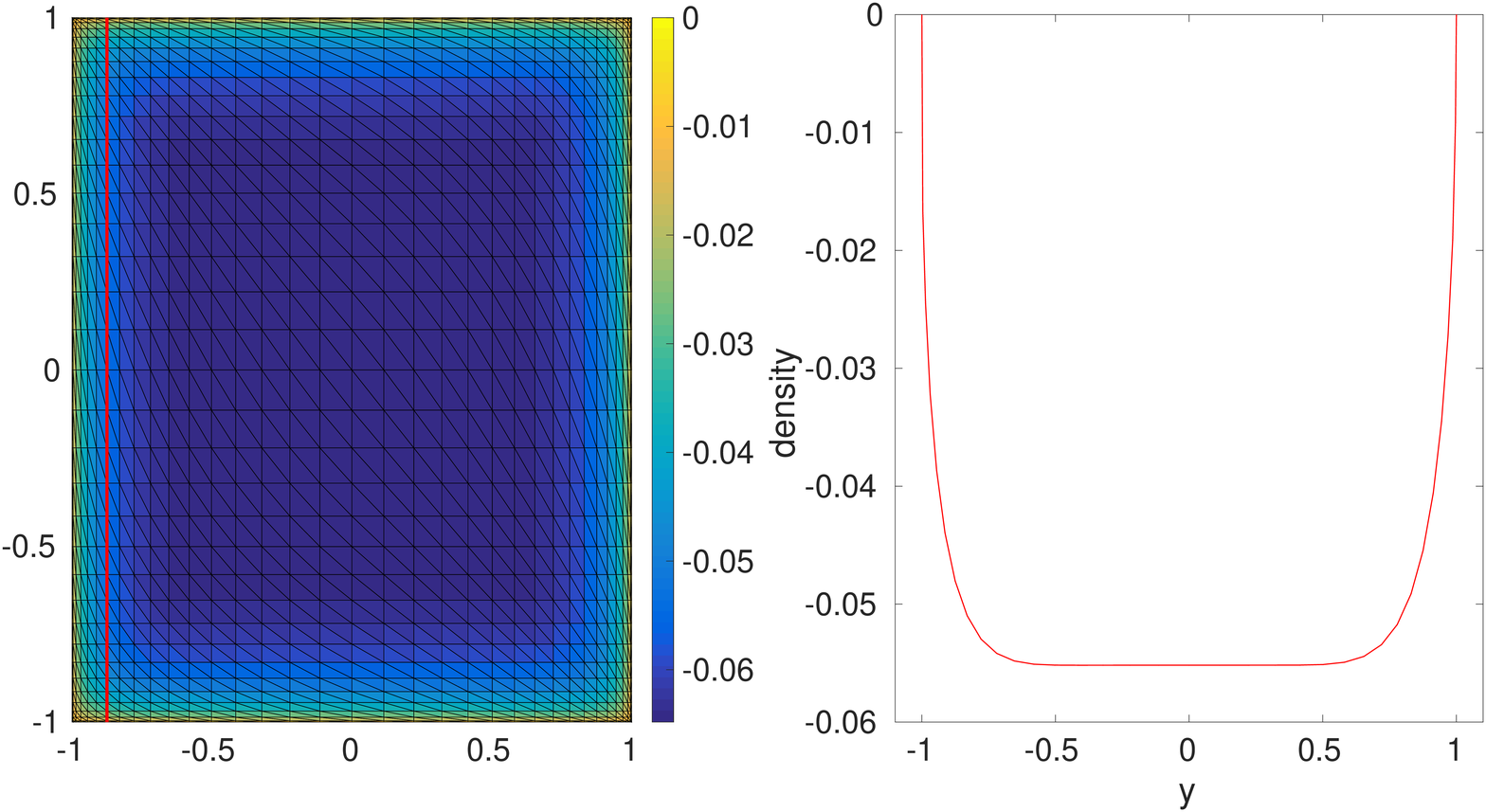}
 }
\caption{Solution of the Dirichlet-to-Neumann equation at $T=0.65$ along $x=-0.8754$ on the square screen, Example \ref{example2}}
\label{DNedge} }
\end{figure}
\begin{figure}[H] 
\makebox[\linewidth]{
\includegraphics[width=12cm]{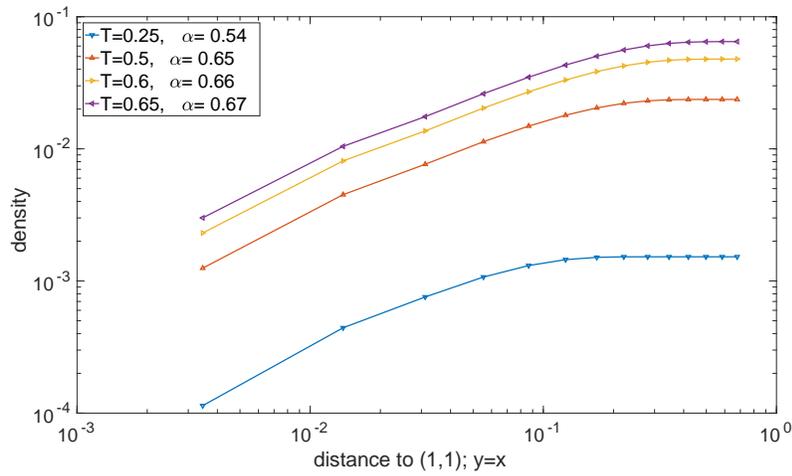} }
\caption{Asymptotic behavior of the solution to the Dirichlet-to-Neumann equation near corner along $y=x$, Example \ref{example2}}
\label{DNcornerexp}\end{figure}
\begin{figure}[H] 
\makebox[\linewidth]{
\includegraphics[width=12cm]{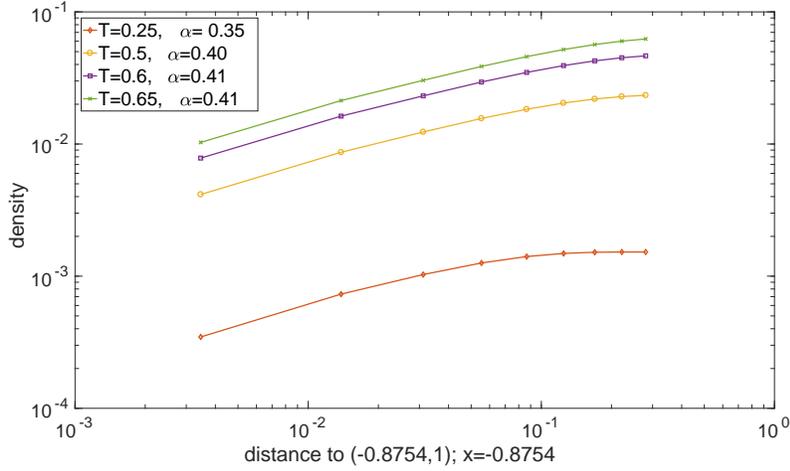} }
\caption{Asymptotic behavior of the solution to the Dirichlet-to-Neumann equation near edge along $x=-0.8754$, Example \ref{example2}}
\label{DNedgeexp}
\end{figure}
\begin{figure}[H]
 \makebox[\linewidth]{
 \includegraphics[width=12cm]{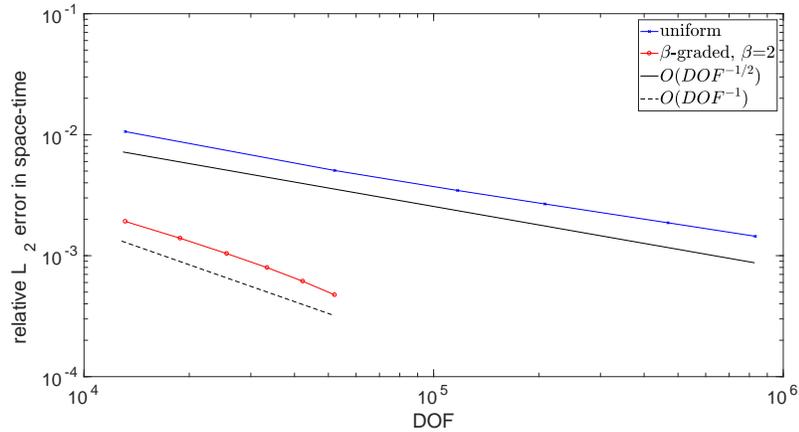}
 }
 \caption{Error in $L_2([0,T],L_2(\Gamma))$ norm for Dirichlet-to-Neumann equation on square screen, Example \ref{example2}}
 \label{l2steklov}
\end{figure}
Figure \ref{l2steklov} shows the error in $L^2([0,T] \times \Gamma)$ compared to the benchmark solution. 
The convergence in this norm is proportional to $\sim h^2$ (equivalently, $\sim DOF^{-1}$) on the 2-graded mesh, while the convergence is $\sim h^{1}$ ($\sim DOF^{-\frac{1}{2}}$) on a uniform mesh. This coincides with the rates expected from the approximation property of the graded, respectively uniform meshes, and it is also in agreement with the rates obtained for the hypersingular operator on the square screen in the previous section.

\section{Applications to traffic noise: Horn effect}

For applications in traffic noise, the natural (simplified) geometry is that of a half-space $\mathbb{R}^3_+$ with a tire, as displayed in Figure~\ref{fig:horn:geometry}. The horn like geometry between the tire and the street amplifies sound sources close to the contact patch, and it is of interest to compute the amplification for a broad band of frequencies. See also \cite{banz2, kropp200sound}. See \cite{w1, w2} for the complementary problem of the tire dynamics in contact with the road.  

We consider the wave equation for the sound pressure scattered by the tire, with homogeneous Neumann conditions on the street $\Gamma_\infty = \mathbb{R}^2 \times \{0\}$ and inhomogeneous Neumann conditions on the tire. Note that the boundary conditions jump in the cuspidal geometry between the tire and the road surface.  The relevant Green's function in $\mathbb{R}^3_+$ is given by
\begin{align}\label{greentraffic}
  G(t,x,y)=\frac{\delta(t-|x-y|)}{4\pi|x-y|}+ \frac{\delta(t-|x-y'|)}{4\pi|x-y'|}\ ,
\end{align}
where $y'$ is the reflection of $y$ on $\Gamma_\infty$. We use it in a single layer potential ansatz for a sound pressure scattered by the tire,
\begin{align}\label{phalf}
p(t,x)=\frac{1}{4\pi}\int_\Gamma \frac{\phi(t-|x-y|,y)}{|x-y|}\,ds_y +\frac{1}{4\pi}\int_\Gamma \frac{\phi(t-|x-y'|,y)}{|x-y'|}\,ds_y\ ,
\end{align}
with $\phi(s,y) = 0$ for $s\leq 0$. The Neumann problem for the scattered sound translates into an integral equation for $\phi$: 
\begin{align}\label{eq:neumann-direct}
 \left(-I+K^\prime\right)\phi(t,x) = 2\frac{\partial p}{\partial n}(t,x)= -2\frac{\partial p^I}{\partial n}(t,x)\ ,
\end{align}
with $p^I$ the incoming wave and the adjoint double layer  operator $K'$ from \eqref{operators},
\begin{align*}
 K'\phi(t,x)=& \frac{1}{2\pi}\int_\Gamma\frac{n_x^\top(y-x)}{|x-y|}\left(\frac{\phi(t-|x-y|,y)}{|x-y|^2}+\frac{\dot \phi(t-|x-y|,y)}{|x-y|}\right)\,ds_y\\
               &+\frac{1}{2\pi}\int_\Gamma\frac{n_x^\top(y'-x)}{|x-y'|}\left(\frac{\phi(t-|x-y'|,y)}{|x-y'|^2}+\frac{\dot \phi(t-|x-y'|,y)}{|x-y'|}\right)\,ds_y .
\end{align*} 
The weak formulation reads:\\
Find $\phi \in H^\frac{1}{2}_\sigma(\mathbb{R}^+, \widetilde{H}^{-\frac{1}{2}}(\Gamma))$ such that for all test functions $\psi \in H^\frac{1}{2}_\sigma(\mathbb{R}^+, H^{-\frac{1}{2}}(\Gamma))$
\begin{align}\label{eq:var1}
\int_{0}^\infty \int_{\Gamma} \left(-I+K'\right)\phi\ \psi\ \,ds_x\,d_\sigma t =-2\int_{0}^\infty \int_{\Gamma} \frac{\partial p^I}{\partial n}\ \psi\,ds_x\,d_\sigma t\ .
\end{align}
It is discretized with piecewise constant ansatz and test functions $\psi_i^h(x)  \gamma^n_{\Delta t}(t) \in V^{0,0}_{t,h}$  in space and time.\\

To obtain the sound amplification for the entire frequency spectrum in one time domain computation, we consider the sound emitted by a Dirac point source. It is located in the point  $y_{src}=(0.08,0,0)$ near the horn,
\begin{align}
  p^I=\frac{\delta(t-|x-y_{src}|)}{4\pi|x-y_{src}|}+ \frac{\delta(t-|x-y'_{src}|)}{4\pi|x-y'_{src}|}\ .
\end{align}
 The right hand side of the discretization of the  integral equation \eqref{eq:var1} is calculated to be \cite{banz2}
\begin{align*}
& -2\int_0^\infty \int_\Gamma \frac{\partial p^I}{\partial n} \psi_i^h  \gamma^n_{\Delta t} \,ds_x \,dt = - \int_{T_i \cap E(y_{src})} \frac{n_{x}^\top(y_{src}-x)}{\pi\arrowvert x-y_{src}\arrowvert^{3}} \,ds_x +  n_{x}^\top(y_{src}-x) \left\{ \frac{\zeta(t_{n-1})}{\pi t_{n-1}^{2}}-\frac{\zeta(t_{n})}{\pi t_{n}^{2}}\right\}\ .
 \end{align*}
 The first term is an integral over the domain of influence $E(y_{src}) {= \{x \in \Gamma : t_{n-1}\leq |x-y_{src}| \leq t_n\}}$ of $y_{src}$, {intersected with $T_i = \mathrm{supp}\ \psi_i^h$}, and it is computed in the same way as the entries of the Galerkin matrix. In the second term, $\zeta(t)$ denotes the length of the curve segment $T_{i} \cap \{\arrowvert x-y_{src}\arrowvert=t\}$ inside the triangle $T_i$. 

 \begin{figure}[tbp]
   \centering
   \includegraphics[trim = 0mm 0mm 0mm 0mm, clip,width=55.0mm, keepaspectratio]{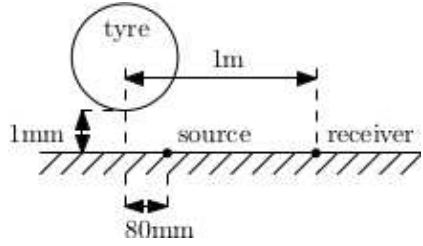}
   \caption{Cross section of geometrical setup for horn effect.}
   \label{fig:horn:geometry}
 \end{figure}

After solving the discretization of \eqref{eq:var1} for the density $\phi$, we obtain the sound pressure $p$  in the receiver point $x_{fp}=(1,0,0)$ from \eqref{phalf}. From \cite[Eq.~7]{kropp200sound}, the amplification factor is given by:
\begin{equation*}
 \Delta L_H(\omega)=20 \log_{10}\left(\frac{|\hat{p}(\omega,x_{fp})+\hat{p}^I(\omega,x_{fp})|}{|\hat{p}^I(\omega,x_{fp})|}\right)\ .
\end{equation*}
Here, $\hat{p}$ and $\hat{p}^I$ denote the Fourier transformed incident and scattered sound pressure fields. The Fourier transformation is calculated using a discrete FFT, where the time step size is the same as for the computation of the density.


In  the geometry given by Figure \ref{fig:horn:geometry}, we compute the sound amplification in standard units for a grown slick 205/55R16 tire at 2 bar pressure. It is subject to 3415N axle load at 50 km/h on a street with an ISO 10844 surface, and a mesh with 6027 nodes is depicted in Figure \ref{tyrepic}. We use this and a refined graded mesh and consider the sound amplification for frequencies between $200$ and $2000$ Hz. The total time interval is $T=24$ and the time step sizes $\Delta t =0.005,\ 0.01, \ 0.04$. For smaller time step sizes more reflections in the horn can be resolved, and these are responsible for the sound amplification.\\

We compare the results for the uniform mesh with a refined, graded-like mesh with grading parameter $\beta =2$, see Figure \ref{tyrepic}. Figure \ref{gradedhorn1} shows approximations of the amplification factor in the horn geometry, discretized using the graded mesh, across the frequency range for the time step sizes $\Delta t =0.005,\ 0.01, \ 0.04$.  We also show the approximation given by the uniform tire mesh for $\Delta t=0.005$. The figure, in particular, exhibits several resonances between $1000$ and $2000$ Hz, at which the different approximations lead to significant differences in the computed amplification factors.\\

 The differences between the computed amplification factors are depicted in Figure \ref{gradedhorn2}. The first subfigure considers the differences between the graded and uniform meshes for a given time step size, $\Delta t =0.005,\ 0.01, \ 0.04$. Outside the resonance frequencies the differences are negligible. Especially in the strong resonances around $1300$ and $1900$ Hz, however, the difference between graded and uniform meshes becomes more and more relevant for smaller $\Delta t$, as the small time step allows to resolve the reflections in the horn geometry more accurately. The second subfigure of Figure \ref{gradedhorn2} compares the computed amplification for graded meshes for different $\Delta t$. As before, the differences are mostly relevant near resonance frequencies, and the discretization error for a fixed mesh decreases with $\Delta t$. For $\Delta t=0.005$ the differences between the spatial, resp.~temporal discretizations in Figure \ref{gradedhorn2} are both around $6$ dB near $1300$ Hz. Such differences in sound pressure are significant to the human perception. They indicate the relevance of graded meshes for computations of traffic noise.\\

\begin{figure}[H]
   \centering

   \subfigure[]{
   \includegraphics[height=6cm]{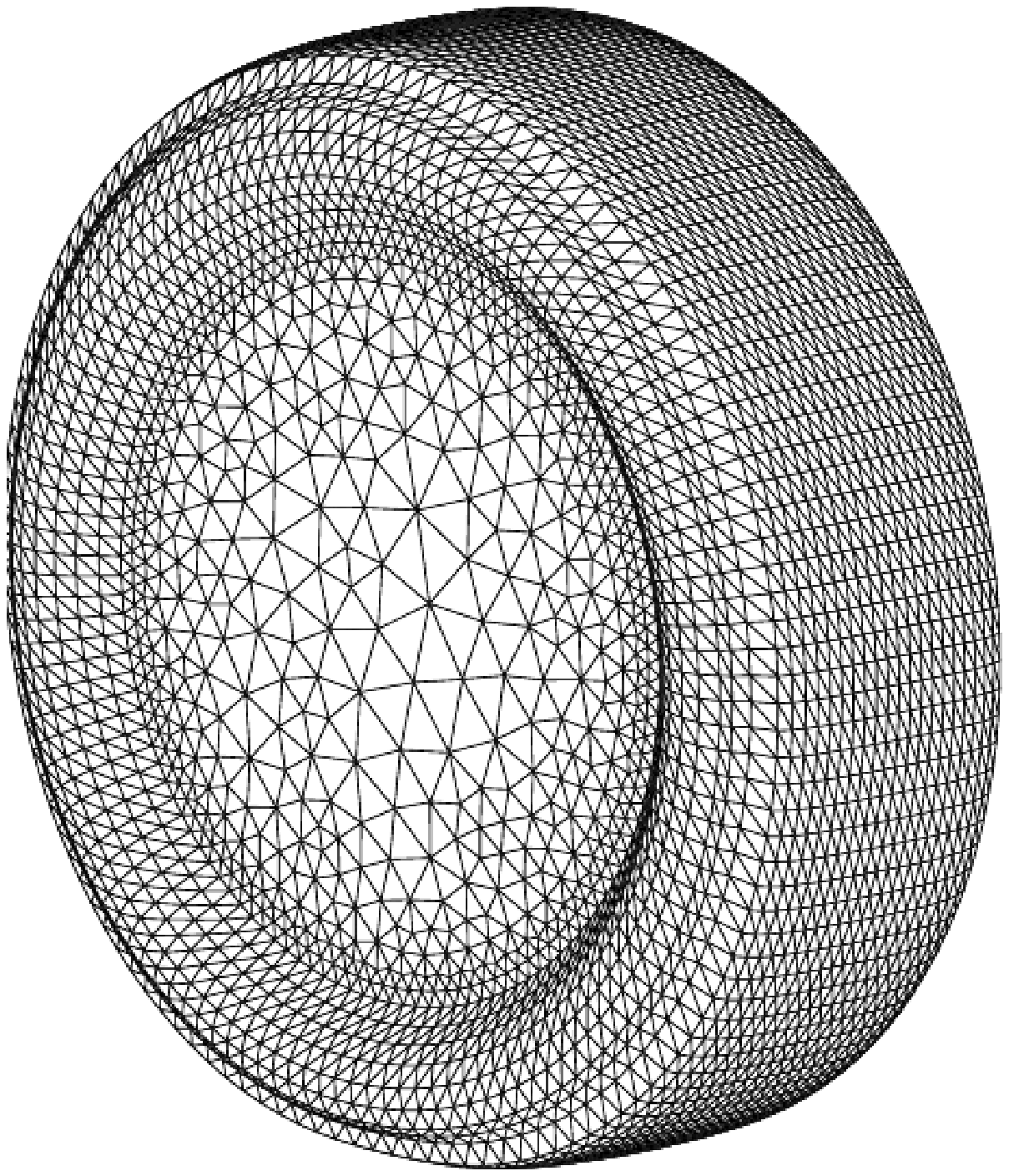}}
 \subfigure[]{
   \includegraphics[width=8cm]{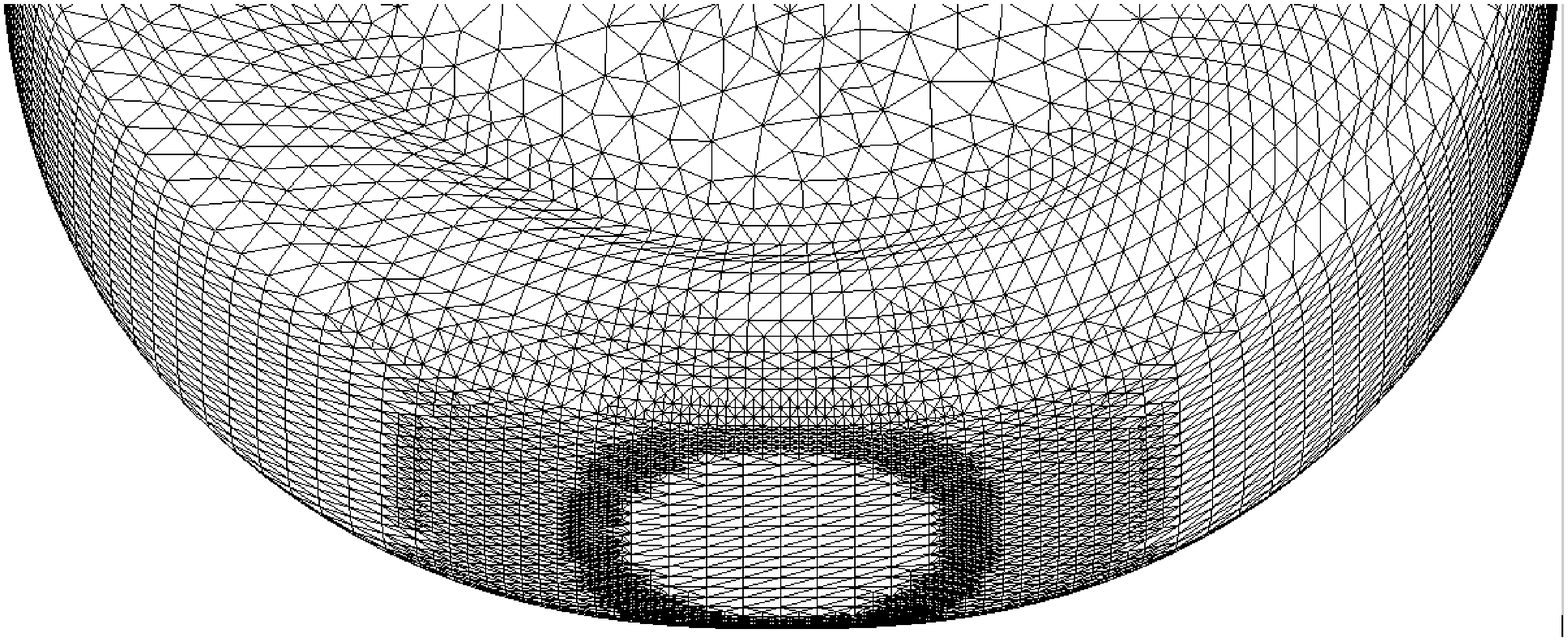}}
   \caption{Mesh of (a) slick 205/55R16 tire and (b) graded refinement.}
\label{tyrepic}
\end{figure}

\begin{figure}[H]
   \centering
   \includegraphics[height=6cm]{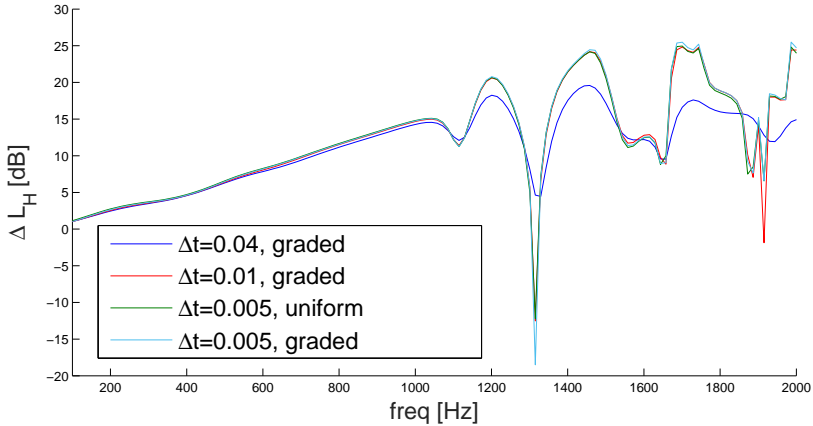}
   \caption{Amplification due to horn effect: Graded mesh approximations for different $\Delta t$, compared to a uniform mesh approximation.}
\label{gradedhorn1}
\end{figure}

\begin{figure}[H]
   \centering
   \includegraphics[height=7cm]{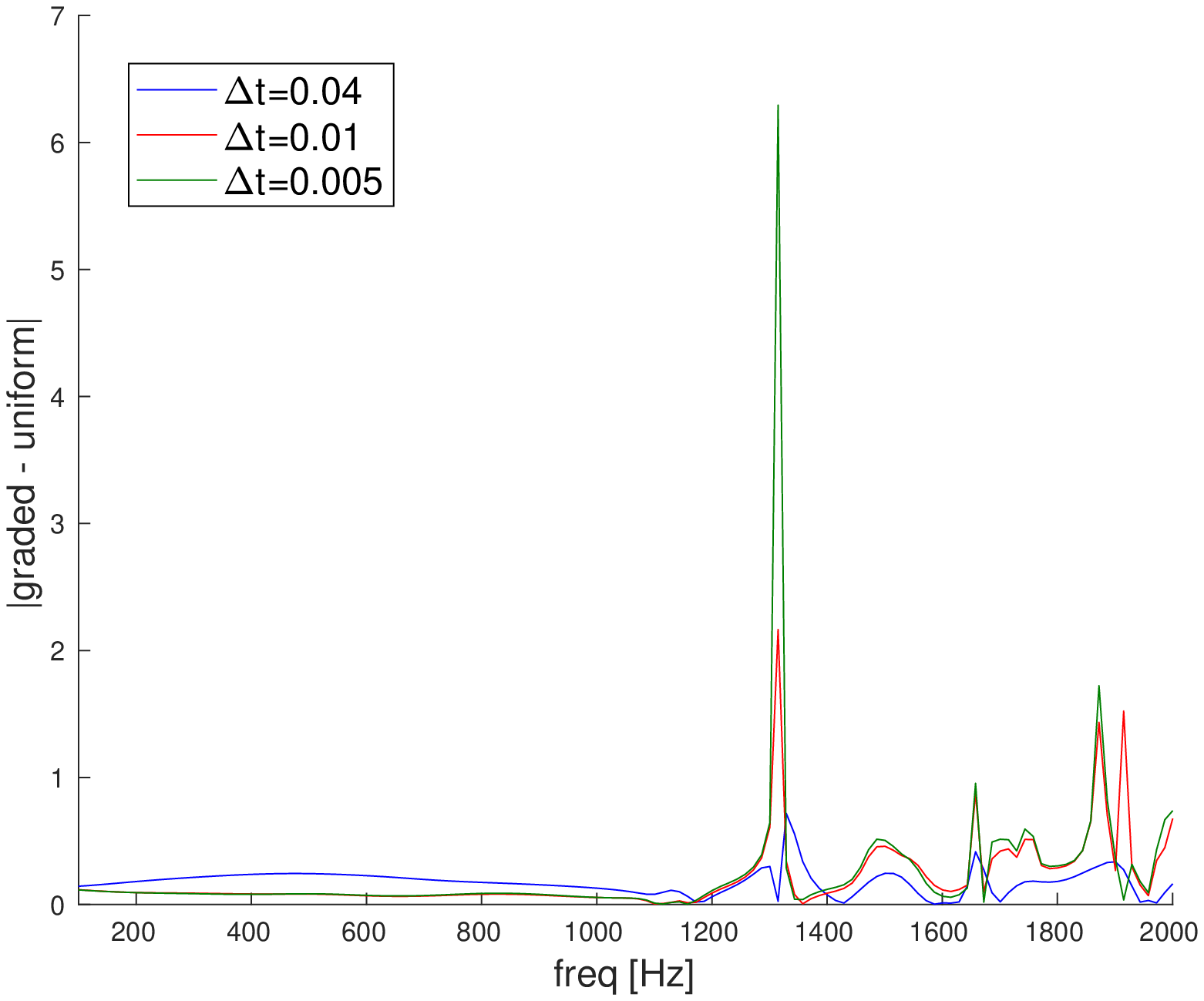}
   \includegraphics[height=7cm]{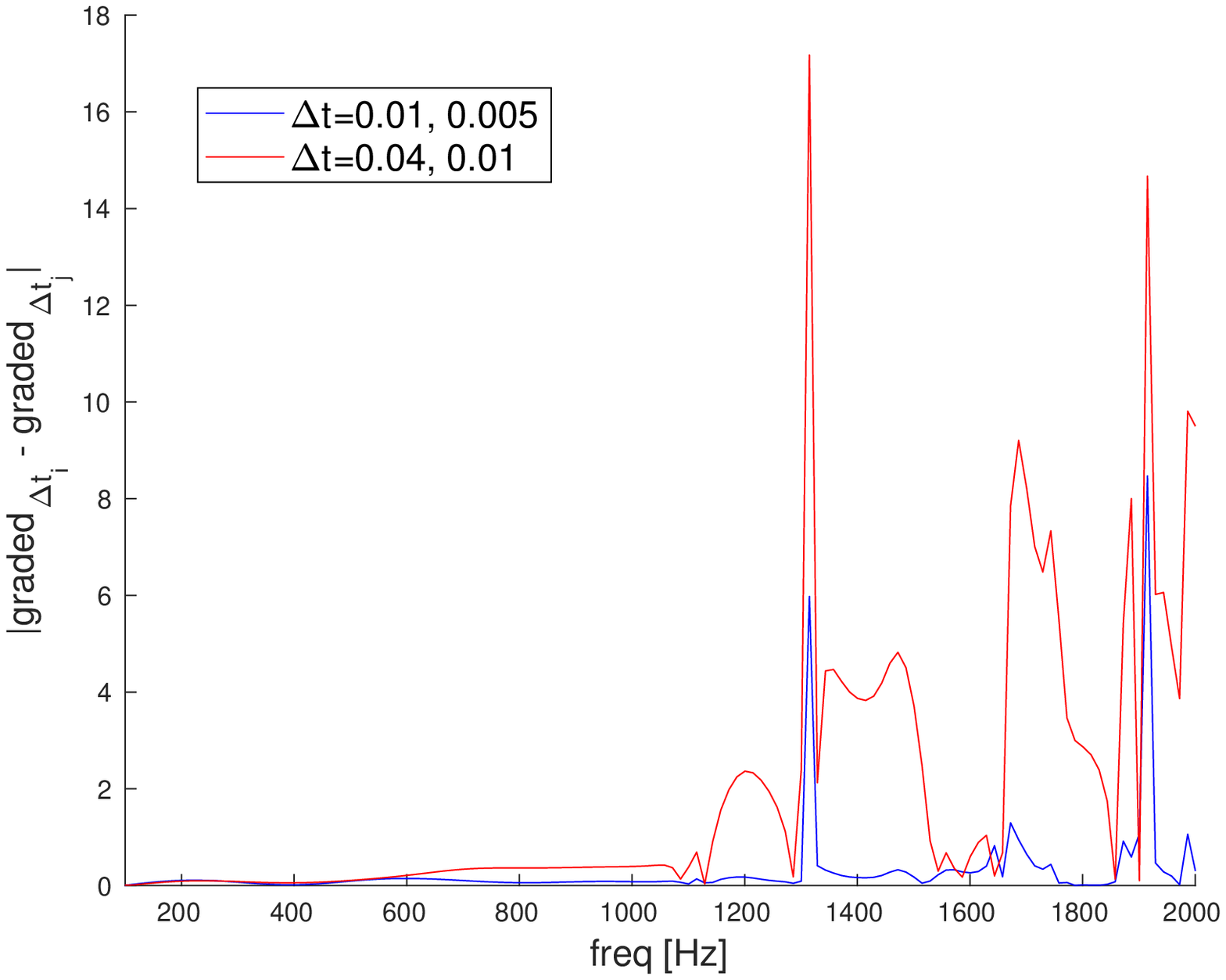}
   \caption{Differences of amplification factors in $dB$ between graded and uniform meshes for fixed $\Delta t$, resp.~between graded meshes for different $\Delta t$.}
\label{gradedhorn2}
\end{figure}

{
\section*{Acknowledgement}
We thank one of the referees for particularly helpful suggestions which improved the article.
}

  \bibliographystyle{abbrv}

\end{document}